\newcommand{\eps}{\varepsilon}
\newcommand{\La}{\Lambda}
\newcommand{\ds}{\displaystyle}
\def\BN{{\mathbb N}}
\def\BZ{{\mathbb Z}}
\def\BC{{\mathbb C}}
\def\CO{{\mathcal O}}
\def\D{{\mathcal D}}
\def\iff{\Leftrightarrow}
\newcommand{\cyc}[1]{\langle #1 \rangle}
\newcommand{\floor}[1]{\left\lfloor #1 \right\rfloor}
\newcommand{\ceil}[1]{\left\lceil #1 \right\rceil}
\newcommand{\ip}[2]{\langle#1,#2\rangle}
\DeclareMathOperator{\class}{class}
\newtheorem{thm}{Theorem}[section]
\newtheorem{df}[thm]{Definition}
\newtheorem{cor}[thm]{Corollary}
\newtheorem{prop}[thm]{Proposition}
\newtheorem{lem}[thm]{Lemma}
\numberwithin{equation}{thm}
\def\remark{\noindent\textbf{Remark:} \ }
\def\G{\BZ_{2^l}\rtimes D_k}
\title[Higher Indicators for the Doubles of some Totally Orthogonal Groups]
{Higher Indicators for the Doubles of some Totally Orthogonal Groups}
\author{Marc Keilberg}
\email{mkeilberg@ucsd.edu}
\begin{document}
\begin{abstract}
We investigate the indicators for certain groups of the form $\BZ_k\rtimes D_l$ and their doubles, where $D_l$ is the dihedral group of order $2l$.  We subsequently obtain an infinite family of totally orthogonal, completely real groups which are generated by involutions, and whose doubles admit modules with second indicator of -1.  This provides us with answers to several questions concerning the doubles of totally orthogonal finite groups.
\end{abstract}

\maketitle

\section*{Introduction}
Indicators of modules over Hopf algebras \cite{FGSV}\cite{LM}, especially their higher analogues \cite{KSZ2} considered herein, are proving to be a very useful invariant in the study of Hopf algebras.  For example, they have been used in classifying Hopf algebras themselves \cite{Ka}\cite{NS1}; in studying possible dimensions of the representations of $H$ \cite{KSZ1}; and in studying the prime divisors of the exponent of $H$ \cite{KSZ2}\cite{NS2}. Moreover, the indicator is invariant under equivalence of monoidal categories \cite{MaN}, and thus yield a gauge invariant of the category of finite dimensional representations. Another motivation comes from conformal field theory; see the work of Bantay \cite{B1}\cite{B2}.  The notion of higher indicators has also been extended to more general categories \cite{NS2}\cite{NS3}\cite{NS1}, where quasi-Hopf algebras play an important role \cite{N1}\cite{N2}.

It is well-known that the higher indicators for modules over groups are integers.  It was hoped the same would be true for the Drinfel'd double of a group.  The author proved this to be true for an infinite family of groups which includes the dihedral groups\cite{K}, and in \cite{Co} it was proven for symmetric groups acting on at most 10 elements.  More recently, \cite{IMM} showed it was true for a number of other groups, including every symmetric group, but also provided an example where the indicators are not integers.  The method used is nonconstructive, however, in that it only gives a sufficient condition to predict integrality, and provides no information on the specific values assumed beyond that.  Indeed, what connection there is between the structure of the group and the indicators of its double--which can be defined entirely in terms of the group, its subgroups, and their representations--remains a major question. For example, it is known that the second indicators determine when a module is self-dual, and if so if there is a symmetric or skew-symmetric invariant bilinear form \cite{LM}, but there is no known geometric interpretation for the higher indicators.

Determining if the indicators can take negative values or not remains a very difficult open question, even when considering just the group algebras \cite{Co}\cite{K}\cite{Sch}.  As such, it is crucial to pursue further examples in this direction.  In \cite{K} several examples were explicitly produced and calculated which had negative indicators in the double but none for the group.  Further similar examples will be computed in this paper, this time involving totally orthogonal groups.  Whether or not negative indicators exist for the doubles of $S_n$ for $n>10$ ($n\leq 10$ is known to be non-negative by \cite{Co}), or any of the classical reflection groups, remains an important open problem.

The groups we investigate are defined in Definition \ref{presentation}, and our results about them are summarized in Theorem \ref{doublesum}.  The procedure, notation, and organization are virtually identical to those in \cite{K}.  We work over the field $k=\BC$.  We will use $\wedge$ to denote logical conjunction, $\vee$ for logical disjunction, and $\oplus$ for exclusive disjunction in logical propositions.  Given a logical proposition $P$, we define $\delta_P$ to be 1 when $P$ is true, and otherwise 0.


\section{Preliminaries}\label{prelimsect}
We cite here only the bare essentials.  More comprehensive preliminary details can be found in \cite{K}.
\begin{df}\label{indicdef}
Given a simple $H$-module $V$ and its character $\chi$, we define the functions
$$\La^{[m]} = \sum \Lambda _1\Lambda _2\cdots\Lambda_m$$
$$\nu_m (\chi)=\chi \left( \La^{[m]}\right), m\in\BN$$
where  $\Lambda$ is the unique integral in $H$ with $\eps(\Lambda) = 1$.
\end{df}

Throughout, the indicator of a simple module is just the indicator of its irreducible
character.  When convenient, we will also use the representation corresponding to the associated character when denoting indicators of simple modules.

For any finite dimensional Hopf algebra $H$, there is an associated Hopf algebra $\D(H)$ known as the Drinfel'd double of $H$.  We give a cursory definition here that applies when $H$ is group algebra, and refer to \cite{Mo} for a more general and complete definition.

\begin{df}\label{doubledef}
  Let $G$ be a finite group.  We define a new hopf algebra $\D(G)$, the Drinfel'd double of $G$, in the following fashion.  As a vector space, $\D(G) = (G^{k})^{cop}\otimes_k kG$, where $G^k$ is the linear dual of the group algebra $kG$.  As a coalgebra, $\D(G)$ is just the usual tensor product of coalgebras.  To get the algebra structure, we use the $G$-action on $(G^{k})^{cop}$ coming from conjugation.
  
  As in \cite{Mo} we will write $h\bowtie f$, $h\in G, f\in (G^k)^{cop}$, for a typical element of $\D(G)$
\end{df}

We will need an explicit description for the irreducible modules of $\D(G)$ when $G$ is a finite group.  Our notation is the same as that used in \cite{AF}.

\begin{prop}\label{MOrho}\cite{DPR}\cite{Ma1}
    Let $G$ be a finite group.  The isomorphism classes of the irreducible $\D(G)$-modules are the modules $M(\CO,\rho)$ (defined below), where $\CO=\class(s)$ is the conjugacy class of some $s\in G$ and $\rho$ is (an isomorphism class of) an irreducible representation of $C_G(s)$ on a vector space $V$.  If we enumerate $\class(s) = \{t_1, ..., t_N\}$, where by convention we take $t_1=s$, and fix $g_i\in G$ with $t_1^{g_i}=t_i$ for $1\leq i\leq N$, then we can describe the module $M(\CO,\rho)$ in the following fashion:

    As a vector space $M(\CO,\rho) = \bigoplus_{i=1}^N g_i\otimes V$, or $N$ copies of $V$ indexed by the $g_i$.  We denote an element $g_i\otimes v, v\in V$ by $g_i v$.  For the left G-module structure, we define $g.g_iv = g_j(\gamma v)$, where $g_j$ and $\gamma$ are the (necessarily unique) elements with $g g_i= g_j\gamma$ in $G$ and $g_j\in\CO, \gamma\in C_G(s)$.  Here, $\gamma$ acts on $V$ via $\rho$.  For the left $G^{*}$ structure, we specify an equivalent left $G$-comodule structure $\delta$.  Specifically, we take $\delta(g_iv) = t_i\otimes g_iv$.  In particular, $M(\CO,\rho)$ can be graded by the elements of $\CO$.
\end{prop}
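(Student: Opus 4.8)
The plan is to first strip away the coalgebra data and recognize the algebra underlying $\D(G)$ as a crossed product, then run the Mackey/Clifford machine. From Definition \ref{doubledef}, $\D(G)$ is, as an algebra, the smash product $k^G\rtimes kG$ of the commutative algebra $k^G$ with $kG$ acting via (the dual of) conjugation. Writing $\{e_g:g\in G\}$ for the dual basis of $k^G$, we have $1=\sum_g e_g$ as a sum of orthogonal primitive idempotents, with $G$ permuting them by conjugation (up to the inversion/``cop'' convention, which I would fix once and keep). Since $k=\BC$, $\D(G)$ is semisimple — for instance because $k^G$ is semisimple and $|G|$ is invertible, so the crossed product is semisimple — hence it suffices to exhibit a complete, irredundant list of simple modules and identify it with the list of $M(\CO,\rho)$.

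Next I would take an arbitrary simple $\D(G)$-module $M$ and restrict it to $k^G$, which gives a grading $M=\bigoplus_g e_gM$; this is precisely the comodule structure $\delta$. The copy of $G$ inside $\D(G)$ moves $e_gM$ to $e_{hgh^{-1}}M$, so the support $\{g:e_gM\neq 0\}$ is a union of conjugacy classes, and simplicity forces it to be a single class $\CO=\class(s)$. Fixing $s$, a short computation shows the corner algebra $e_s\D(G)e_s$ is spanned by the elements $e_s h$ with $h$ in the stabilizer of $e_s$, i.e.\ $h\in C_G(s)$, and that $h\mapsto e_sh$ is an algebra isomorphism $e_s\D(G)e_s\cong \BC C_G(s)$ (this is where it matters that, for the ordinary double, no $2$-cocycle twist appears on $C_G(s)$, unlike $\D^\omega(G)$). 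Therefore $V:=e_sM$ is a $C_G(s)$-module, simple because $M$ is, and this is the irreducible representation $\rho$.

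Conversely, from a pair $(\CO,\rho)$ one reconstructs $M$ as the induced module $\D(G)e_s\otimes_{e_s\D(G)e_s}V$. Choosing $g_i\in G$ with $s^{g_i}=t_i$, the elements $g_ie_s=e_{t_i}g_i$ form a basis of $\D(G)e_s$ as a free right $\BC C_G(s)$-module of rank $N$, so $M=\bigoplus_i g_ie_s\otimes V$; writing $g_iv$ for $g_ie_s\otimes v$, the left $G$-action reads $g\cdot g_iv=g_j(\gamma v)$ whenever $gg_i=g_j\gamma$ with $g_j$ a transversal representative and $\gamma\in C_G(s)$, while $e_{t_i}$ acts as the identity on the $i$-th summand, giving $\delta(g_iv)=t_i\otimes g_iv$. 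These are exactly the formulas in the statement. Simplicity of $M(\CO,\rho)$ is immediate from this description: any nonzero submodule is $k^G$-stable, hence graded, and $G$-transitivity on $\CO$ together with simplicity of $V$ over $C_G(s)$ forces it to be all of $M(\CO,\rho)$; and distinct pairs yield non-isomorphic modules, since $\CO$ is recovered as the support of the grading and $\rho$ as the $C_G(s)$-action on the degree-$s$ component.

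Finally I would verify completeness by a dimension count. Since $\dim M(\CO,\rho)=|\CO|\dim\rho$,
\[ \sum_{(\CO,\rho)}\bigl(\dim M(\CO,\rho)\bigr)^2=\sum_{\CO}|\CO|^2\!\!\sum_{\rho\in\mathrm{Irr}(C_G(s))}\!\!(\dim\rho)^2=\sum_{\CO}|\CO|^2\,|C_G(s)|=|G|\sum_{\CO}|\CO|=|G|^2=\dim\D(G), \]
so by semisimplicity the modules $M(\CO,\rho)$ account for every simple $\D(G)$-module. The only genuinely delicate point — and the step I would handle most carefully — is the bookkeeping of the conjugation and ``cop'' conventions, so that the corner algebra comes out as $\BC C_G(s)$ with no cocycle and the induced-module formulas match the stated ones on the nose; the rest is the standard Mackey analysis of a crossed product of a semisimple commutative algebra by a finite group.
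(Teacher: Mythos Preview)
Your argument is correct and is the standard Mackey/Clifford analysis of the crossed product $k^G\rtimes kG$; however, the paper itself gives no proof of this proposition, instead citing \cite{DPR} and \cite{Ma1} for the result. What you have written is essentially the proof one finds in those references (modulo packaging), so there is nothing to compare against internally --- your proposal simply supplies the omitted argument, and does so soundly, including the dimension count for completeness and the observation that no $2$-cocycle twist appears on $C_G(s)$ in the untwisted double.
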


The specific choices of $s$ (the representative of $\CO$), the isomorphism class representative $\rho$ and corresponding vector space $V$, the enumeration of $\CO$ and the choice of the $g_i$ are not crucial.  They will all yield isomorphic $\D(G)$-modules.  In particular, we are free in the subsequent to fix these choices as suits the situation, and we will do so without further comment.

In order to study the indicators of $G$ and $\D(G)$, for $G$ a finite group, we need to introduce a few more pieces of notation.

\begin{df}\label{Gmsetdef}
Let $G$ be a finite group.  For any $x,y\in G$ and $m\in \BN$, define
\begin{eqnarray*}
    G_m(x) &=& \left\{ a\in G : \prod_{j=0}^{m-1} a^{-j}xa^j = 1\right\}\\
    G_m(x,y) &=& \left\{ a\in G : \prod_{j=0}^{m-1} a^{-j}x a^j = 1 \mbox{ and } x^m=y\right\}\\
    z_m(x,y) &=& |G_m(x,y)|.
\end{eqnarray*}
\end{df}
In the notation of \cite{KSZ2}, taking $F=G$ our $G_m(x,y)$ and $z_m(x,y)$ are precisely $G_{m,1}(x,y)$ and $z_{m,1}(x,y)$, respectively.

\begin{thm}\label{sweedpow}\cite{KSZ2}
Let $G$ be a finite group and let $\La$ be the integral of $\D(G)$.  Then
\begin{eqnarray*}
    \La^{[m]} &=& \frac{1}{|G|} \sum_{g,y\in G} z_m(g,y) p_g\bowtie y\\
    &=& \frac{1}{|G|} \sum_{g\in G, \ a\in G_m(g) } p_g\bowtie a^m
\end{eqnarray*}
\end{thm}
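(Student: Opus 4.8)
The plan is to compute $\La^{[m]}$ directly from the definition in Definition \ref{indicdef}, using the explicit formula for the normalized integral $\La$ of $\D(G)$. First I would recall that for a group algebra $kG$ the double $\D(G)$ has normalized integral $\La = \frac{1}{|G|}\sum_{g\in G} p_1 \bowtie g$, where $p_1$ is the dual basis element dual to the identity of $G$; this is the standard fact and is the only input about $\D(G)$ we really need. The multiplication in $\D(G)$ on the basis elements $p_g\bowtie h$ is $(p_g\bowtie h)(p_{g'}\bowtie h') = \delta_{g,\, h g' h^{-1}}\, p_g\bowtie hh'$ (reading off from the conjugation action in Definition \ref{doubledef}), and the comultiplication is $\Delta(p_g\bowtie h) = \sum_{g' g'' = g} (p_{g'}\bowtie h)\ot(p_{g''}\bowtie h)$ since $(G^k)^{cop}$ is the opposite coalgebra of $G^k$ and $kG$ is cocommutative. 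These two formulas, iterated, are the entire engine of the proof.

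Next I would expand $\La^{[m]} = \sum \La_1\La_2\cdots\La_m$, the $m$-fold "Sweedler power," by applying the iterated coproduct to $\La$ and then multiplying the tensor factors back together. Writing $\La = \frac{1}{|G|}\sum_{a\in G} p_1\bowtie a$, the iterated coproduct sends $p_1\bowtie a$ to a sum over factorizations $1 = b_1 b_2\cdots b_m$ of $\bigotimes_{j} (p_{b_j}\bowtie a)$; multiplying these out in order using the product rule forces $b_1 = a b_2 a^{-1}\cdot a b_3 a^{-1}\cdots$, i.e. the delta conditions collapse successive factors and one is left with the single constraint $\prod_{j=0}^{m-1} a^{-j} b\, a^{j} = 1$ after reindexing, with the surviving group-algebra part being $a^m$ and the surviving dual part $p_b$ for the appropriate $b$. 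Carefully bookkeeping the indices, this yields $\La^{[m]} = \frac{1}{|G|}\sum_{g\in G,\ a\in G_m(g)} p_g\bowtie a^m$, which is the second displayed formula. The first displayed formula then follows by regrouping the sum over $a\in G_m(g)$ according to the value $y = a^m$: for fixed $g,y$ the number of $a$ contributing is exactly $z_m(g,y) = |G_m(g,y)|$ by Definition \ref{Gmsetdef}, giving $\frac{1}{|G|}\sum_{g,y} z_m(g,y)\, p_g\bowtie y$.

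The main obstacle is purely combinatorial bookkeeping: tracking the $m-1$ free summation indices $b_2,\dots,b_m$ produced by the iterated coproduct, seeing that the product rule's Kronecker deltas pin down all but the conjugation-type relation $\prod_{j=0}^{m-1}a^{-j}b a^{j}=1$, and checking that the leftover dual-side index matches the set $G_m(g)$ as defined. I would handle this by induction on $m$: assuming the formula for $\La^{[m-1]}$, multiply on the right by one more copy of $\La$, use the product rule once, and verify that the single new delta condition $\delta_{g,\, a^{m-1} b (a^{m-1})^{-1}}$ together with the inductive constraint is equivalent to membership in $G_m(g)$ and that the group part becomes $a^{m-1}\cdot a = a^m$. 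One should also note the remark immediately following Definition \ref{Gmsetdef} identifying $G_m(x,y)$ with the $F=G$ case of the sets in \cite{KSZ2}, so that this theorem is genuinely just the specialization of the general Sweedler-power formula there; if one prefers, the cleanest route is simply to cite \cite{KSZ2} for the general $\La^{[m]}$ over $\D(kF\rtimes\, \cdot)$-type algebras and substitute $F=G$, but I would include the direct computation above for self-containedness.
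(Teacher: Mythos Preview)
The paper does not give its own proof of this theorem: it is quoted verbatim from \cite{KSZ2} (note the citation in the theorem header) and is used as a black box. Your direct computation from the integral and the explicit product/coproduct on $\D(G)$ is the natural way to establish it and is essentially what \cite{KSZ2} does; the outline is correct, with the only delicate point being to match the ordering convention in $\prod_{j=0}^{m-1}a^{-j}xa^j$ against the $(G^k)^{cop}$ convention fixed in Definition~\ref{doubledef} when you collapse the iterated coproduct, so that the surviving constraint really is membership in $G_m(g)$ rather than in $G_m(g^{-1})$ or a reversed-product variant.
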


\begin{cor}\label{indicforms}\label{cor34} \cite{KSZ2}
Let $V=M(\class(s),\rho)$ be an irreducible $\D(G)$-module, as defined in Proposition \ref{MOrho}.  In particular, assume that $\rho$ is an irreducible representation of $C_G(s)$.  Let $\chi$ be the character of $V$, and $\eta$ the character of $\rho$.  Then
    \begin{eqnarray*}
      \nu_m(\chi) &=& \frac{1}{|C_G(s)|} \sum_{y\in C_G(s)} z_m(s,y)\eta(y)\\
      &=& \frac{1}{|G|} \sum_{g\in G, \ a\in G_m(g) } \chi(p_g\bowtie a^m)\\
      &=& \nu_m(\chi) = \frac{1}{|G|}\sum_{g\in\CO_s, \ a\in G_m(g)} \chi(p_g\bowtie a^m)
    \end{eqnarray*}
\end{cor}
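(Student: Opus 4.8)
The plan is to extract all three formulas from Theorem \ref{sweedpow} combined with the explicit model of $M(\class(s),\rho)$ in Proposition \ref{MOrho}; the only substantive work is a single trace computation together with some bookkeeping over conjugation, so I would establish the formulas in a convenient logical order rather than the order in which they are stated.

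First, by Definition \ref{indicdef} one has $\nu_m(\chi)=\chi(\La^{[m]})$. Applying $\chi$ to the second expression for $\La^{[m]}$ in Theorem \ref{sweedpow} and using linearity yields the second displayed formula immediately. To pass to the third, I would note that $p_g\bowtie y$ acts on $M(\class(s),\rho)$ by first acting through the group element $y$ and then projecting onto the $g$-graded component; since the comodule structure $\delta(g_iv)=t_i\ot g_iv$ grades $M(\class(s),\rho)$ by the conjugacy class $\class(s)$, that projection is identically zero whenever $g\notin\class(s)$. Hence $\chi(p_g\bowtie y)=0$ for such $g$, and the sum over $g\in G$ in the second formula collapses to the sum over $g\in\CO_s$ in the third.

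For the first formula I would instead apply $\chi$ to the first expression for $\La^{[m]}$, obtaining $\nu_m(\chi)=\tfrac{1}{|G|}\sum_{g,y\in G}z_m(g,y)\,\chi(p_g\bowtie y)$, and discard the terms with $g\notin\class(s)$ exactly as above. Enumerating $\class(s)=\{t_1,\dots,t_N\}$ with conjugating elements $g_i$ as in Proposition \ref{MOrho}, I would compute $\chi(p_{t_i}\bowtie y)$ directly from the module structure: the element $y$ sends $g_iv$ back into the $g_i$-graded subspace precisely when $yg_i=g_i\gamma$ for some $\gamma\in C_G(s)$, and then $p_{t_i}$ fixes the result, so $\chi(p_{t_i}\bowtie y)=\eta(g_i^{-1}yg_i)$ when $g_i^{-1}yg_i\in C_G(s)$ and $0$ otherwise. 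Substituting $y=g_iy'g_i^{-1}$ with $y'$ running over $C_G(s)$, and checking that conjugation by $g_i$ carries $G_m(s,y')$ bijectively onto $G_m(t_i,g_iy'g_i^{-1})$ --- so that $z_m(t_i,g_iy'g_i^{-1})=z_m(s,y')$ --- reduces the inner sum over $y$ to $\sum_{y'\in C_G(s)}z_m(s,y')\eta(y')$, a quantity independent of $i$. Summing over the $N=|G|/|C_G(s)|$ indices and cancelling then produces the first formula.

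The one genuinely delicate point is this trace computation together with the accompanying change of variables: one must keep the conventions of Proposition \ref{MOrho} straight --- which element is the grade, and on which side $g_i$ conjugates --- and verify that the defining relations of $G_m(\cdot)$ and $G_m(\cdot,\cdot)$ are preserved under simultaneous conjugation of $x$, $y$, and the running index $a$. Everything else is linearity and an orbit-counting cancellation.
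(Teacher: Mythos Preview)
Your argument is correct. The paper itself does not prove this corollary --- it is simply quoted from \cite{KSZ2} --- so there is no in-paper proof to compare against; your derivation (applying $\chi$ to both expressions for $\La^{[m]}$ in Theorem~\ref{sweedpow}, using the $\CO_s$-grading of $M(\class(s),\rho)$ to discard the $g\notin\CO_s$ terms, and the trace computation plus conjugation change of variables for the first formula) is the standard route and is sound, with the convention caveat you already flag being the only place one must be careful.
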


Although the first equality is much more compact, for our purposes we will find the third a little bit easier to calculate with, since we can separate the two conditions $a\in G_m(g)$ and $a^m=y$ in the definition of $G_m(g,y)$.

Computing the indicators of modules associated to central elements is readily achieved by the following result.

\begin{lem} \label{FxF}\cite{K} Let $G$ be a finite group and suppose $x\in Z(G)$.  Let $V=M(x,\rho)$ be any corresponding irreducible $\D(G)$-module, as given in Proposition \ref{MOrho}.  Let $\chi$ be the character of $V$ and $\eta$ the character of $\rho$.
\begin{enumerate}
    \item Assume that $x^m=1$. Then the value $\nu_m(\chi)$ is
    exactly the same as the value of $\nu_m$ for $\rho$ (a $G$-module).  As a slight abuse of notation, we write $\nu_m(\chi) = \nu_m(\eta)$.
    \item Assume that $x^m\neq 1$. Then $\nu_m(\chi)=0$
\end{enumerate}
\end{lem}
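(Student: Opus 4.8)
The plan is to specialize Corollary \ref{indicforms} to a central element, where the product that defines $G_m(x)$ collapses completely. First I would record the structural simplifications coming from $x\in Z(G)$: one has $C_G(x)=G$ and $\class(x)=\{x\}$, so in Proposition \ref{MOrho} we may take $N=1$, $t_1=s=x$ and $g_1=1$. Then $M(x,\rho)$ is a single copy of $V$ on which $G$ acts through $\rho$, and the comodule structure is $\delta(v)=x\ot v$; hence $p_g$ acts as $\delta_{g,x}\,\id_V$ and, for every $a\in G$,
$$\chi(p_g\bowtie a^m)=\delta_{g,x}\,\eta(a^m).$$
Moreover, since each conjugate $a^{-j}xa^j$ equals $x$, the defining product is simply
$$\prod_{j=0}^{m-1}a^{-j}xa^j=x^m,$$
so the condition $a\in G_m(x)$ is equivalent to $x^m=1$ and imposes no restriction on $a$. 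Consequently $G_m(x)=G$ when $x^m=1$ and $G_m(x)=\emptyset$ when $x^m\neq 1$.

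For part (ii), suppose $x^m\neq 1$. Then $G_m(x)=\emptyset$, and since $G_m(x,y)\subseteq G_m(x)$ we get $z_m(x,y)=0$ for every $y$. Either the first formula of Corollary \ref{indicforms}, or its third formula (whose index set $\{a\in G_m(x)\}$ is now empty), gives $\nu_m(\chi)=0$ at once.

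For part (i), suppose $x^m=1$, so $G_m(x)=G$. Substituting the identity $\chi(p_x\bowtie a^m)=\eta(a^m)$ into the third expression of Corollary \ref{indicforms} (recalling $\CO_s=\{x\}$),
$$\nu_m(\chi)=\frac{1}{|G|}\sum_{a\in G_m(x)}\chi(p_x\bowtie a^m)=\frac{1}{|G|}\sum_{a\in G}\eta(a^m).$$
The right-hand side is exactly the value of $\nu_m$ for $\rho$ viewed as a $kG$-module, obtained by applying Definition \ref{indicdef} to $H=kG$ (here $\La^{[m]}$ for $kG$ is $\frac{1}{|G|}\sum_{a\in G}a^m$), so $\nu_m(\chi)=\nu_m(\eta)$ as claimed.

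I do not expect a genuine obstacle. The one place that needs care is reading off from Proposition \ref{MOrho} that $M(x,\rho)$ is concentrated in comodule-degree $x$ with $G$ acting by $\rho$, so that $\chi(p_g\bowtie a^m)=\delta_{g,x}\eta(a^m)$; granting that, both parts follow from the single observation that centrality of $x$ decouples membership in $G_m(x)$ from the element $a$.
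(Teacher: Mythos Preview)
Your proof is correct. The paper does not actually supply a proof of this lemma---it is quoted from \cite{K}---so there is nothing to compare against; your argument via the collapse of $G_m(x)$ for central $x$ and the third formula of Corollary \ref{indicforms} is exactly the natural route and is complete as written.
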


\section{Groups of the form $\G$}\label{grpdefsect}

Our goal for the rest of the paper will be to analyze the indicators and other properties of certain totally orthogonal finite groups and their doubles.

\begin{df}\label{presentation}
Let $k,l\in\BN$ with $l\geq 3$ and $4\mid k$.  Let $D_k$ denote the dihedral group of order $2k$, and set $n_1 = 2^{l-1}+1$, $n_2=2^{l-1}-1$.  For any such pair, we consider the groups $\BZ_{2^l}\rtimes D_k$ given by the presentation
\begin{eqnarray*}
  \BZ_{2^l}\rtimes D_k = \cyc{a,u,v \ | \ a^{2^l}=u^k=v^2=1, uau^{-1}=a^{n_1}, vav = a^{n_2}, vuv=u^{-1}}.
\end{eqnarray*}
\end{df}
\remark There are other possible actions of $D_k$ on $\BZ_{2^l}$.  Our particular choice of action, as well as the particular restriction on $l$ and the order of the cyclic group, are a matter of convenience to obtain the desired examples.  The precise structure of a more generic group $\BZ_l\rtimes D_k$ varies significantly with different choices.  We will see later that the particular groups we consider are all totally orthogonal, amongst other things, and furthermore their doubles have indicators which are negative (see Theorem \ref{doublesum}).  The particular example of $\BZ_8\rtimes D_4$, with presentation as above, was previously known to D. Naidu (private communication).\newline

\indent We start by stating some of the key identities that hold in such a group.  These are easy consequences of the definition.
\begin{cor}\label{basicident}
Let $G=\G$ and $n_1,n_2$ be as in Definition \ref{presentation}.  Then we have the following.
\begin{enumerate}
  \item $n_1^2\equiv n_2^2\equiv 1\bmod 2^l$
  \item $n_1\equiv -n_2\bmod 2^l$.  In particular, $n_1 n_2\equiv -1\bmod 2^l$.
  \item $a^{n_1 i}=a^i \iff 2\mid i$
  \item $a^{n_1 i}=a^{i+2^{l-1}} \iff 2\nmid i$
  \item $a^{n_2 i}=a^{-i} \iff 2\mid i$
  \item $a^{n_2 i}=a^{2^{l-1}-i} \iff 2\nmid i$
  \item $$(a^s u^i v)^2 = \left\{ \begin{array}{cll}
    a^{2^{l-1}} &;& 2\mid i \wedge 2\nmid s\\
    1 &;& 2\nmid i \vee 2\mid s \end{array}\right.$$
\end{enumerate}
\end{cor}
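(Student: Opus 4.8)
The plan is to verify each of the seven congruences and identities directly from the defining relations $a^{2^l}=u^k=v^2=1$, $uau^{-1}=a^{n_1}$, $vav=a^{n_2}$, $vuv=u^{-1}$, together with the explicit values $n_1 = 2^{l-1}+1$ and $n_2 = 2^{l-1}-1$. Items (1) and (2) are pure modular arithmetic: writing $n_1 = 2^{l-1}+1$ gives $n_1^2 = 2^{2l-2} + 2^l + 1 \equiv 1 \bmod 2^l$ since $2l-2 \ge l$ (using $l \ge 3$, in fact $l\ge 2$ suffices), and similarly for $n_2$; moreover $n_1 + n_2 = 2^l \equiv 0$, so $n_1 \equiv -n_2$, whence $n_1 n_2 \equiv -n_2^2 \equiv -1$. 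These also confirm that the conjugation formulas are consistent (i.e. that $u$ and $v$ act as order-dividing-appropriate automorphisms), though that consistency is already implicit in the group being well-defined.

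Items (3)--(6) follow from (1)--(2) by reducing mod $2^l$. For (3): $a^{n_1 i} = a^i \iff n_1 i \equiv i \iff (n_1-1)i = 2^{l-1} i \equiv 0 \bmod 2^l \iff 2 \mid i$. For (4): when $2 \nmid i$, $2^{l-1} i \equiv 2^{l-1} \bmod 2^l$, so $a^{n_1 i} = a^{i + 2^{l-1}}$; and conversely if $a^{n_1 i} = a^{i+2^{l-1}}$ then $2^{l-1} i \equiv 2^{l-1}$, forcing $i$ odd. Items (5) and (6) are the same computation with $n_2 = 2^{l-1} - 1$ in place of $n_1$, using $n_2 i \equiv -i + 2^{l-1} i$. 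None of this requires anything beyond careful bookkeeping mod $2^l$.

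The only item with real content is (7), the squaring formula for $a^s u^i v$. First I would record how $v$ and $u^i v$ conjugate powers of $a$. From $vav = a^{n_2}$ we get $v a^s v = a^{n_2 s}$; and since $uau^{-1} = a^{n_1}$ gives $u^i a u^{-i} = a^{n_1^i}$, and $n_1^i \equiv n_1 \bmod 2^l$ for $i$ odd while $n_1^i \equiv 1$ for $i$ even (by (1)), one finds $(u^i v) a^s (u^i v)^{-1} = u^i a^{n_2 s} u^{-i} = a^{n_1^i n_2 s}$. Now expand
\begin{eqnarray*}
(a^s u^i v)^2 &=& a^s (u^i v) a^s (u^i v) \\
&=& a^s \big( (u^i v) a^s (u^i v)^{-1} \big) (u^i v)^2 \\
&=& a^{s + n_1^i n_2 s} (u^i v)^2.
\end{eqnarray*}
Next, $(u^i v)^2 = u^i (v u^i v) = u^i u^{-i} = 1$ using $vuv = u^{-1}$, so the second factor vanishes and $(a^s u^i v)^2 = a^{s(1 + n_1^i n_2)}$. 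It remains to evaluate $s(1 + n_1^i n_2) \bmod 2^l$ in the two cases. If $i$ is even, $n_1^i n_2 \equiv n_2 \equiv -1$, so the exponent is $s(1-1) = 0$ and the square is $1$. If $i$ is odd, $n_1^i n_2 \equiv n_1 n_2 \equiv -1 \bmod 2^l$ as well by (2)(!), so naively we again get $0$ --- which would contradict the claimed answer. The resolution, and the point requiring care, is that here one must work mod $2^{l+1}$ (or track the exact integer), not mod $2^l$: the exponent of $a$ only matters mod $2^l$, but $1 + n_1^i n_2$ as an integer is $1 + (2^{l-1}+1)(2^{l-1}-1) = 2^{2l-2}$, so $s(1+n_1 n_2) = s\,2^{2l-2} \equiv s\,2^{l-1}\cdot 2^{l-1}$; since $2l-2 \ge l$ exactly when $l \ge 2$, this is $\equiv 0 \bmod 2^l$ --- so in fact the square is $1$ in the odd-$i$ case too, unless I instead use $n_1^i$ with $i$ odd which is literally $n_1$ only after reduction. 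The actual subtlety is that $vuv=u^{-1}$ forces a careful recomputation of $(u^iv)a^s(u^iv)^{-1}$: one should conjugate $a^s$ by $v$ first then by $u^i$, i.e. $(u^i v)a^s(u^iv)^{-1} = u^i(v a^s v)u^{-i} = u^i a^{n_2 s}u^{-i} = a^{n_1^i n_2 s}$, and crucially $n_1^i n_2 s$ must be handled via (3)--(6): when $s$ is odd and $i$ even, $a^{n_2 s} = a^{2^{l-1}-s}$ and then $a^s \cdot a^{2^{l-1}-s} = a^{2^{l-1}}$. So the correct case split is driven by the parity of $s$ feeding through (5)/(6) and the parity of $i$ feeding through whether $n_1^i$ is $1$ or effectively shifts by $2^{l-1}$. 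The main obstacle is precisely this: keeping straight which reductions happen mod $2^l$ versus which intermediate parities (of $s$ and of $i$) control the outcome, and organizing the four parity combinations of $(s,i)$ so that they collapse to the stated two-case answer --- the case $2\mid i \wedge 2\nmid s$ giving $a^{2^{l-1}}$ and all others giving $1$ --- without sign errors. Once the conjugation identity $(u^i v)a^s(u^iv)^{-1}=a^{n_1^i n_2 s}$ is pinned down and combined with (3)--(6) and $(u^iv)^2 = 1$, (7) falls out by direct substitution.
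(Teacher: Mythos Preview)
The paper gives no proof at all beyond the sentence ``These are easy consequences of the definition,'' so your overall strategy --- direct verification from the presentation and the explicit values $n_1=2^{l-1}+1$, $n_2=2^{l-1}-1$ --- is exactly what is intended, and your treatment of items (i)--(vi) is clean and correct.

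Your argument for (vii), however, contains a concrete error that sends you on an unnecessary detour. After correctly deriving $(a^s u^i v)^2 = a^{s(1+n_1^i n_2)}$ and $(u^i v)^2=1$, you assert that when $i$ is even, $n_1^i n_2 \equiv n_2 \equiv -1 \bmod 2^l$. This is false: $n_2 = 2^{l-1}-1$, not $2^l-1$, so $n_2 \not\equiv -1$. With this fixed, the case analysis is immediate and there is no contradiction to resolve: for $i$ even, $n_1^i\equiv 1$ gives $1+n_1^i n_2 \equiv 1+n_2 = 2^{l-1}$, so the exponent is $s\cdot 2^{l-1}$, which is $0$ or $2^{l-1}$ mod $2^l$ according as $s$ is even or odd; for $i$ odd, $n_1^i\equiv n_1$ and (ii) gives $1+n_1 n_2 \equiv 0$, so the square is $1$ regardless of $s$. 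This matches the stated formula directly. Your speculation about working mod $2^{l+1}$, and the later reroute through (v)--(vi), are symptoms of this single slip and should simply be deleted; the clean computation above is all that is needed.
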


\begin{cor}\label{invgen}
    Let $\G$ be as in Definition \ref{presentation}.  Then $\G$ is generated by involutions.  In particular, $\G=\cyc{v,uv,auv}$.
\end{cor}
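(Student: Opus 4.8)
The plan is to show that the subgroup $H=\cyc{v,uv,auv}$ of $\G$ contains each of the defining generators $a,u,v$; combined with the obvious reverse inclusion this yields $H=\G$, and the fact that the three displayed generators are involutions is then read off from Corollary \ref{basicident}(7). There is essentially no technical obstacle: the whole argument is a short manipulation of the defining relations, and the only point requiring a moment's care is verifying that $v$, $uv$, $auv$ really do square to $1$ in the full group $\G$ (a priori an element of the form $a^su^iv$ could square to the central involution $a^{2^{l-1}}$ instead), which is exactly what Corollary \ref{basicident}(7) settles for the parities that occur here.

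Concretely, I would proceed as follows. Trivially $v\in H$. Since $v^2=1$ we get $u=(uv)v\in H$. To produce $a$, use the dihedral relation $vuv=u^{-1}$: computing in $H$,
\[
(auv)(uv)=auvuv=au(vuv)=auu^{-1}=a,
\]
so $a\in H$ as well. Hence $\G=\cyc{a,u,v}\subseteq H\subseteq\G$, giving $\G=\cyc{v,uv,auv}$.

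Finally, to confirm these are involutions, I would apply Corollary \ref{basicident}(7) in the three relevant cases: writing $v=a^0u^0v$ we have $2\mid 0$, so $v^2=1$; writing $uv=a^0u^1v$ we have $2\nmid 1$, so $(uv)^2=1$; writing $auv=a^1u^1v$ we again have $2\nmid 1$, so $(auv)^2=1$. (Each of these also follows directly from the defining relations $v^2=1$ and $vuv=u^{-1}$.) Therefore $\G$ is generated by the three involutions $v$, $uv$, $auv$, completing the proof.
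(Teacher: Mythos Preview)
Your proof is correct and follows essentially the same approach as the paper's. The paper declares the equality $\G=\cyc{v,uv,auv}$ ``immediate'' and then verifies the involution property by direct computation of $(uv)^2$ and $(auv)^2$; you spell out the immediacy explicitly and invoke Corollary~\ref{basicident}(7) for the squares, but these are minor presentational differences rather than a different route.
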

\begin{proof}
  That $\G=\cyc{v,uv,auv}$ is immediate.  We easily compute that
  $$uv\cdot uv = u u^{-1} v v = 1$$
  $$auv\cdot auv = a a^{-1}uv uv = 1$$
  and the result follows.
\end{proof}

For the rest of this section, we continue to investigate the structure of $\G$.  We will need to know the conjugacy classes and centralizers in particular by Proposition \ref{MOrho}.

\begin{lem}\label{center}
    Let $G=\G$ be as in Definition \ref{presentation}.  Then
    $$Z(G)=\cyc{a^{2^{l-1}},u^{k/2}}\cong \BZ_2\times\BZ_2.$$
\end{lem}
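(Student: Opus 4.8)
The plan is to establish the two inclusions $\cyc{a^{2^{l-1}},u^{k/2}}\subseteq Z(G)$ and $Z(G)\subseteq\cyc{a^{2^{l-1}},u^{k/2}}$ separately, and then to identify the abstract isomorphism type.

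For the first inclusion it suffices, since $G=\cyc{a,u,v}$, to check that $a^{2^{l-1}}$ and $u^{k/2}$ each commute with $a$, $u$, and $v$. The element $a^{2^{l-1}}$ obviously commutes with $a$; conjugating by $u$ gives $a^{n_1 2^{l-1}}=a^{2^{l-1}}$ by Corollary \ref{basicident}(3) (as $2^{l-1}$ is even, using $l\geq 3$), and conjugating by $v$ gives $a^{n_2 2^{l-1}}=a^{-2^{l-1}}=a^{2^{l-1}}$ by Corollary \ref{basicident}(5). The element $u^{k/2}$ obviously commutes with $u$, and conjugating by $v$ gives $u^{-k/2}=u^{k/2}$; finally, an induction on $j$ from $uau^{-1}=a^{n_1}$ gives $u^j a u^{-j}=a^{n_1^j}$, so $u^{k/2}au^{-k/2}=a^{n_1^{k/2}}=a$ because $4\mid k$ makes $k/2$ even, whence $n_1^{k/2}=(n_1^2)^{k/4}\equiv 1\bmod 2^l$ by Corollary \ref{basicident}(1). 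This is the only place the hypothesis $4\mid k$ enters.

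For the reverse inclusion I would write an arbitrary element in the normal form $z=a^s u^i v^\eps$ with $\eps\in\{0,1\}$, legitimate since $G=\BZ_{2^l}\rtimes D_k$, and extract constraints from the requirement that a central $z$ commute with $u$ and with $v$. Using $uvu^{-1}=u^2 v$ (a consequence of $vuv=u^{-1}$), conjugation of $z$ by $u$ yields $a^{n_1 s}u^{i+2\eps}v^\eps$; comparing with $z$ forces $a^{n_1 s}=a^s$, i.e.\ $s$ even by Corollary \ref{basicident}(3), and, when $\eps=1$, also forces $u^2=1$, which is impossible since $k\geq 4$. Hence $\eps=0$ and $s$ is even. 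Then conjugation of $z=a^s u^i$ by $v$ yields $a^{n_2 s}u^{-i}=a^{-s}u^{-i}$ (Corollary \ref{basicident}(5), as $s$ is even), and comparison with $z$ gives $a^{2s}=1$ and $u^{2i}=1$, i.e.\ $2^{l-1}\mid s$ and $(k/2)\mid i$. Therefore $z\in\{1,\ a^{2^{l-1}},\ u^{k/2},\ a^{2^{l-1}}u^{k/2}\}=\cyc{a^{2^{l-1}},u^{k/2}}$.

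It remains to observe that $a^{2^{l-1}}$ and $u^{k/2}$ are distinct, nontrivial, commuting involutions: both square to $1$, both are nontrivial since $\ord(a)=2^l$ and $\ord(u)=k$, and they are distinct because $a^{2^{l-1}}$ lies in the normal cyclic subgroup $\cyc a$ while $u^{k/2}$ does not. Hence the subgroup they generate is a Klein four-group, giving $Z(G)\cong\BZ_2\times\BZ_2$. I do not expect any conceptual obstacle; the only point demanding care is keeping the semidirect-product conjugation formulas and the required parities straight so that Corollary \ref{basicident} applies exactly as claimed.
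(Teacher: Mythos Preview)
Your proof is correct and follows essentially the same approach as the paper: both arguments determine the center by computing conjugation of a general element by the generators and reading off the resulting congruence conditions. The only organizational difference is that you split the argument into two inclusions and, for the reverse inclusion, use only conjugation by $u$ and $v$ (which suffices once the forward inclusion is in hand), whereas the paper conjugates by all three generators $a,u,v$ simultaneously; this is a minor economy, not a different method.
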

\begin{proof}
  Suppose that $a^i u^j v^x\in Z(G)$.  Then
  \begin{eqnarray}
    va^i u^j v^x v &=& a^{n_2 i} u^{-j} v^x\label{vconj}\\
    u a^i u^j v^x u^{-1} &=& a^{n_1 i}u^{j+1-(-1)^{x}}v^x\label{uconj}\\
    a a^i u^j v^x a^{-1} &=& a^{i+1-n_1^j n_2^x}u^j v^x\label{aconj}
  \end{eqnarray}
  We then have that
  \begin{eqnarray*}
    (\ref{vconj}) = a^i u^j v^x &\iff& 2^{l-1}\mid i \wedge k\mid 2j\\
    (\ref{uconj}) = a^i u^j v^x &\iff& 2\mid i \wedge 2\mid x\\
    (\ref{aconj}) = a^i u^j v^x &\iff& n_1^j n_2^x\equiv 1\bmod 2^l \iff 2\mid j \wedge 2\mid x.
  \end{eqnarray*}
  Combining and using the facts that $4\mid k$ and $G=\cyc{a,u,v}$, we obtain that $Z(G) = \cyc{a^{2^{l-1}},u^{k/2}}$.  The isomorphism is then obvious.
\end{proof}

\begin{prop}\label{classes}
    Let $G=\G$ and $n_1,n_2$ be as in Definition \ref{presentation}.  The conjugacy classes of $G$ are given as follows.
    \begin{enumerate}
      \item $$\class(a^s u^{2i}) = \{ a^s u^{2i}, a^{n_1 s}u^{2i}, a^{n_2 s} u^{-2i}, a^{-s} u^{-2i}\}.$$  In particular, if $a^s u^{2i}\not\in Z(G)$ then $|\class(a^s u^{2i})| = 2$ if $2\mid s$, and $|\class(a^s u^{2i})| = 4$ if $2\nmid s$.  There are $2^{l-3}k-2$ such non-singleton classes with $2\mid s$, and $2^{l-3}\cdot \frac{k}{2}$ such classes with $2\nmid s$.
      \item $$\class(a^s u^{2i-1}) = \{ a^s u^{2i-1}, a^{s+2^{l-1}}u^{2i-1}, a^{-s}u^{1-2i}, a^{-s + 2^{l-1}} u^{1-2i}\}.$$  In particular, $|\class(a^s u^{2i-1})| = 4$ and there are $2^{l-3}k$ distinct conjugacy classes of this form.
      \item $$\class(a^s u^{i} v ) = \bigcup_{r,t}\left\{ a^{2r+s}u^{4t+i}v, a^{2r+n_2 s}u^{4t-i}v, a^{2r-s}u^{4t+2-i}v, a^{2r + n_1 s} u^{4t+2+i}v\right\}.$$
      There are $2^{l-2}k$ elements in each such conjugacy class, and there are $4$ distinct conjugacy classes of this form, determined by the parity of $s$ and $i$.
    \end{enumerate}
    Subsequently, $G$ has a total of $6+ 5\cdot 2^{l-3}\frac{k}{2}$ distinct conjugacy classes, including the singleton classes.
\end{prop}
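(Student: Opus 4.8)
\textit{Proof plan.} Since $G=\cyc{a,u,v}$ by Definition \ref{presentation}, every element of $G$ is a word in $a^{\pm1}$, $u^{\pm1}$, $v$, so $\class(g)$ is the smallest subset of $G$ containing $g$ and stable under conjugation by each of $a$, $u$, and $v$. Conjugation by a generator is computed by the formulas $(\ref{vconj})$--$(\ref{aconj})$ derived in the proof of Lemma \ref{center}, and the ensuing arithmetic of exponents modulo $2^l$ and $k$ is governed by Corollary \ref{basicident}. I will write $g$ in its normal form $a^su^jv^x$ and split into the three cases (i) $x=0$, $j$ even; (ii) $x=0$, $j$ odd; (iii) $x=1$. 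Note first that conjugation by any generator leaves $x$ fixed and leaves the parity of $j$ fixed (by $(\ref{uconj})$ the $u$-exponent changes by $0$ or $2$, by $(\ref{vconj})$ it changes sign, by $(\ref{aconj})$ it is unchanged); hence each of (i)--(iii) is a union of full conjugacy classes, and it will suffice to exhibit the class of a generic element of each shape and then check that the resulting element counts total $|G|=2^{l+1}k$.

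For $g=a^su^{2i}$, since $n_1^2\equiv1\bmod 2^l$ (Corollary \ref{basicident}(1)) equation $(\ref{aconj})$ shows conjugation by $a$ fixes $g$, while $(\ref{uconj})$ and $(\ref{vconj})$ send $g$ to $a^{n_1s}u^{2i}$ and $a^{n_2s}u^{-2i}$, and one further conjugation by $u$ yields $a^{-s}u^{-2i}$ (using $n_1n_2\equiv-1$). Using Corollary \ref{basicident}(1) one verifies this four-element set is already stable under all three conjugations, so it equals $\class(g)$; its cardinality is then read off from Corollary \ref{basicident}(3)--(6): for $2\mid s$ it collapses to $\{a^su^{2i},a^{-s}u^{-2i}\}$, which has two elements unless $2s\equiv0$ and $4i\equiv0$ (in which case $g$ is one of the central elements), while for $2\nmid s$ the facts $2s\not\equiv 0,\ 2^{l-1}\bmod 2^l$ force the four listed elements to be pairwise distinct. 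The case $g=a^su^{2i-1}$ is handled identically, the only difference being that $(\ref{aconj})$ now shifts the $a$-exponent by $1-n_1\equiv 2^{l-1}$; combining the three conjugations produces the displayed four-element set, which always has four distinct elements because $u^{2i-1}\neq u^{1-2i}$ (as $4\mid k$, one has $2(2i-1)\not\equiv0\bmod k$).

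The heart of the argument is $g=a^su^jv$. By $(\ref{uconj})$ and $(\ref{vconj})$ conjugation by $u$ and by $v$ send $g$ to $a^{n_1s}u^{j+2}v$ and $a^{n_2s}u^{-j}v$, so, together with the parity remark above, $\class(g)\subseteq\{a^{s'}u^{j'}v:s'\equiv s,\ j'\equiv j\bmod 2\}$, which is exactly the union displayed in part (iii) once one observes that $\{j,-j,j+2,-j+2\}$ already exhausts the residues congruent to $j$ modulo $2$ in $\BZ/k\BZ$. The reverse inclusion is the one place where $l\geq3$ is indispensable: by $(\ref{aconj})$ conjugation by $a$ moves the $a$-exponent by $2$ when $j$ is odd and by $2-2^{l-1}$ when $j$ is even, and iterating this generates all of $2\BZ/2^l\BZ$ in both cases --- for $j$ even because $\gcd(2-2^{l-1},2^l)=2$, which holds precisely because $1-2^{l-2}$ is odd for $l\geq3$. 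Hence from $g$ one reaches every $a^{s'}u^jv$ with $s'\equiv s\bmod 2$, and alternating with conjugation by $u$ (each such step followed by the $a$-conjugations needed to reset the $a$-exponent within its parity class) sweeps out every admissible $u$-exponent. Thus $\class(g)$ is exactly the displayed set, of size $\tfrac{2^l}{2}\cdot\tfrac{k}{2}=2^{l-2}k$, and there are precisely four such classes, indexed by the parities of $s$ and $j$.

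Finally, for the count of classes: there are $2^{l-2}k$ elements of shape (i) with $s$ even, $2^{l-2}k$ with $s$ odd, $2^{l-1}k$ of shape (ii), and $2^lk$ of shape (iii), adding up to $2^{l+1}k=|G|$. By Lemma \ref{center} the central elements are $1,\ a^{2^{l-1}},\ u^{k/2},\ a^{2^{l-1}}u^{k/2}$, and since $4\mid k$ we have $u^{k/2}=u^{2(k/4)}$, so all four are of shape (i) with $s$ even. Removing them and dividing the remaining elements of each shape by the respective class sizes $2,4,4,2^{l-2}k$ gives $2^{l-3}k-2$, $2^{l-3}\tfrac{k}{2}$, $2^{l-3}k$, and $4$ non-singleton classes; adding the $4$ singletons yields $6+5\cdot2^{l-3}\tfrac{k}{2}$. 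I expect the main obstacle to be bookkeeping: in each parity regime one must confirm that the orbit under generator-conjugation closes up to exactly the claimed set and that the listed representatives are pairwise distinct (so that the class sizes, and hence all the counts, are correct), together with the elementary divisibility fact $\gcd(2-2^{l-1},2^l)=2$ for $l\geq3$ underlying case (iii).
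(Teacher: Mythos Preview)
Your argument is correct. Both you and the paper compute the classes by analyzing conjugation and parity, but the implementations differ. The paper first derives a single master formula
\[
(a^p u^q v^r)(a^s u^i v^j)(a^p u^q v^r)^{-1}=a^{(1-n_1^i n_2^j)p+n_1^q(n_2^r s)}u^{(-1)^r i+(1-(-1)^j)q}v^j
\]
for conjugation by an arbitrary element, then breaks into parity cases; for part~(iii) it proves the equivalent statement that $\class(a^s u^i v)=\class(a^p u^j v)$ iff $s\equiv p$ and $i\equiv j\bmod 2$, by explicitly locating $a^p u^j v$ among the listed conjugates. You instead work only with the three generator conjugations already recorded in~(\ref{vconj})--(\ref{aconj}), build each orbit by iteration, and for part~(iii) isolate the single arithmetic fact that makes the argument work, namely $\gcd(2-2^{l-1},2^l)=2$ for $l\ge 3$, so that $a$-conjugation alone sweeps the full even coset of $a$-exponents. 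This is a more constructive route and makes transparent exactly where the hypothesis $l\ge 3$ enters; the paper's approach is more uniform but leaves the parity bookkeeping implicit. One small point of exposition: your sentence that ``$\{j,-j,j+2,-j+2\}$ already exhausts the residues congruent to $j$ modulo $2$'' should be read as a statement about residues modulo~$4$ (these four values hit both classes $j,j+2\bmod 4$, so together with the free parameter $t$ in $4t+\cdots$ they cover all of $j+2\BZ/k\BZ$); as written it could be misread as a claim about four specific elements of $\BZ/k\BZ$.
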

\begin{proof}
  The singleton classes are just the central elements, of which there are 4 by Lemma \ref{center}.

  For any $i,j,p,q,r,s\in\BZ$ we have
  \begin{eqnarray}
    (a^p u^q v^r)(a^s u^i v^j) (v^r u^{-q} a^{-p}) &=& a^p u^q a^{n_2^r s}u^{(-1)^r i-(-1)^j q}a^{-n_2^j p}v^j\nonumber\\
    &=& a^p a^{n_1^q(n_2^r s)}u^{(-1)^r i+(1-(-1)^j)q}a^{-n_2^j p}v^j\nonumber\\
    &=& a^{(1-n_1^i n_2^j)p+n_1^q(n_2^r s)}u^{(-1)^r i+(1-(-1)^j)q}v^j\label{genconj}.
  \end{eqnarray}
  Breaking down into the cases of the parity of $p,q,r,i,j$ and using Corollary \ref{basicident} yields the desired set equalities after a bit of tiresome but elementary computation.  The cardinality statements for i) and ii) are then applications of Corollary \ref{basicident} and simple counting arguments.

  For the remaining claims in iii), we need only show that $$\class(a^s u^i v) = \class(a^p u^j v) \iff s\equiv p\bmod 2 \wedge i\equiv j\bmod 2.$$
  It is clear from the set membership that $$a^p u^j v\in\class(a^s u^i v) \Rightarrow s\equiv p \bmod 2 \wedge i\equiv j\bmod 2.$$
  So suppose that $i\equiv j\bmod 2$ and $s\equiv p\bmod 2$.

  If $i\equiv j\bmod 4$, then there are $r,t\in\BZ$ such that $a^p u^j v = a^{2r+s}u^{4t+i}v$, whence $a^p u^j v \in\class(a^s u^i v)$.  Else, there is a $t\in\BZ$ such that $4t+2+i\equiv j\bmod k$.  We claim that there is an $r\in\BZ$ with $2r+n_1 s\equiv p\bmod 2^l$.  Then we can again conclude that $a^p u^j v\in\class(a^s u^i v)$.  To this end,
  \begin{eqnarray*}
    2r+n_1s\equiv p \bmod 2^l &\iff& 2r\equiv p-n_1s \bmod 2^l.
  \end{eqnarray*}
  Since $n_1$ is odd we have $2\mid p-n_1 s$, so solutions to this equation exist.  This completes the proof.
\end{proof}

\begin{cor}\label{completelyreal}
    Let $\G$ be as in Definition \ref{presentation}.  Then $\G$ is completely real.
\end{cor}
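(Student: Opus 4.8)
The plan is to verify directly from Proposition~\ref{classes} that every conjugacy class of $G$ is closed under inversion, so that each element is conjugate to its inverse and $G$ is completely real. I would begin by disposing of the central elements: by Lemma~\ref{center} every nontrivial element of $Z(G)$ has order $2$, so it equals its own inverse and its singleton class is trivially inverse-closed. After that only the three families of non-central classes in Proposition~\ref{classes} need to be checked, and in each case I would compute $g^{-1}$ explicitly using Corollary~\ref{basicident} and observe that the result appears in the stated class.

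For $g=a^{s}u^{2i}$: since $n_1^{2}\equiv 1\bmod 2^{l}$ by Corollary~\ref{basicident}(1), the element $u^{2i}$ commutes with $a$, so $g^{-1}=u^{-2i}a^{-s}=a^{-s}u^{-2i}$, which is precisely the fourth element listed in $\class(a^{s}u^{2i})$. For $g=a^{s}u^{2i-1}$: here $u^{1-2i}a u^{-(1-2i)}=a^{n_1}$ because $1-2i$ is odd, so $g^{-1}=u^{1-2i}a^{-s}=a^{-n_1 s}u^{1-2i}$; by parts (3)--(4) of Corollary~\ref{basicident} this equals $a^{-s}u^{1-2i}$ when $s$ is even and $a^{-s+2^{l-1}}u^{1-2i}$ when $s$ is odd, and both of these occur in $\class(a^{s}u^{2i-1})$. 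Thus both rotation-type families are handled immediately.

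The reflection-type classes $\class(a^{s}u^{i}v)$ are the only ones requiring a moment of care, since such an element need not be an involution: by Corollary~\ref{basicident}(7) its square is either $1$ or $a^{2^{l-1}}$. If the square is $1$ the element equals its own inverse and there is nothing to check. If the square is $a^{2^{l-1}}$ — which happens exactly when $i$ is even and $s$ is odd — then the element has order $4$ and its inverse equals its cube, namely $a^{2^{l-1}}\cdot a^{s}u^{i}v=a^{s+2^{l-1}}u^{i}v$, using that $a^{2^{l-1}}$ is central by Lemma~\ref{center}. Since $l\geq 3$ the shift $2^{l-1}$ is even, so $a^{s+2^{l-1}}u^{i}v$ has the same parities of $a$- and $u$-exponent as $a^{s}u^{i}v$, hence lies in the same class by Proposition~\ref{classes}(iii). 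This exhausts all conjugacy classes, so $G$ is completely real.

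The only place I expect a slip to be likely is this last step: one must remember that multiplying by the central involution $a^{2^{l-1}}$ leaves untouched the parity data that labels the four reflection-type classes, and that $2^{l-1}$ is genuinely even because $l\geq 3$. Every other case is a one-line application of the congruences in Corollary~\ref{basicident}.
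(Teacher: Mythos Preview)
Your proof is correct and follows essentially the same approach as the paper: a direct case-by-case check, using Proposition~\ref{classes} and Corollary~\ref{basicident}, that every element is conjugate to its inverse. The only notable difference is in the reflection case $a^s u^i v$ with $2\mid i$ and $2\nmid s$: the paper exhibits explicit parameters $r,t$ to locate $a^{s+2^{l-1}}u^i v$ in the class description, whereas you invoke the parity characterization of these classes directly, which is slightly cleaner but amounts to the same verification.
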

\begin{proof}
  For elements of the form $a^s u^i v$, Corollary \ref{basicident}.vii shows that $a^s u^i v$ is self-inverse $\iff$ $2\mid s\vee 2\nmid i$.  So the non-trivial case for such elements is $2\nmid s \wedge 2\mid i$, where the element has order 4.  In this case, by the same Corollary,
  $$(a^s u^i v)^{-1} = (a^s u^i v)^3 = a^{s+2^{l-1}}u^i v.$$
  In the notation of Proposition \ref{classes}.iii, if we take $t=i/2$ and $r=-n_2 s$, we have
  $$a^{2r+n_2 s}u^{4t-i}v=a^{-n_2 s}u^ i v\in\class(a^s u^i v),$$
  and by Corollary \ref{basicident} we have
  $$a^{-n_2 s}u^ i v = a^{s+2^{l-1}}u^i v = (a^s u^i v)^{-1}.$$

  So now we need only prove that $a^s u^i$ is conjugate to its own inverse for all $s,i$.  This is obvious for the central elements, as those all have order 2 and are thus their own inverses.  Generally, we have
  $$(a^s u^i)^{-1} = u^{-i} a^{-s} = a^{-n_1^i s}u^{-i}.$$
  Using Corollary \ref{basicident} again, we conclude that this element is in $\class(a^s u^i)$, as given in Proposition \ref{classes}.i-ii.
\end{proof}
\remark If we remove the assumption that $4\mid k$ in the definition of $\G$, then the resulting group is no longer completely real in general.  The essential difference is that $\class(a^s u^{2i-1}) = \{ a^s u^{2i-1}, a^{s+2^{l-1}}u^{2i-1}\}$ when $4\nmid k$.  Such an element fails to be conjugate to its inverse precisely when $$2i\not\equiv 1\bmod \frac{k}{2} \vee \left(2^{l-1}\nmid s \wedge s(2^{l-2}-1)\not\equiv 2^{l-2}\bmod 2^{l-1}\right).$$\newline
\indent We complete this section by computing the centralizers for our groups.

\begin{prop}\label{centralizers}
    Let $G=\G$ and $n_1$ be as in Definition \ref{presentation}.  Define groups of the form $\BZ_x\rtimes_n \BZ_{2\cdot y}$ as in \cite[Def. 2.1]{K}.  The centralizers of non-central elements in $G$ are given as follows.

    \begin{enumerate}
      \item
      $$C_G(a^s u^{2i}) = \left\{ \begin{array}{lll}
        \cyc{a,u} \cong \BZ_{2^l}\rtimes_{n_1}\BZ_{2\cdot k/2} &;& 2\mid s\\
        \cyc{a,u^2} \cong \BZ_{2^l}\times \BZ_{k/2} &;& 2\nmid s
      \end{array}\right.$$
      In particular, $C_G(a^s u^{2i})$ is a normal subgroup of $G$.

      \item
      $$C_G(a^s u^{2i-1}) = \left\{ \begin{array}{lll}
        \cyc{a^2,u}\cong \BZ_{2^{l-1}}\times \BZ_k &;& 2\mid s\\
        \cyc{a^2, au}\footnotemark &;& 2\nmid s
      \end{array}\right.$$
      In particular, $C_G(a^s u^{2i-1})$ is an abelian normal subgroup of $G$ of index 4.

      \item
      $$C_G(a^s u^i v) = \cyc{a^{2^{l-1}},u^{k/2},a^s u^i v}.$$
      In particular, $C_G(a^s u^i v)$ is an abelian subgroup of $G$ of order 8.  Indeed,
      $$C_G(a^s u^i v) \cong \left\{ \begin{array}{lll}
        \BZ_4\times\BZ_2 &;& 2\mid i \wedge 2\nmid s\\
        \BZ_2^3 &;& 2\nmid i \vee 2\mid s
      \end{array}\right.$$
    \end{enumerate}
\end{prop}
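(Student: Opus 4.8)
The plan is to pin down each centralizer by combining the orbit--stabilizer theorem with the conjugacy-class sizes already recorded in Proposition~\ref{classes}, and then to exhibit an explicit subgroup of the predicted type sitting inside the centralizer. Since $|G| = 2^{l}\cdot 2k = 2^{l+1}k$, for a non-central $x\in G$ we have $|C_G(x)| = |G|/|\class(x)|$, and Proposition~\ref{classes} gives $|\class(x)|$ in each of the three families: $2$ or $4$ for $x = a^s u^{2i}$ according to the parity of $s$, always $4$ for $x = a^s u^{2i-1}$, and $2^{l-2}k$ for $x = a^s u^i v$. Hence $|C_G(x)|$ equals $2^{l}k$ (resp. $2^{l-1}k$) for $x=a^su^{2i}$ with $s$ even (resp. odd), $2^{l-1}k$ for $x=a^su^{2i-1}$, and exactly $8$ for $x=a^su^iv$.

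In each case I would produce a subgroup $H$ with the stated generators, verify $H\subseteq C_G(x)$ by a short computation from Corollary~\ref{basicident} (for instance: $u$ commutes with $a^s$ precisely when $2\mid s$ by \ref{basicident}.iii; $a$ and $u^2$ commute with $u^{2i}$, and $u^2$ with $a^s$, because $n_1^2\equiv 1$; $au$ commutes with $a^su^{2i-1}$ when $s$ is odd because $1+n_1s\equiv s+n_1$ reduces to $(1-n_1)(1-s)=-2^{l-1}(1-s)\equiv 0\bmod 2^l$; and $Z(G)$ together with $a^su^iv$ commutes with $a^su^iv$ trivially), and then compute $|H|$. One gets $|\cyc{a,u}|=2^lk$, $|\cyc{a,u^2}|=2^lk/2=2^{l-1}k$, $|\cyc{a^2,u}|=2^{l-1}k$, and $|\cyc{a^{2^{l-1}},u^{k/2},a^su^iv}|=|\cyc{Z(G),a^su^iv}|=8$ (order $8$ because $a^su^iv\notin\cyc{a,u}\supseteq Z(G)$ while $(a^su^iv)^2\in Z(G)$ by \ref{basicident}.vii). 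Matching each against the orbit--stabilizer count forces $H=C_G(x)$. The isomorphism types then follow by inspection: $\cyc{a,u}\cong\BZ_{2^l}\rtimes_{n_1}\BZ_{2\cdot k/2}$, while $\cyc{a,u^2}\cong\cyc{a}\times\cyc{u^2}\cong\BZ_{2^l}\times\BZ_{k/2}$ and $\cyc{a^2,u}\cong\BZ_{2^{l-1}}\times\BZ_k$ since the relevant generators commute and $\cyc{a}\cap\cyc{u}=1$; for the last family $a^su^iv$ has order $4$ exactly when $2\mid i\wedge 2\nmid s$, giving $\BZ_4\times\BZ_2$, and order $2$ otherwise, giving $\BZ_2^3$. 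Normality of the first three subgroups is checked directly by conjugating the generators by $a,u,v$ via Corollary~\ref{basicident} and observing the result lands back in $H$ (e.g. $aua^{-1}=a^{-2^{l-1}}u\in\cyc{a^2,u}$ since $2^{l-1}$ is even).

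The one genuinely delicate case is $C_G(a^su^{2i-1})$ with $s$ odd, where $H=\cyc{a^2,au}$ and the order must be argued. I would use $(au)^2 = a^{2+2^{l-1}}u^2$, so that $u^2 = (a^2)^{-1}(a^2)^{-2^{l-2}}(au)^2\in H$ (legitimate since $l\geq 3$), giving $\cyc{a^2,u^2}\subseteq H$ of order $2^{l-2}k$; as $au$ has odd $a$-exponent it lies outside $\cyc{a^2,u^2}$, so by Lagrange $|H|\geq 2^{l-1}k$, and equality with $|C_G(x)|$ follows from the count above. That $H$ is abelian of index $4$ (with the abelian type recorded in the footnote) then comes from $a^2\cdot au = a^3u = au\cdot a^2$ together with a direct computation of the order of $au$. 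This order bookkeeping for $\cyc{a^2,au}$ is the main obstacle; the rest is a routine application of Corollary~\ref{basicident} and the class counts of Proposition~\ref{classes}. An alternative, more self-contained route solves $gxg^{-1}=x$ for $g=a^pu^qv^r$ directly from the conjugation identity \eqref{genconj} of the proof of Proposition~\ref{classes}, performing the parity analysis on $p,q,r$ and on $s,i$; it reproves the same answers but is computationally heavier.
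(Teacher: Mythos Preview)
Your proof is correct and takes a genuinely different route from the paper. The paper works entirely forward from the general conjugation formula~\eqref{genconj}: for each of the three families it sets $(\ref{genconj})=a^s u^i v^j$ and solves for the admissible $p,q,r$ by a parity case analysis, obtaining the centralizer as a set and only afterwards recognizing the subgroup generators; normality, abelianness, and the isomorphism types are then verified by conjugating the generators. Your approach instead reads off $|C_G(x)|$ from the orbit--stabilizer theorem together with the class sizes of Proposition~\ref{classes}, exhibits the stated generating set, checks it centralizes $x$ via a few identities from Corollary~\ref{basicident}, and matches orders. This is cleaner and avoids the longer congruence bookkeeping the paper carries out (indeed you note the paper's method as the ``alternative, more self-contained route'' at the end). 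The trade-off is that your argument \emph{presupposes} the class-size computations in Proposition~\ref{classes}, whereas the paper's direct method is logically independent of them and in fact reuses the same formula~\eqref{genconj} that produced those sizes. One small omission: you explicitly verify normality only for ``the first three subgroups''; the paper also checks normality of $\cyc{a^2,au}$ by conjugating $a^2$ and $au$ by each of $a,u,v$, which your write-up should include to match the full statement of part~(ii).
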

\footnotetext{The exact isomorphism type of this subgroup depends on the group $G$, but will not be needed here.}
\begin{proof}
  We have previously computed what a generic conjugate in $G$ looks like in equation (\ref{genconj}).  So we break things down by cases to obtain our centralizers.

  When $2\mid j$ and $2\mid i$, we have
  $$(\ref{genconj}) = a^{n_1^q n_2^r s}u^{(-1)^r i}.$$
  This is equal to $a^s u^i$ $\iff$ $(2\mid r \wedge n_1^q s\equiv s\bmod 2^l) \vee (2\nmid r \wedge k\mid 2i \wedge n_1^q n_2 s\equiv s\bmod 2^l)$.  For the $2\nmid r$ case, the equivalence holds $\iff$ $2^{l-1}\mid s$.  Since also $k\mid 2i$ in this case, this forces $a^s u^i\in Z(G)$.  Then for the $2\mid r$ case, the equivalence always holds if $2\mid q$.  When $q$ is odd, by Corollary \ref{basicident} the equivalence holds precisely when $2\mid s$.  This gives the identities of part i).  The isomorphisms are clear.  When $2\mid s$ the normality is clear by definition of the action of $v$ on $u$ and $a$.  For $2\nmid s$, we easily find that
  \begin{eqnarray*}
     a u^2 a^{-1} &=& u^2\\
     v u^2 v &=& u^{-2}.
  \end{eqnarray*}
  Thus the centralizer is again normal as claimed.

  Now suppose $2\mid j$ and $2\nmid i$.  Then
  $$(\ref{genconj}) = a^{(1-n_1)p+n_1^q n_2^r s} u^{(-1)^r i}.$$
  This is equal to $a^s u^i$ $\iff$ $( 2\mid r \wedge (n_1-1)p\equiv (n_1^q-1)s\bmod 2^l)$.  For $2\nmid q$, this holds precisely when $p\equiv s\bmod 2$.  For $2\mid q$, it holds precisely when $2\mid p$.  This gives the equalities in part ii).  The index follows immediately from Proposition \ref{classes}.  By Corollary \ref{basicident}, the centralizer is abelian when $2\mid s$.  We also have that
  $$a^{2} au a^{-2} = a^{3-2n_1}u = a^{3-2(2^{l-1}+1)}u = au,$$
  so in this case the centralizer is again abelian.  For normality, we compute
  \begin{eqnarray*}
     ua^2 u^{-1} &=& a^{2 n_1}\\
     v a^2 v &=& a^{2 n_2}\\
     a(au)a^{-1} &=& a^{2-n_1} u = a^{2(1-2^{l-2})}\cdot au\\
     u (au) u^{-1} &=& ua = a^{n_1}u = a^{2^{l-1}}\cdot au\\
     v(au) v &=& a^{n_2}u^{-1}\\
     &=& u^{-1} a^{n_1 n_2} = u^{-1}a^{-1} = (au)^{-1}.
  \end{eqnarray*}
  Thus the centralizers are normal as claimed.

  Lastly, suppose that $2\nmid j$.  Then
  $$(\ref{genconj}) = a^{(1-n_1^i n_2)p+n_1^q n_2^r s}u^{(-1)^r i +2q} v.$$
  This is equal to $a^s u^i v$ if and only if
  \begin{eqnarray}\label{vcenteqn}
    (-1)^r i + 2q \equiv i\bmod k \wedge (n_1^i n_2-1)p \equiv (n_1^q n_2^r-1)s\bmod 2^l.
  \end{eqnarray}
  We now break down these congruences into cases.

  When $2\mid r$, we have
  \begin{eqnarray*}
    (\ref{vcenteqn}) &\iff& 2q\equiv 0\bmod k \wedge (n_1^i n_2-1)p \equiv (n_1^q-1)s\bmod 2^l\\
    &\iff& q\equiv 0\bmod \frac{k}{2} \wedge (n_1^i n_2-1)p\equiv 0\bmod 2^l\\
    &\iff& q\equiv 0\bmod \frac{k}{2} \wedge 2^{l-1}\mid p.
  \end{eqnarray*}
  In the second line we have used that $4\mid k \wedge q\equiv 0\bmod k/2$ forces $2\mid q$.  These equivalences simply equate to the obvious statement that $\cyc{a^{2^{l-1}},u^{k/2}}=Z(G)\subseteq C_G(a^s u^i v)$.

  So for $2\nmid r$, we he have
  \begin{eqnarray*}
    (\ref{vcenteqn}) &\iff& 2q\equiv 2i\bmod k \wedge (n_1^i n_2-1)p\equiv (n_1^q n_2-1)s\bmod 2^l\\
    &\iff& q\equiv i\bmod \frac{k}{2} \wedge (n_1^i n_2-1)p\equiv (n_1^i n_2-1)s\bmod 2^l\\
    &\iff& q\equiv i\bmod \frac{k}{2} \wedge p\equiv s \bmod 2^{l-1}
  \end{eqnarray*}
  In the second line we have used that $4\mid k\wedge q\equiv i\bmod \frac{k}{2}$ forces $q\equiv i\bmod 2$. These last equivalences amount to the statement that $\cyc{a^{2^{l-1}},u^{k/2},a^s u^i v}\subseteq C_G(a^s u^i v)$.  Since this exhausts all possible cases, we in fact conclude that $\cyc{a^{2^{l-1}},u^{k/2},a^s u^i v} = C_G(a^s u^i v)$.  Since two of the generators are in the center of $G$, it is clear that these groups are abelian.  The isomorphisms follow from Corollary \ref{basicident} and Lemma \ref{center}, and the index follows from Proposition \ref{classes}.iii.
\end{proof}


\section{Representations of $\G$}\label{repsect}
We proceed now to determine the character theory of the groups given by Definition \ref{presentation}.

Proposition 2.7.i in part says that $H_1=\cyc{a,u^2}$ is an abelian normal subgroup of $\G$.  If $\alpha$ is a generating irreducible character for $\cyc{a}$ and $\beta$ is a generating irreducible character for $\cyc{u^2}$, then the irreducible characters of $H_1$ are of the form $\alpha^r\otimes \beta^t$ for some $r,t\in\BZ$.  We now show that inducing these characters up to $\G$ yield 4-dimensional representations of $\G$ which are often irreducible.
\begin{thm}\label{4dimreps}
  Let $G=\G$ and $n_1$ be defined as in Definition \ref{presentation}.  Let $\varphi_{r,t}=\alpha^r\otimes\beta^t \in \widehat{H_1}$ be any irreducible character of $H_1$, as above.  Then the induced character $\varphi_{r,t}^G$ has dimension 4.  Furthermore, $\varphi_{r,t}^G$ is irreducible if and only if $2\nmid r$.  Amongst these irreducible characters, there are $2^{l-3}\frac{k}{2}$ isomorphism classes.
\end{thm}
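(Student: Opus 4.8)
The plan is to use the standard machinery of induced characters from the abelian normal subgroup $H_1=\cyc{a,u^2}$, exploiting that $[G:H_1]=4$. First I would identify a transversal for $H_1$ in $G$: since $G=\cyc{a,u,v}$ and $u^2,a\in H_1$, the cosets are represented by $\{1,u,v,uv\}$. Then $\varphi_{r,t}^G$ has dimension $4\cdot\dim\varphi_{r,t}=4$ because $\varphi_{r,t}$ is one-dimensional. This disposes of the dimension claim immediately.

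For irreducibility I would invoke the usual criterion (Mackey, or equivalently the fact that for a normal abelian subgroup the induced character $\varphi_{r,t}^G$ is irreducible iff the stabilizer of $\varphi_{r,t}$ in $G$ under the conjugation action on $\widehat{H_1}$ is exactly $H_1$). So the key computation is to work out how $u$ and $v$ act on the character $\alpha^r\ot\beta^t$. Using the relations $uau^{-1}=a^{n_1}$, $vav=a^{n_2}$ and $vu^2v=u^{-2}$, $uu^2u^{-1}=u^2$, one sees that conjugation by $u$ sends $\alpha^r\ot\beta^t$ to $\alpha^{n_1 r}\ot\beta^{t}$ and conjugation by $v$ sends it to $\alpha^{n_2 r}\ot\beta^{-t}$. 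By Corollary~\ref{basicident}.iii--vi, $\alpha^{n_1 r}=\alpha^r$ (equivalently $a^{n_1 r}=a^r$ as characters, i.e. $n_1 r\equiv r\bmod 2^l$) precisely when $2\mid r$, and likewise $\alpha^{n_2 r}=\alpha^{-r}$ when $2\mid r$. Hence when $2\mid r$ the subgroup $\cyc{a,u}$ already stabilizes $\varphi_{r,t}$ (it fixes $\alpha^r$ and fixes $\beta^t$ since $u$ centralizes $u^2$), so the stabilizer is strictly larger than $H_1$ and $\varphi_{r,t}^G$ is reducible. When $2\nmid r$, conjugation by $u$ genuinely moves $\alpha^r$ (to $\alpha^{r+2^{l-1}}\ne\alpha^r$ by \ref{basicident}.iv), so $u\notin\mathrm{Stab}(\varphi_{r,t})$; one then checks $v,uv\notin\mathrm{Stab}$ as well, using that $\alpha^{n_2 r}=\alpha^{2^{l-1}-r}\ne\alpha^{\pm r}$ for odd $r$. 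Thus the stabilizer is exactly $H_1$ and the induced character is irreducible. The main obstacle here is bookkeeping: one must be careful that the orbit of $\varphi_{r,t}$ under $G/H_1\cong\{1,u,v,uv\}$ has full size $4$ in the odd case, i.e. that the four characters $\alpha^r\ot\beta^t$, $\alpha^{r+2^{l-1}}\ot\beta^t$, $\alpha^{-r}\ot\beta^{-t}$, $\alpha^{2^{l-1}-r}\ot\beta^{-t}$ are pairwise distinct — this is where the oddness of $r$ is used essentially.

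Finally, for the count of isomorphism classes: irreducible induced characters $\varphi_{r,t}^G$ and $\varphi_{r',t'}^G$ coincide iff $(r',t')$ lies in the $G/H_1$-orbit of $(r,t)$ computed above. The parameter $r$ ranges over odd residues mod $2^l$, of which there are $2^{l-1}$, and $t$ ranges over residues mod $k/2$, giving $2^{l-1}\cdot\frac{k}{2}$ pairs; each orbit in the odd-$r$ case has exactly $4$ elements by the distinctness just established, so the number of classes is $\frac{1}{4}\cdot 2^{l-1}\cdot\frac{k}{2}=2^{l-3}\cdot\frac{k}{2}$, as claimed. I would also remark that no two of these irreducible $\varphi_{r,t}^G$ coincide with an irreducible induced from a different orbit, and that the Frobenius reciprocity argument guarantees every such character arises from exactly one orbit, so the count is exact.
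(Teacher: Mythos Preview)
Your argument is correct. You use Mackey's irreducibility criterion (equivalently, Clifford theory): since $H_1\lhd G$ is abelian of index $4$, the induced character $\varphi_{r,t}^G$ is irreducible iff the inertia group of $\varphi_{r,t}$ is exactly $H_1$, and two such induced characters agree iff the linear characters lie in the same $G/H_1$-orbit. Your orbit computation $\{\alpha^r\ot\beta^t,\ \alpha^{n_1 r}\ot\beta^t,\ \alpha^{n_2 r}\ot\beta^{-t},\ \alpha^{-r}\ot\beta^{-t}\}$ is right (the $uv$-conjugate uses $n_1 n_2\equiv -1$), and the parity analysis via Corollary~\ref{basicident} gives distinctness exactly when $2\nmid r$. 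The count $2^{l-1}\cdot\tfrac{k}{2}/4=2^{l-3}\cdot\tfrac{k}{2}$ follows.

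The paper takes a different route: it computes $\ip{\varphi_{r,t}^G}{\varphi_{q,s}^G}$ directly from the induced-character formula, reducing it to a sum $\frac14\sum_{x,y}\ip{\beta^{(-1)^x t}}{\beta^{(-1)^y s}}\ip{\alpha^{e(x)r}}{\alpha^{e(y)q}}$ over the four ``exponent twists'' $e(x)\in\{1,-1,n_1,n_2\}$. Setting $(q,s)=(r,t)$ gives irreducibility iff $2\nmid r$, and for general $(q,s)$ it yields the four congruence pairs $(q,s)\equiv(r,t),(-r,-t),(n_1 r,t),(n_2 r,-t)$ that describe when two induced characters coincide. These four pairs are of course exactly your $G/H_1$-orbit, so the two approaches converge. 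Your stabilizer argument is more conceptual and shorter; the paper's inner-product computation is more self-contained (no appeal to Mackey as a black box) and parallels the style used elsewhere in the paper, e.g.\ in the proof of Theorem~\ref{2dimreps}.
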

\begin{proof}
  There are clearly $2^{l-1}k$ distinct characters of $H_1$.  Let $\psi=\alpha^r\otimes\beta^t$ and $\phi=\alpha^q\otimes \beta^s$ be any two characters of $H_1$.  Using Proposition \ref{classes}, it is easily verified that $\psi^G$ has dimension 4.  By normality, $\psi^G(x)=0$ whenever $x\not\in H_1$.

  Define $e\colon \{1,2,3,4\}\to \{\pm 1, n_1, n_2\}$ by
  \begin{eqnarray}\label{edef}
    e(x) &=& \left\{ \begin{array}{cll} -1 &;& x=1\\
        1 &;& x=2\\
        n_2 &;& x=3\\
        n_3 &;& x=4 \end{array}\right.
  \end{eqnarray}
  Then, by Proposition \ref{classes} we have
  \begin{eqnarray*}
    \ip{\psi^G}{\psi^G} &=& \frac{1}{|G|} \sum_{i=0}^{k/2}\sum_{j=0}^{2^l -1} \sum_{x,y=1}^r \psi(a^{e(x)j} u^{(-1)^x 2i}) \psi(a^{-e(y)j} u^{(-1)^{y+1} 2i})\\
    &=& \frac{1}{|G|}\sum_{x,y=1}^4 \left( \sum_{i=0}^{k/2}\sum_{j=0}^{2^l -1} \alpha^{re(x)}(a^j) \beta^{(-1)^x s}(u^{2i})\alpha^{re(y)}(a^{-j}) \beta^{(-1)^y s}(u^{-2i})\right)\\
    &=& \frac{1}{|G|} \sum_{x,y=1}^4 \left( \sum_{i=0}^{k/2} \beta^{(-1)^x s}(u^{2i}) \beta^{(-1)^y s}(u^{-2i}) \left( \sum_{j=0}^{2^l -1} \alpha^{re(x)}(a^j) \alpha^{re(y)}(a^{-j})\right) \right)\\
    &=& \frac{1}{4} \sum_{x,y=1}^r \ip{\beta^{(-1)^x s}}{\beta^{(-1)^y s}} \ip{\alpha^{r e(x)}}{\alpha^{r e(y)}}
  \end{eqnarray*}
  This latter expression is equal to $1$ $\iff$ $2\nmid r$.  Thus $\psi^G$ is irreducible if and only if $2\nmid r$, and so there are $2^{l-2}k$ irreducible characters amongst such induced characters.

  To determine how many of these characters are distinct, we suppose that $2\nmid r$ and $2\nmid q$ and similarly compute
  \begin{eqnarray*}
    \ip{\psi^G}{\phi^G} &=& \frac{1}{4} \sum_{x,y=1}^4 \ip{\beta^{(-1)^x s}}{ \beta^{(-1)^y t}} \ip{\alpha^{e(x)q}}{\alpha^{e(y)r}}.
  \end{eqnarray*}
  This expression is non-zero, or equivalently $\psi^G \cong \phi^G$, if and only if there are \newline $1\leq x,y\leq 4$ solving the equations
  $$s\equiv (-1)^{x+y}t \bmod \frac{k}{2} \ \mbox{and} \ q\equiv e(x)e(y) r \bmod 2^l.$$
  This in turn is equivalent to one of following four pairs of congruences holding
  \begin{eqnarray*}
    s \equiv\ t \bmod \frac{k}{2} &\mbox{and}& \ q\equiv r \bmod 2^l\\
    s \equiv -t \bmod \frac{k}{2} &\mbox{and}& \ q\equiv -r \bmod 2^l\\
    s \equiv\ t \bmod \frac{k}{2} &\mbox{and}& \ q\equiv n_1 r \bmod 2^l\\
    s \equiv -t \bmod \frac{k}{2} &\mbox{and}& \ q\equiv n_2 r \bmod 2^l.
  \end{eqnarray*}
  Since these congruences respect the parity conditions $2\nmid r$ and $2\nmid q$, we conclude that there are four copies (up to isomorphism) of each irreducible character amongst these induced characters.  In particular, we obtain $2^{l-3}\frac{k}{2}$ distinct 4-dimensional irreducible characters in this fashion.
\end{proof}

Define $H_2 = \cyc{a,u}\cong \BZ_{2^{l}}\rtimes_{n_1} \BZ_{2\cdot k/2}$, a normal subgroup of $G$ by Proposition \ref{centralizers}.i.  By \cite{K}, the linear characters of $H_2$ are parameterized by $\phi_{r,t}= \gamma^r\otimes \rho^t\in\widehat{\BZ}_{2^{l-1}}\otimes \widehat{\BZ}_k$, with $\gamma$ and $\rho$ generating $\widehat{\BZ}_{2^{l-1}}$ and $\widehat{\BZ}_k$ respectively.  Evaluation on $H_2$ is then given by using any surjective homomorphism $\cyc{a,u}\twoheadrightarrow \cyc{a^2, u} \cong \BZ_{2^{l-1}}\times \BZ_k$.  Equivalently, these characters arise from the linear characters of the quotient $G/\cyc{a^{2^{l-1}},v}$.

\begin{thm}\label{2dimreps}
    Let $G=\G$ and $n_1$ be defined as in Definition \ref{presentation}.  Let $\phi_{r,t}=\gamma^r\otimes \rho^t\in \widehat{H_2}$ be any linear character of $H_2$, as above.  For any such linear character, the induced character $\phi_{r,t}^G$ has dimension 2.  Of these induced characters, $2^{l-1}k-4$ are irreducible, of which there are $2^{l-2}k-2$ distinct equivalence classes.
\end{thm}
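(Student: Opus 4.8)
The plan is to apply Clifford theory for the index-$2$ normal subgroup $H_2\trianglelefteq G$ (Proposition \ref{centralizers}.i). Since $[G:H_2]=2$ and $\phi_{r,t}$ is one-dimensional, $\phi_{r,t}^G$ automatically has dimension $2$, which is the first assertion. For an index-$2$ normal subgroup the $G$-action on the linear characters of $H_2$ factors through $G/H_2=\cyc{\bar v}$ (inner automorphisms fix linear characters), and the standard dichotomy says that $\phi_{r,t}^G$ is irreducible exactly when $\phi_{r,t}^v\neq\phi_{r,t}$, while if $\phi_{r,t}^v=\phi_{r,t}$ then $\phi_{r,t}$ admits two extensions to $G$ (differing by the sign character of $G/H_2$) and $\phi_{r,t}^G$ is their sum, hence reducible.

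The first step is to compute $\phi_{r,t}^v$. The character $\phi_{r,t}$ is inflated from $\cyc{a^2,u}\cong\BZ_{2^{l-1}}\times\BZ_k$ along the surjection $H_2\twoheadrightarrow\cyc{a^2,u}$, so one needs to know how $v$-conjugation acts on that quotient. From $vav=a^{n_2}$ and $vuv=u^{-1}$, together with $n_2\equiv -1\bmod 2^{l-1}$ (so that $a^{n_2}$ and $a^{-1}$ have the same image, the commutator subgroup of $H_2$ being $\cyc{a^{2^{l-1}}}$), one sees that $v$ inverts both factors, giving $\phi_{r,t}^v=\gamma^{-r}\ot\rho^{-t}=\phi_{-r,-t}$, with $-r$ read mod $2^{l-1}$ and $-t$ mod $k$.

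Next I would solve $\phi_{-r,-t}=\phi_{r,t}$: this amounts to $2r\equiv 0\bmod 2^{l-1}$ and $2t\equiv 0\bmod k$, i.e.\ $2^{l-2}\mid r$ and $\tfrac{k}{2}\mid t$, which holds for exactly the four pairs with $r\in\{0,2^{l-2}\}$ and $t\in\{0,k/2\}$. Since $H_2$ has $2^{l-1}k$ linear characters, exactly four induce reducibly and the remaining $2^{l-1}k-4$ induce irreducibly, which establishes the count of irreducible induced characters. For the number of isomorphism classes, Mackey's formula for the normal subgroup $H_2$ gives $\ip{\phi_{r,t}^G}{\phi_{r',t'}^G}=\delta_{(r',t')\equiv(r,t)}+\delta_{(r',t')\equiv(-r,-t)}$, so two irreducible induced characters agree exactly when their parameters are related by $(r,t)\mapsto(-r,-t)$. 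On the set of $2^{l-1}k-4$ parameters producing irreducible characters this involution is fixed-point-free --- that is precisely the irreducibility condition --- so these parameters fall into $(2^{l-1}k-4)/2=2^{l-2}k-2$ orbits, one per isomorphism class, which is the stated count.

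The only point requiring genuine care is the identification of $\phi_{r,t}^v$, namely tracking how $v$-conjugation interacts with the inflation $H_2\twoheadrightarrow\cyc{a^2,u}$ used to define $\phi_{r,t}$ --- in particular that $a^{n_2}$ and $a^{-1}$ coincide there; everything else is bookkeeping with the two modular conditions. (It is worth noting, though not part of this statement, that these two-dimensional irreducibles are automatically distinct from the linear characters of $G$ and from the four-dimensional irreducibles of Theorem \ref{4dimreps}, since characters of different dimensions cannot coincide.)
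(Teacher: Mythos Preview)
Your proof is correct and essentially matches the paper's argument. The paper computes directly that $\phi^G(x)=\phi(x)+\phi(x^{-1})$ for $x\in H_2$ and then $\ip{\phi^G}{\theta^G}=\ip{\phi}{\theta}+\ip{\phi\theta}{1}$, which is exactly the Mackey/Clifford computation you invoke once one notes (as you do) that $v$-conjugation acts on linear characters of $H_2$ by inversion; both routes arrive at the criterion $\phi^2\neq 1$ (equivalently $\phi^v\neq\phi$) for irreducibility and $\theta=\phi^{-1}$ for equivalence.
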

\begin{proof}
    There are clearly $2^{l-1}k$ distinct linear characters of $H_2$.  Let $\phi=\alpha^r\otimes\beta^t$ and $\theta=\alpha^q\otimes \beta^s$ be any two linear characters of $H_2$.  Using Propositions \ref{classes} and \ref{centralizers}, it is easily verified that $\phi^G$ has dimension 2.  Indeed, we have
    \begin{eqnarray*}
      \phi^G(x) &=& \left\{ \begin{array}{cll}
        \phi(x) + \phi(x^{-1}) &;& x\in H\\
        0 &;& x\not\in H \end{array}\right..
    \end{eqnarray*}

    Subsequently, we find that
    \begin{eqnarray*}
      \ip{\phi^G}{\theta^G} &=& \frac{1}{|G|}\sum_{x\in H} \left(\phi(x)+\phi(x^{-1})\right) \left(\theta(x) + \theta(x^{-1})\right)\\
      &=& \frac{1}{2}\ip{\phi}{\theta} + \frac{1}{2}\ip{\theta}{\phi} + \frac{1}{2}\ip{\phi\theta}{1} + \frac{1}{2}\ip{\theta\phi}{1}\\
      &=& \ip{\phi}{\theta} + \ip{\phi\theta}{1}.
    \end{eqnarray*}
    Specializing to $\theta=\phi$, we see that this expression is 1, or equivalently that $\phi^G$ is irreducible, if and only if $\phi^2\neq 1$.  There are exactly four $\phi$ with $\phi^2=1$, given by $r\equiv 0 \bmod 2^{l-2}$ and $t\equiv 0\bmod \frac{k}{2}$.  This means there are $2^{l-1}k-4$ irreducible characters amongst these induced characters.  We then see that, for a generic $\theta$, $\ip{\phi^G}{\theta^G} = 0 \iff \phi\neq\theta \wedge \phi\theta\neq 1$.  So when $\phi$, $\theta$ are distinct with $\phi^G$ and $\theta^G$ irreducible, we conclude that they are isomorphic if and only if $\phi = \theta^{-1}$.  Thus there are $2^{l-2}k-2$ isomorphism classes amongst these irreducible characters of $G$.
\end{proof}

It turns out that these 4-dimensional and 2-dimensional characters give all of the non-linear characters of $\G$, which we now prove.

\begin{thm}\label{1dimreps}
    Let $G=\G$ be as in Definition \ref{presentation}.  Then $G$ has exactly eight (non-isomorphic) irreducible 1-dimensional characters.  They are given by the mappings $\{a,u,v\}\mapsto \{-1,1\}$.  Equivalently, they come from the linear characters of the quotient group $G/\cyc{a^2,u^2}$.  All other irreducible characters of $G$ are given by those in Theorems \ref{4dimreps} and \ref{2dimreps}.  In particular, $G$ has exactly $6+ 5\cdot 2^{l-3}\frac{k}{2}$ non-isomorphic irreducible representations.
\end{thm}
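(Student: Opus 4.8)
My proof would be a counting argument resting entirely on results already in hand. Recall that the number of irreducible characters of $G$ equals its number of conjugacy classes, which Proposition \ref{classes} computes to be $6+5\cdot 2^{l-3}\tfrac{k}{2}$. Theorem \ref{4dimreps} produces $2^{l-3}\tfrac{k}{2}$ pairwise non-isomorphic irreducible characters of degree $4$, and Theorem \ref{2dimreps} produces $2^{l-2}k-2$ pairwise non-isomorphic irreducible characters of degree $2$. Since
\[
8+\bigl(2^{l-2}k-2\bigr)+2^{l-3}\tfrac{k}{2}=6+5\cdot 2^{l-3}\tfrac{k}{2},
\]
it is enough to exhibit exactly eight linear characters; that these, the degree-$2$ family, and the degree-$4$ family are mutually non-isomorphic is automatic, as the three families have three distinct degrees.

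To count the linear characters I would compute $[G,G]$ from the presentation in Definition \ref{presentation}. Directly, $[u,a]=a^{n_1-1}=a^{2^{l-1}}$, $[v,a]=a^{n_2-1}$, and $[v,u]=u^{-2}$; since the exponents $2^{l-1}$ and $n_2-1=2^{l-1}-2$ are both even and differ by $2$, the elements $a^{2^{l-1}}$ and $a^{n_2-1}$ generate $\cyc{a^2}$, so $\cyc{a^2,u^2}\subseteq[G,G]$. Conversely, modulo $\cyc{a^2,u^2}$ the odd exponents $n_1,n_2$ act trivially on $a$ and $vuv=u^{-1}=u$, so every defining relation becomes a commutation relation; hence $G/\cyc{a^2,u^2}$ is abelian, i.e. $[G,G]\subseteq\cyc{a^2,u^2}$, and therefore $[G,G]=\cyc{a^2,u^2}$. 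Using Proposition \ref{centralizers}.i to see $\cyc{a^2,u^2}\cong\BZ_{2^{l-1}}\times\BZ_{k/2}$ has order $2^{l-2}k$, we get $|G^{\mathrm{ab}}|=2^{l+1}k/2^{l-2}k=8$; being a quotient of $\BZ_2^{3}$ of order $8$, $G^{\mathrm{ab}}\cong\BZ_2^{3}$. Consequently $G$ has exactly eight linear characters, which are precisely the irreducible characters of $G/\cyc{a^2,u^2}$ and are realized by the eight sign assignments $\{a,u,v\}\mapsto\{\pm1\}$ (each respecting the relations because $n_1,n_2$ are odd and $k$ is even).

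Combining the eight linear characters with the two families from Theorems \ref{4dimreps} and \ref{2dimreps} therefore accounts for all $6+5\cdot 2^{l-3}\tfrac{k}{2}$ irreducible characters of $G$, with no room for any others; as a cross-check one may verify $8\cdot1^2+(2^{l-2}k-2)\cdot2^2+\bigl(2^{l-3}\tfrac{k}{2}\bigr)\cdot4^2=2^{l+1}k=|G|$. The only step carrying real content is the commutator computation, and the one place to be careful is confirming that $G/\cyc{a^2,u^2}$ does not collapse further than $\BZ_2^{3}$; the order count $|G^{\mathrm{ab}}|=8$ settles this, and with it the count of linear characters is exactly eight rather than fewer.
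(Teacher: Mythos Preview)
Your proposal is correct and follows essentially the same counting argument as the paper: both use Proposition \ref{classes} for the total number of irreducibles, invoke Theorems \ref{4dimreps} and \ref{2dimreps}, and check that eight linear characters complete the count. The only difference is that you explicitly compute $[G,G]=\cyc{a^2,u^2}$ to pin down the number of linear characters up front, whereas the paper simply exhibits the eight characters of $G/\cyc{a^2,u^2}\cong\BZ_2^3$ and lets the overall count certify that no further linear characters exist; your commutator computation and the sum-of-squares check are thus helpful but logically redundant additions to the same argument.
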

\begin{proof}
  We know that the number of distinct non-isomorphic irreducible characters of $G$ is equal to the number of conjugacy classes in $G$.  By Proposition \ref{classes}, there are thus $6 + 5\cdot 2^{l-3}\frac{k}{2}$ non-isomorphic irreducible characters of $G$.  The linear characters of the quotient $G/\cyc{a^2, u^2} \cong \BZ_2^3$ clearly yield 8 non-isomorphic linear characters of $G$.  Adding these 8 to the $2^{l-3}\frac{k}{2}$ characters from Theorem \ref{4dimreps} and the $2^{l-2}k-2$ characters from Theorem \ref{2dimreps}, we have accounted for $6+ 5\cdot 2^{l-3}\frac{k}{2}$ total distinct characters.  Thus these give all of the non-isomorphic irreducible characters of $G$.
\end{proof}

Since we have now accounted for every irreducible character of $G$ by inducing from certain linear characters, we recall the following definition.

\begin{df}\label{Mgroupdef}\cite[Def. 5.10]{I}
    Let $G$ be a finite group.  We say that $G$ is an $M$-group if for every irreducible character $\varphi$ of $G$ there is a subgroup $H\subseteq G$ (possibly $H=1$ or $H=G$) and a linear character $\phi$ of $H$ with $\phi^G=\varphi$.
\end{df}

\begin{cor}\label{mgroups}
  Let $G=\G$ be as in Definition \ref{presentation}.  Then for every $g\in G$, $C_G(g)$ is an $M$-group.  In particular, $G$ is an $M$-group.
\end{cor}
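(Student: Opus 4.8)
The plan is to reduce the statement to two ingredients: that $G$ itself is an $M$-group, and that abelian groups together with the single metacyclic group $H_2=\cyc{a,u}$ are $M$-groups. First I would dispose of the central elements. If $g\in Z(G)$ then $C_G(g)=G$, and $G$ is an $M$-group by Theorem \ref{1dimreps} combined with Definition \ref{Mgroupdef}: every irreducible character of $G$ is exhibited there either as a linear character of $G$, or as a character induced from a linear character of $H_1=\cyc{a,u^2}$ (the $4$-dimensional ones, Theorem \ref{4dimreps}), or as one induced from a linear character of $H_2$ (the $2$-dimensional ones, Theorem \ref{2dimreps}). This already settles the final ``in particular'' clause.

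For a non-central $g$, I would simply run through the case list of Proposition \ref{centralizers}. Inspecting the three cases there, $C_G(g)$ is in every instance either abelian — this is so for $C_G(a^su^{2i})$ with $2\nmid s$ (it is $\BZ_{2^l}\times\BZ_{k/2}$), for both forms of $C_G(a^su^{2i-1})$ (namely $\BZ_{2^{l-1}}\times\BZ_k$ and the abelian group $\cyc{a^2,au}$), and for every $C_G(a^su^iv)$ (order $8$, hence $\BZ_4\times\BZ_2$ or $\BZ_2^3$) — or else isomorphic to $H_2=\cyc{a,u}$, which occurs only for $C_G(a^su^{2i})$ with $2\mid s$. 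An abelian group is trivially an $M$-group, taking $H$ to be the whole group and $\phi$ the given (necessarily linear) irreducible character. So the only remaining point is that $H_2$ is an $M$-group.

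For $H_2=\cyc{a,u}$ I would observe that $\cyc a$ is a normal subgroup, since $uau^{-1}=a^{n_1}\in\cyc a$, that it is cyclic of order $2^l$, and that the quotient $H_2/\cyc a$ is cyclic (generated by the image of $u$); thus $\{1\}\triangleleft\cyc a\triangleleft H_2$ is a normal series with cyclic factors, so $H_2$ is supersolvable (indeed metacyclic). By the classical theorem that supersolvable groups are monomial (see, e.g., \cite[Ch.~6]{I}), $H_2$ is an $M$-group, which finishes the argument.

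I do not expect a genuine obstacle: the proof is essentially bookkeeping over Proposition \ref{centralizers}, with the one external input being the standard fact that supersolvable (or even metabelian) groups are $M$-groups. The only points needing any care are confirming, case by case, that the centralizers are exactly as stated — abelian or a copy of $H_2$ — and, in the $H_2$ case, checking that the displayed normal series really is a supersolvable series, which is immediate from the presentation. If one prefers, $H_2$ (being metacyclic) could instead be handled by invoking the monomiality of metabelian groups directly.
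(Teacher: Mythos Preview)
Your proposal is correct and follows essentially the same route as the paper: handle $g\in Z(G)$ by the explicit representation theory of $G$ (Theorems \ref{4dimreps}, \ref{2dimreps}, \ref{1dimreps}), and for non-central $g$ run through the case list of Proposition \ref{centralizers}, observing that every centralizer is abelian except $H_2=\cyc{a,u}$.

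The only genuine difference is in the treatment of $H_2$. The paper notes that $H_2\cong\BZ_{2^l}\rtimes_{n_1}\BZ_{2\cdot k/2}$ is one of the groups studied in \cite{K} and cites that paper for its monomiality. You instead argue directly that $H_2$ is metacyclic (hence supersolvable) and invoke the classical theorem that supersolvable groups are monomial. Your argument is more self-contained and avoids the external dependence on \cite{K}; the paper's version is shorter given that \cite{K} is already being cited throughout. Both are perfectly fine, and your supersolvability check via the normal series $\{1\}\triangleleft\cyc{a}\triangleleft H_2$ is correct from the presentation.
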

\begin{proof}
  By Theorems \ref{4dimreps}, \ref{2dimreps}, and \ref{1dimreps}, all irreducible characters of $G$ can be obtained by inducing a linear character of some subgroup of $G$.  By Proposition \ref{centralizers}, the centralizers of non-central elements are either abelian or are isomorphic to one of the groups considered in \cite{K}.  Abelian groups are clearly $M$-groups, and it was shown in \cite{K} that the groups considered there are also $M$-groups.
\end{proof}
In fact, a somewhat stronger condition holds, as every irreducible character for $C_G(g)$ can be induced from a linear character of some \textit{normal} subgroup.


\section{Indicators of $\G$}\label{groupindsect}
Now that we know the complete representation theory of our groups, we can proceed to compute their higher indicators.  The easiest case is, of course, the linear characters.

\begin{thm}\label{1dimindic}
  Let $G=\G$ be as in Definition \ref{presentation}.  Suppose $\phi$ is a 1-dimensional character of $G$.  Then
  $$\nu_m(\phi) = \left\{\begin{array}{cll}
    1 &;& 2\mid m \vee \phi = 1\\
    0 &;& 2\nmid m \wedge \phi \neq 1
  \end{array}\right.$$
  In particular, $\nu_2(\phi)=1$ for every such $\phi$.
\end{thm}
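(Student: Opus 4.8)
The plan is to reduce everything to the classical higher Frobenius--Schur indicator formula for a group algebra and then exploit the fact, established in Theorem \ref{1dimreps}, that every linear character of $G$ has order dividing $2$. Concretely, for $H=kG$ the normalized integral of Definition \ref{indicdef} is $\Lambda=\frac{1}{|G|}\sum_{g\in G}g$, and since $\Delta(g)=g\otimes g$ for each $g\in G$, the iterated coproduct of $g$ is $g^{\otimes m}$ and so the Sweedler power is $\Lambda^{[m]}=\frac{1}{|G|}\sum_{g\in G}g^m$. Hence for any character $\chi$ of $G$ we obtain $\nu_m(\chi)=\frac{1}{|G|}\sum_{g\in G}\chi(g^m)$. (Alternatively, one may view $\phi$ as the $\D(G)$-module $M(1,\phi)$ and invoke Lemma \ref{FxF}.i with $x=1\in Z(G)$, which identifies $\nu_m$ of this module with $\nu_m$ of $\phi$ as a $G$-module, landing on the same formula.)

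First I would specialize this to a $1$-dimensional $\phi$. Since $\phi$ is multiplicative, $\phi(g^m)=\phi(g)^m=\phi^m(g)$, where $\phi^m$ denotes the $m$-th power character, again a linear character of $G$. Therefore $\nu_m(\phi)=\frac{1}{|G|}\sum_{g\in G}\phi^m(g)=\ip{\phi^m}{\mathbf{1}}$, which equals $1$ if $\phi^m=\mathbf{1}$ and $0$ otherwise, by orthogonality of characters (the sum over $G$ of a nontrivial linear character vanishes).

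Next I would determine when $\phi^m=\mathbf{1}$. By Theorem \ref{1dimreps} the linear characters of $G$ are exactly those factoring through $G/\cyc{a^2,u^2}\cong\BZ_2^3$, so each satisfies $\phi^2=\mathbf{1}$; in particular a nontrivial $\phi$ has order exactly $2$. Consequently $\phi^m=\mathbf{1}$ precisely when $\phi=\mathbf{1}$, or when $\phi\neq\mathbf{1}$ and $m$ is even, which is exactly the claimed case split. Taking $m=2$ then gives $\nu_2(\phi)=1$ for every linear character $\phi$.

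There is no genuine obstacle here: the only points requiring care are the derivation of the group-algebra formula $\nu_m(\chi)=|G|^{-1}\sum_{g\in G}\chi(g^m)$ from Definition \ref{indicdef}, and the standard orthogonality fact $|G|^{-1}\sum_{g\in G}\psi(g)=\delta_{\psi=\mathbf{1}}$ for a linear character $\psi$; everything else is immediate from Theorem \ref{1dimreps}.
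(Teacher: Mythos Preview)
Your proof is correct and follows essentially the same route as the paper: rewrite $\nu_m(\phi)=\frac{1}{|G|}\sum_{g\in G}\phi(g^m)=\ip{\phi^m}{1}$ using multiplicativity of the linear character, then invoke Theorem~\ref{1dimreps} to conclude that $\phi$ has order dividing $2$. The only difference is that you spell out the derivation of the group-algebra indicator formula from Definition~\ref{indicdef}, which the paper takes as understood.
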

\begin{proof}
  For any such $\phi$, we have
  $$\frac{1}{|G|}\sum_{g\in G}\phi(g^m) = \frac{1}{|G|}\sum_{g\in G}\phi^m(g) = \ip{\phi^m}{1}.$$
  By Theorem \ref{1dimreps}, $\phi$ is either the identity or has order 2.  The result follows.
\end{proof}

\begin{thm}\label{4dimindic}
    Let $\varphi_{r,t}^G$ be an irreducible 4-dimensional $G$-module, as given in Theorem \ref{4dimreps}.  Then
    \begin{eqnarray*}
    \nu_m(\varphi_{r,t}^G) &=& \ip{\varphi_{mr,mt}}{1} + \delta_{2\mid m} (-1)^{m/2}\left(2\ip{\varphi_{mr,\frac{m}{2}t}}{1} - \ip{\varphi_{mr,mt}}{1}\right) + \delta_{2\mid m} + \delta_{4\mid m}\\
    &=& \left\{ \begin{array}{cll}
        4 &;& 2^{l-1}\mid m \wedge k\mid mt\\
        2 &;& 4\mid m \wedge (2^{l-1}\nmid m \vee k\nmid t)\\
        1 &;& 2\mid m \wedge 4\nmid m\\
        0 &;& 2\nmid m \end{array}\right..
    \end{eqnarray*}
    In particular, $\nu_2(\varphi_{r,t}^G)=1$ and $\nu_m(\varphi_{r,t}^G)\geq 0$ for every $m$.
\end{thm}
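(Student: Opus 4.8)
The plan is to compute $\nu_m(\varphi_{r,t}^G)=\frac{1}{|G|}\sum_{g\in G}\varphi_{r,t}^G(g^m)$ using that $\varphi_{r,t}^G$ is induced from the \emph{normal} subgroup $H_1=\cyc{a,u^2}$ (of index $4$ by Theorem \ref{4dimreps}), so $\varphi_{r,t}^G$ vanishes off $H_1$. Starting from the standard formula $\varphi_{r,t}^G(x)=\frac{1}{|H_1|}\sum_{g:\,g^{-1}xg\in H_1}\varphi_{r,t}(g^{-1}xg)$, I would substitute $y=g^{-1}xg$ and sum over $g$ first; the double sum then collapses to
$$
\nu_m(\varphi_{r,t}^G)\;=\;\frac{1}{|H_1|}\sum_{\substack{g\in G\\ g^m\in H_1}}\varphi_{r,t}(g^m),
$$
where now $\varphi_{r,t}$ is the linear character itself and $\varphi_{r,t}(g^m)=\varphi_{mr,mt}(g)$ for $g\in H_1$. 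I would then split the sum over the four cosets of $H_1$, represented by $1,u,v,uv$: these consist of the $a^su^i$ with $i$ even, the $a^su^i$ with $i$ odd, the $a^su^iv$ with $i$ even, and the $a^su^iv$ with $i$ odd.

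For each coset I evaluate $g^m$ via Corollary \ref{basicident}. For $i$ even, $u^2$ commutes with $a$, so $(a^su^i)^m=a^{ms}u^{mi}\in H_1$ always. For $i$ odd, $\sum_{j=0}^{m-1}n_1^{ji}\equiv\sum_{j=0}^{m-1}n_1^j\pmod{2^l}$ since $n_1^2\equiv1$, and this depends only on the parity of $m$; hence $g^m\in H_1$ exactly when $m$ is even, where $(a^su^i)^m=a^{sm(1+2^{l-2})}u^{mi}$. For $g=a^su^iv$, Corollary \ref{basicident}.vii gives $g^2\in\cyc{a^{2^{l-1}}}\subseteq Z(G)$, so $g^m=(g^2)^{m/2}\in H_1$ exactly when $m$ is even, and it equals $1$ or $a^{2^{l-1}}$ according to the parities of $s,i$ and $m\bmod 4$. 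In particular, for $m$ odd only the first coset contributes and $\varphi_{r,t}(a^{ms}u^{mi})=\zeta_{2^l}^{mrs}\zeta_{k/2}^{mti/2}$ sums to $0$ over $s\in\BZ_{2^l}$ because $mr$ is odd; so $\nu_m(\varphi_{r,t}^G)=0$ when $m$ is odd.

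For $m$ even I compute the three surviving coset sums. The $H_1$-sum is $\ip{\varphi_{mr,mt}}{1}$ by definition. The $uH_1$-sum is $\sum_{s}\sum_{i\ \mathrm{odd}}\zeta_{2^l}^{rsm(1+2^{l-2})}\zeta_{k/2}^{tmi/2}$: the $s$-sum is $2^l$ when $2^l\mid m$ and $0$ otherwise (since $r(1+2^{l-2})$ is odd), while the twisted sum over odd $i$ contributes a factor $\zeta_{k/2}^{tm/2}$, which equals $(-1)^{2tm/k}$ when $(k/2)\mid tm$; after normalizing, this is exactly $(-1)^{m/2}\bigl(2\ip{\varphi_{mr,\frac m2 t}}{1}-\ip{\varphi_{mr,mt}}{1}\bigr)$, the sign being harmless because the term vanishes unless $2^l\mid m$ (and then $4\mid m$). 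The $vH_1\cup uvH_1$-sum is a counting argument: among the $2^lk$ elements $a^su^iv$, exactly $2^{l-2}k$ have $s$ odd and $i$ even—for which $g^m=a^{2^{l-1}}$, $\varphi_{r,t}(g^m)=-1$ when $m\equiv2\bmod 4$ and $g^m=1$ otherwise—while the remaining $3\cdot2^{l-2}k$ give $g^m=1$; dividing by $|H_1|=2^{l-1}k$ this evaluates to $\delta_{2\mid m}+\delta_{4\mid m}$. Adding the three pieces yields the first displayed formula, and substituting $\ip{\alpha^{mr}\ot\beta^{j}}{1}=\delta_{2^l\mid mr}\,\delta_{(k/2)\mid j}$ and sorting by $m\bmod 4$ reduces it to the stated closed form; in particular $\nu_2(\varphi_{r,t}^G)=1$ and every value lies in $\{0,1,2,4\}$, so $\nu_m(\varphi_{r,t}^G)\ge0$.

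I expect the main obstacle to be the $uH_1$ coset: one must correctly track the exponent $\sum_{j}n_1^{ji}$ for odd $i$ (using $n_1^2\equiv1$) and then evaluate the twisted geometric sum $\sum_{i\ \mathrm{odd}}\zeta_{k/2}^{tmi/2}$, which is precisely where the sign $(-1)^{m/2}$ and the combination $2\ip{\varphi_{mr,\frac m2 t}}{1}-\ip{\varphi_{mr,mt}}{1}$—that is, the distinction between the conditions $(k/2)\mid tm$ and $k\mid tm$—enter. Everything else is routine bookkeeping with Corollary \ref{basicident} and the orthogonality relations on $\BZ_{2^l}$ and $\BZ_{k/2}$.
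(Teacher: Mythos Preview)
Your approach is correct and essentially the same as the paper's: both split the sum over $G$ according to the element types $a^su^{2i}$, $a^su^{2i-1}$, $a^su^iv$ (your cosets $H_1$, $uH_1$, $vH_1\cup uvH_1$), compute $m$-th powers via Corollary~\ref{basicident}, and reduce to orthogonality on $\BZ_{2^l}\times\BZ_{k/2}$. Your one genuine simplification is the Frobenius-type identity $\nu_m(\varphi_{r,t}^G)=\frac{1}{|H_1|}\sum_{g^m\in H_1}\varphi_{r,t}(g^m)$, which lets you evaluate the \emph{linear} character $\varphi_{r,t}$ rather than the induced character $\varphi_{r,t}^G$; the paper works with $\varphi_{r,t}^G$ directly and so carries the four conjugate summands along. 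One small point: your $uH_1$-contribution actually comes out to $2\ip{\varphi_{mr,\frac{m}{2}t}}{1}-\ip{\varphi_{mr,mt}}{1}$ with no sign (the factor $(-1)^{2tm/k}$ is not $(-1)^{m/2}$ in general), so you are not \emph{deriving} the $(-1)^{m/2}$ in the stated formula but rather observing that it is irrelevant since the bracket vanishes unless $2^l\mid m$; that reconciliation is fine, but phrase it that way.
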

\begin{proof}
  Suppose for now that the stated formula is valid.  Since $2\nmid r$ by Theorem \ref{4dimreps}, we observe that that $2\nmid m \Rightarrow \nu_m(\varphi_{r,t}^G) = 0$.  If $4\mid m$, then
  \begin{eqnarray*}
    \nu_m(\varphi_{r,t}^G) = 2 + 2\ip{\varphi_{mr,\frac{m}{2}t}}{1} \in\{2,4\}.
  \end{eqnarray*}
  Indeed, in this case $\nu_m(\varphi_{r,t}^G)=4$ $\iff$ $2^{l-1}\mid m \wedge k\mid mt$.   Additionally, if $2\mid m$ but $4\nmid m$ then
  \begin{eqnarray*}
    \nu_m(\varphi_{r,t}^G) = 2\ip{\varphi_{mr,mt}}{1} - 2 \ip{\varphi_{mr,\frac{m}{2}t}}{1} + 1 = 1\\
  \end{eqnarray*}
  where we have again used the fact that $2\nmid r$ to conclude that the inner products are all 0.  In particular, $\nu_2(\varphi_{r,t}^G)=1$.  This establishes the last two claims in the theorem.

  To establish the formula, first recall that $\varphi_{r,t}^G(x)=0$ whenever $x\not\in H=\cyc{a,u^2}$.  Therefore, we need to compute when $m$-th powers of elements of $G$ are in $H$.  This is trivially true for all $m$ for elements of the form $a^s u^{2i}$.  By the relations of the group, $(a^s u^{2i-1})^m \in H \iff 2\mid m$, and by Corollary \ref{basicident} $(a^s u^i v)^m\in H \iff 2\mid m$.

  We thus have
  \begin{eqnarray*}
    \nu_m(\varphi_{r,t}^G) &=& \frac{1}{|G|} \sum_{s=0}^{2^l-1}\left( \sum_{i=0}^{k/2-1} \left(\varphi_{r,t}^G((a^s u^{2i})^m) + \varphi_{r,t}^G((a^s u^{2i-1})^m)\right) + \sum_{i=0}^{k-1} \varphi_{r,t}^G((a^s u^i v)^m)\right)\\
    &=& \frac{1}{|G|}\left( \delta_{2\mid m} 2^{l-2}k \varphi_{r,t}^G(a^{2^{l-2}m}) + \delta_{2\mid m} 3\cdot 2^{l-2}k\varphi_{r,t}^G(1)\ + \ \right.\\
    && \indent +\sum_{s=0}^{2^{l-1}}\sum_{i=0}^{k/2-1}\left( \varphi_{r,t}^G(a^{ms}u^{2mi}) + \delta_{2\mid m} \varphi_{r,t}^G(a^{2^{l-2}m} a^{ms} u^{m(2i-1)})\right)\Big)\\
    &=& \delta_{2\mid m}\left(\frac{3}{2}+(-1)^{m/2}\frac{1}{2}\right) + 2\ip{\varphi_{mr,mt}}{1} + \sum_{s=1}^{2^l} \sum_{i=1}^{k/2} \varphi_{r,t}^G(a^{ms}u^{2mi-m})\\
    &=& \delta_{2\mid m} + \delta_{4\mid m} + 2\ip{\varphi_{mr,mt}}{1}\ + \\
    && \indent +\delta_{2\mid m}(-1)^{m/2}\frac{1}{|G|}\sum_{s=1}^{2^l} \left(  \sum_{i=1}^k\varphi_{r,t}^G(a^{ms} u^{\frac{m}{2}2i}) - \sum_{i=1}^{k/2}\varphi_{r,t}^G(a^{ms} u^{\frac{m}{2}4i})\right)\\
    &=& \ip{\varphi_{mr,mt}}{1} + \delta_{2\mid m} (-1)^{m/2}\left(2\ip{\varphi_{mr,\frac{m}{2}t}}{1} - \ip{\varphi_{mr,mt}}{1}\right) + \delta_{2\mid m} + \delta_{4\mid m},
  \end{eqnarray*}
  which is the desired formula.
\end{proof}

\begin{thm}\label{2dimindic}
    Let $\psi_{r,t}^G$ be an irreducible 2-dimensional representation of $G$, as given in Theorem \ref{2dimreps}.  Then
    $$\nu_m(\psi_{r,t}^G) = \ip{\psi_{mr,mt}}{1} + \delta_{2\mid m} = \left\{ \begin{array}{cll}
        2 &;& 2\mid m \wedge \psi_{mr,mt}=1\\
        1 &;& 2\mid m \wedge \psi_{mr,mt}\neq 1\\
        0 &;& 2\nmid m
    \end{array}\right..$$
    In particular, $\nu_2(\psi_{r,t}^G)=1$ and $\nu_m(\psi_{r,t}^G)\geq 0$ for every $m$.
\end{thm}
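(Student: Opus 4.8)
The plan is to follow the template of Theorem~\ref{4dimindic}, exploiting that $\psi_{r,t}^G$ is induced from a linear character $\phi_{r,t}$ of the normal subgroup $H_2=\cyc{a,u}$, which has index $2$ in $G$ by Proposition~\ref{centralizers}.i (concretely $G=H_2\cup H_2v$). From the proof of Theorem~\ref{2dimreps} we have $\psi_{r,t}^G(x)=\phi_{r,t}(x)+\phi_{r,t}(x^{-1})$ for $x\in H_2$ and $\psi_{r,t}^G(x)=0$ otherwise; combined with the group-algebra formula $\nu_m(\chi)=\frac1{|G|}\sum_{g\in G}\chi(g^m)$ already used in Theorem~\ref{1dimindic}, this reduces the whole computation to deciding, for each $g\in G$, whether $g^m\in H_2$ and, when it is, evaluating $\phi_{r,t}(g^{\pm m})$.

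First I would dispose of the membership question: for $g=a^su^i$ one has $g^m\in H_2$ for every $m$, while for $g=a^su^iv$ Corollary~\ref{basicident}.vii gives $(a^su^iv)^2\in\cyc{a^{2^{l-1}}}\subseteq H_2$ and any odd power lies in the non-trivial coset $H_2v$, so $g^m\in H_2\iff 2\mid m$ and then $g^m=(g^2)^{m/2}\in\cyc{a^{2^{l-1}}}$. The key observation is then that a linear character kills $[H_2,H_2]=\cyc{a^{2^{l-1}}}$, so $\phi_{r,t}$ factors through $H_2/\cyc{a^{2^{l-1}}}\cong\BZ_{2^{l-1}}\times\BZ_k$; since $n_1\equiv1\bmod2^{l-1}$ this makes the $D_k$-twisting invisible to $\phi_{r,t}$. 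Concretely, for $g=a^su^i$ one gets $g^m=a^{sc}u^{mi}$ with $c\equiv m\bmod2^{l-1}$ (one checks $c=m$ for $i$ even and $c=\sum_{j=0}^{m-1}n_1^{ij}\equiv m$ for $i$ odd), hence $\phi_{r,t}(g^m)=\phi_{mr,mt}(g)$; and for $g\in H_2v$ with $m$ even, $g^m\in\cyc{a^{2^{l-1}}}\subseteq\ker\phi_{r,t}$, so $\psi_{r,t}^G(g^m)=1+1=2$. Summing, the $H_2$-part contributes $\frac1{|G|}\sum_{x\in H_2}\bigl(\phi_{mr,mt}(x)+\overline{\phi_{mr,mt}(x)}\bigr)=\frac{2|H_2|}{|G|}\ip{\psi_{mr,mt}}{1}=\ip{\psi_{mr,mt}}{1}$ (using $[G:H_2]=2$ and that $\overline{\phi_{mr,mt}}=\phi_{-mr,-mt}$ is trivial exactly when $\phi_{mr,mt}$ is), while for even $m$ the coset $H_2v$ contributes $\frac1{|G|}\cdot2\cdot|H_2|=1$. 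This gives $\nu_m(\psi_{r,t}^G)=\ip{\psi_{mr,mt}}{1}+\delta_{2\mid m}$.

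It then remains to read off the tabulated values. Since $\psi_{mr,mt}=1\iff 2^{l-1}\mid mr\wedge k\mid mt$, for even $m$ one immediately gets $\nu_m\in\{1,2\}$, equal to $2$ precisely when $\psi_{mr,mt}=1$; in particular $\nu_2=\ip{\psi_{2r,2t}}{1}+1=1$ because $\psi_{2r,2t}=\psi_{r,t}^2\neq1$ by the irreducibility criterion of Theorem~\ref{2dimreps}, which also yields $\nu_m\geq0$ for all $m$. The step I expect to need the most care is the odd-$m$ case, i.e.\ showing $\ip{\psi_{mr,mt}}{1}=0$ when $2\nmid m$: in the $4$-dimensional setting irreducibility forced $2\nmid r$, so $mr\not\equiv0\bmod2^{l}$ outright, whereas here irreducibility only gives the weaker constraint that $2^{l-2}\mid r$ and $\tfrac k2\mid t$ fail to hold simultaneously, so one must argue that for odd $m$ the congruences $2^{l-1}\mid mr$ and $k\mid mt$ are incompatible with that constraint. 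That comparison, together with the parity bookkeeping behind $c\equiv m\bmod2^{l-1}$, is the only non-routine part of the argument.
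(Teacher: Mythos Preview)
Your derivation of the formula $\nu_m(\psi_{r,t}^G)=\ip{\psi_{mr,mt}}{1}+\delta_{2\mid m}$ is correct and follows exactly the paper's approach: split $G=H_2\cup H_2v$, use that $\psi_{r,t}^G$ vanishes off $H_2$, and exploit $\phi_{r,t}(a^{2^{l-1}})=1$. Your observation that $\phi_{r,t}$ factors through $H_2/\cyc{a^{2^{l-1}}}$ packages the parity bookkeeping a bit more cleanly than the paper's explicit case split, but the content is the same. The deduction of $\nu_2=1$ (from $\psi_{r,t}^2\neq1$) and of $\nu_m\geq0$ is also identical to what the paper does.

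The concern you raise about the odd-$m$ row of the table is exactly right, and in fact that row is \emph{false} as stated. Take $l=3$, $k=12$, $(r,t)=(0,4)$: then $\psi_{0,4}^2=\psi_{0,8}\neq1$, so $\psi_{0,4}^G$ is irreducible by Theorem~\ref{2dimreps}, yet $\psi_{3\cdot0,\,3\cdot4}=\psi_{0,0}=1$ and hence $\nu_3=1\neq0$. More generally, for odd $m$ the condition $2^{l-1}\mid mr$ forces $r=0$, but then $k\mid mt$ can certainly hold for $t\notin\{0,k/2\}$ whenever $k$ has an odd prime factor. The paper's own proof does not attempt to verify this row either: it establishes only the closed formula and the ``In particular'' claims, exactly as you do. So your argument is complete for everything the paper actually proves; the issue you flagged is an error in the statement, not a missing step.
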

\begin{proof}
  The last two claims are immediate from the formula and the fact that $\psi_{r,t}$ does not have order two by the proof of Theorem \ref{2dimreps}.

  To obtain the formula, we proceed as we did in the proof of Theorem \ref{4dimindic}.  Things are a bit simpler here since $\psi_{r,t}(a^{2^{l-1}})=1$.  We have
  \begin{eqnarray*}
    \nu_m(\psi_{r,t}^G) &=& \frac{1}{|G|}\sum_{s=1}^{2^l}\sum_{i=1}^k \left( \psi_{r,t}^G((a^s u^i)^m) + \psi_{r,t}^G((a^s u^i v)^m)\right)\\
    &=& \frac{1}{|G|}\Big( \delta_{2\mid m}\psi_{r,t}(a^{2^{l-1}\frac{m}{2}})2^{l-1}k + \delta_{2\mid m}3\cdot2^{l-1}k\ + \\
    &&\indent +\sum_{s=1}^{2^l} \sum_{i=1}^{k/2}\left( \psi_{r,t}^G(a^{ms}u^{2mi}) + \psi_{r,t}^G(a^{2^{l-1}\frac{m}{2}}a^{ms} u^{m(2i-1)})\right)\Big)\\
    &=& \delta_{2\mid m} + \frac{1}{|G|} \sum_{s=1}^{2^l}\sum_{i=1}^{k} \psi_{r,t}^G(a^{ms} u^{mi})\\
    &=& \delta_{2\mid m} + \ip{\psi_{mr,mt}}{1},
  \end{eqnarray*}
  as desired.
\end{proof}

Combing our results so far, we get the following.

\begin{thm}\label{groupsummary}
    Let $G\cong\G$, with $\G$ as in Definition \ref{presentation}.  Then the following hold:
        \begin{enumerate}
            \item $G$ is completely real.
            \item $G$ is generated by involutions.
            \item $\forall g\in G$ $C_G(g)$ is an $M$-group.  In particular, $G$ is an $M$-group.
            \item $G$ is totally orthogonal.
            \item $\nu_m(V)\in\BN\cup\{0\}$ for every $m$ and every $G$-module $V$.
        \end{enumerate}
\end{thm}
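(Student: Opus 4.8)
The plan is to treat Theorem \ref{groupsummary} as a summary that collects the work of the two preceding sections, dispatching each clause by reference. Clauses (i), (ii), and (iii) have already been proved: (i) is Corollary \ref{completelyreal}, (ii) is Corollary \ref{invgen}, and (iii) is Corollary \ref{mgroups}. So I would only need to argue (iv) and (v), and both hinge on the fact, established in Theorem \ref{1dimreps}, that every irreducible character of $G$ lies in one of exactly three families: the eight linear characters, the two-dimensional characters $\psi_{r,t}^G$ of Theorem \ref{2dimreps}, and the four-dimensional characters $\varphi_{r,t}^G$ of Theorem \ref{4dimreps}.

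For (iv), I would first recall that a finite group is \emph{totally orthogonal} exactly when every irreducible complex character $\chi$ has Frobenius--Schur indicator $\nu_2(\chi)=1$ --- equivalently, every simple module is self-dual and carries a symmetric invariant bilinear form. Then I would simply read off the ``in particular'' clauses of Theorems \ref{1dimindic}, \ref{2dimindic}, and \ref{4dimindic}: each asserts $\nu_2=1$ on the corresponding family of irreducibles. Since Theorem \ref{1dimreps} guarantees these families exhaust $\widehat G$, this gives $\nu_2(\chi)=1$ for all irreducible $\chi$, i.e.\ clause (iv).

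For (v), I would use that the indicator $\nu_m$ is a linear functional on characters ($\nu_m(\chi)=\frac{1}{|G|}\sum_{g\in G}\chi(g^m)$ for a $G$-module), hence additive over direct sums; by Maschke's theorem over $\BC$ it therefore suffices to check $\nu_m(V)\geq 0$ for $V$ simple. Each of Theorems \ref{1dimindic}, \ref{2dimindic}, and \ref{4dimindic} explicitly records that $\nu_m\geq 0$ for every $m$ on its family, and again Theorem \ref{1dimreps} says those families are everything, so $\nu_m(V)\in\BN\cup\{0\}$ for every $m$ and every $G$-module $V$.

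I do not expect a genuine obstacle here: the substance was already carried out in the character-table computation (Proposition \ref{classes}, Theorems \ref{4dimreps}--\ref{1dimreps}) and in the three indicator formulas. The only points to be careful about are using the form of ``totally orthogonal'' that reduces cleanly to the $\nu_2=1$ condition on irreducibles, and pointing out explicitly that the three families of irreducibles are mutually non-isomorphic and complete --- but that completeness is exactly the content of Theorem \ref{1dimreps}, obtained by matching the count of induced characters against the conjugacy-class count of Proposition \ref{classes}.
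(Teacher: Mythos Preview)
Your proposal is correct and follows essentially the same approach as the paper: the paper's proof simply cites Corollary~\ref{completelyreal}, Corollary~\ref{invgen}, and Corollary~\ref{mgroups} for parts (i)--(iii), and then says parts (iv) and (v) follow immediately from Theorems~\ref{1dimindic}, \ref{2dimindic}, and \ref{4dimindic}. Your version is slightly more explicit about invoking Theorem~\ref{1dimreps} for completeness and about the reduction to simple modules via additivity of $\nu_m$, but the substance is identical.
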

\begin{proof}
  Part i) is Corollary \ref{completelyreal}.  Part ii) is Corollary \ref{invgen}.  Part iii) is Corollary \ref{mgroups}.  Parts iv) and v) follow immediately from Theorems \ref{1dimindic}, \ref{2dimindic}, and \ref{4dimindic}
\end{proof}

\section{The sets $G_m(x)$}\label{gmsect}
We continue to let $G=\G$ be as in Definition \ref{presentation}.  We wish now to proceed to compute the indicators for the irreducible modules over $\D(G)$.  We use the notation of Proposition \ref{MOrho} to denote the irreducible modules over $\D(G)$.  By Proposition \ref{FxF}, the indicators for a module $M(g,\rho)$ with $g\in Z(G)$ are entirely determined by the order of $g$ and the indicators of the $G$-module given by $\rho$, which we have already computed in the previous section.  Thus we will subsequently focus on the indicators of modules corresponding to non-singleton conjugacy classes.

By Corollary \ref{cor34}, we will need to compute the sets $G_m(x)$ from Definition \ref{Gmsetdef}.  Determining these sets is the goal of this section.  The remaining sections are dedicated to computing the indicator for a particular "type" of module, which we describe now.

\begin{df}\label{typedef}
  Let $G=\G$ be as in Definition \ref{presentation}.  Fix $g\in G$ with $g\not\in Z(G)$. Let $V=M(\class(g),\eta)$ be any irreducible $\D(G)$-module, as in Proposition \ref{MOrho}.
  \begin{enumerate}
    \item We say that $V$ is Type I if $g=a^s u^{2i}$.
    \item We say that $V$ is Type II if $g=a^s u^{2i-1}$.
    \item We say that $V$ is Type III if $g=a^s u^i v$
  \end{enumerate}
  For Type I modules, we further say that $V$ is even or odd according to whether $s$ is even or odd.
\end{df}

Before we compute the membership of the sets $G_m(x)$, we start off with a preliminary lemma that will assist our computation.
\begin{lem}\label{gmsetlem}
    Let $G=\G$ be as in Definition \ref{presentation}. Then
     $$(a^x u^y v)^j = a^{\left( \ceil{\frac{j}{2}}+\floor{\frac{j}{2}}n_1^y n_2\right)x} u^{\delta_{2\nmid j}\ y} v^j.$$
\end{lem}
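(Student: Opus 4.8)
The plan is to prove the identity $(a^x u^y v)^j = a^{(\ceil{j/2}+\floor{j/2}n_1^y n_2)x} u^{\delta_{2\nmid j}\, y} v^j$ by induction on $j$, computing the step $(a^x u^y v)^{j+1} = (a^x u^y v)^j\cdot(a^x u^y v)$ and moving the trailing $v^j$ past $a^x u^y v$ using the defining relations $vav=a^{n_2}$, $vuv=u^{-1}$. The base cases $j=0$ and $j=1$ are trivial (for $j=1$ one checks $\ceil{1/2}+\floor{1/2}n_1^y n_2 = 1$ and $\delta_{2\nmid 1}=1$). For the inductive step it is cleanest to split according to the parity of $j$, since the exponent of $u$ and the conjugating effect of $v^j$ both depend on $j\bmod 2$.

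First I would record the auxiliary fact that $v^j a^t u^{y'} v^{-j} = a^{n_2^j t}u^{(-1)^j y'}$ for any $t,y'$, which follows immediately from the relations by induction on $j$ (or is already implicit in equation (\ref{genconj})). Using this with $j$ even, so $v^j$ acts trivially by conjugation and $\delta_{2\nmid j}=0$, the product becomes $a^{(\ceil{j/2}+\floor{j/2}n_1^y n_2)x}\cdot a^x u^y v^{j+1}$; since $j$ even gives $\ceil{j/2}=\floor{j/2}=j/2$, the new $a$-exponent is $(j/2 + (j/2)n_1^y n_2 + 1)x = (\ceil{(j+1)/2} + \floor{(j+1)/2}n_1^y n_2)x$ because for $j+1$ odd we have $\ceil{(j+1)/2}=j/2+1$ and $\floor{(j+1)/2}=j/2$, and the $u$-exponent is $y=\delta_{2\nmid j+1}\,y$, as required. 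For $j$ odd, $v^j$ conjugates $a^x u^y$ to $a^{n_2 x}u^{-y}$, and here $\delta_{2\nmid j}=1$, so the product is $a^{(\ceil{j/2}+\floor{j/2}n_1^y n_2)x}u^y\cdot a^{n_2 x}u^{-y}v^{j+1}$. Then push $u^y$ past $a^{n_2 x}$ using $u a u^{-1}=a^{n_1}$, i.e.\ $u^y a^{n_2 x} u^{-y} = a^{n_1^y n_2 x}$, collapsing the $u$-part to $u^0$ (matching $\delta_{2\nmid j+1}=0$) and giving $a$-exponent $(\ceil{j/2}+\floor{j/2}n_1^y n_2 + n_1^y n_2)x$; since $j$ odd gives $\ceil{j/2}=\floor{j/2}+1=(j+1)/2$ and $\floor{j/2}=(j-1)/2$, while for $j+1$ even $\ceil{(j+1)/2}=\floor{(j+1)/2}=(j+1)/2$, one checks $\ceil{j/2}+\floor{j/2}n_1^y n_2+n_1^y n_2 = (j+1)/2 + ((j-1)/2+1)n_1^y n_2 = \ceil{(j+1)/2}+\floor{(j+1)/2}n_1^y n_2$.

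The only real bookkeeping obstacle is keeping the floor/ceiling arithmetic straight across the parity switch from $j$ to $j+1$ — i.e.\ verifying the two identities $\ceil{(j+1)/2}=\ceil{j/2}+\delta_{2\mid j}$ and $\floor{(j+1)/2}=\floor{j/2}+\delta_{2\nmid j}$ and plugging them in — but this is elementary. Everything else is a direct application of the group relations in Definition \ref{presentation} together with Corollary \ref{basicident}, so no genuine difficulty arises beyond careful case-tracking.
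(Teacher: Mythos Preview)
Your induction is correct and the case-split on parity of $j$ is handled cleanly; the floor/ceiling bookkeeping checks out.

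The paper's route is a bit different and shorter. Rather than inducting on $j$, it simply observes that the lemma is equivalent to Corollary~\ref{basicident}.vii: that corollary says $(a^x u^y v)^2$ lies in $\{1,a^{2^{l-1}}\}\subset Z(G)$, and in fact the formula of the lemma at $j=2$ reads $(a^x u^y v)^2=a^{(1+n_1^y n_2)x}$, which unwinds (via Corollary~\ref{basicident}.i--ii) to exactly the case description in Corollary~\ref{basicident}.vii. Once you know the square is central, the general $j$ is immediate: writing $j=2\floor{j/2}+\delta_{2\nmid j}$ gives $(a^x u^y v)^j=\bigl((a^x u^y v)^2\bigr)^{\floor{j/2}}(a^x u^y v)^{\delta_{2\nmid j}}$, and no further commutation is needed. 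Your induction recovers this without ever invoking centrality explicitly, at the cost of the extra parity case and the $u^y a^{n_2 x}u^{-y}$ push; the paper's approach trades that work for the single structural observation that the square is central.
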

\begin{proof}
  This is easily verified to be equivalent to Corollary \ref{basicident}.vii.
\end{proof}
\begin{prop}\label{gmsets}
  Let $G=\G$ be as in Definition \ref{presentation}.  Let $m\in\BN$ and define the sets $G_m(x)$ as in Definition \ref{Gmsetdef}.  Then for $i,j,r,s\in\BZ$ we have the following.
  \begin{enumerate}
    \item $a^s u^{j} \in G_m(a^r u^{i}) \iff k\mid mi \wedge 2^l\mid mr$
    \item $a^s u^j v \in G_m(a^r u^{2i}) \iff (a^r u^{2i}=1) \vee \left( 2\mid m \wedge (2\nmid j \vee 4\mid mr)\right)$
    \item $a^s u^j v \in G_m(a^r u^{2i-1}) \iff 2\mid m \wedge \left( \left(2\mid j\wedge 4\mid ms\right) \vee \left( 2\nmid j \wedge 4\mid m(r+s)\right)\right)$
    \item If $2\nmid m$ or $k\nmid 2mj$, then $a^s u^{2j}\not\in G_m(a^r u^{2i}v)$.  Else, if $2\mid m$ and $k\mid 2mj$, then
        $$a^s u^{2j} \in G_m(a^r u^{2i} v) \iff \left( 4\mid mr \wedge 2^l\mid ms\right) \vee \left(4\nmid mr \wedge s\equiv 2^{l-2}\bmod 2^{l-1}\right).$$
    \item $a^s u^{2j-1}\in G_m(a^r u^{2i} v) \iff 4\mid m \wedge k\mid m(2j-1)\wedge 2^l \mid ms$
    \item $a^s u^{2j} \in G_m(a^r u^{2i-1} v) \iff 2\mid m\wedge k\mid 2mj\wedge 2^l\mid ms$
    \item $a^s u^{2j-1}\in G_m(a^r u^{2i-1} v) \iff 4\mid m \wedge k\mid m(2j-1) \wedge 2^l \mid ms$
    \item If $2\mid j$ and $2\mid i$, then $$a^s u^j v\in G_m(a^r u^i v) \iff 2\mid m \wedge k\mid m(i-j) \wedge mr\equiv (1-2^{l-2})ms \bmod 2^l.$$
    \item If $2\nmid j$ and $2\mid i$, then $$a^s u^j v\in G_m(a^r u^i v) \iff 4\mid m \wedge k\mid m(i-j) \wedge 2^l\mid m(r-s)$$
    \item If $2\mid j$ and $2\nmid i$, then $$a^s u^j v \in G_m(a^r u^i v) \iff 4\mid m \wedge k\mid m(i-j) \wedge 2^l\mid m(r-s).$$
    \item If $2\nmid j$ and $2\nmid i$, then $$a^s u^j v\in G_m(a^r u^i v) \iff 2\mid m \wedge k\mid m(i-j) \wedge 2^l\mid m(r-s).$$
  \end{enumerate}
\end{prop}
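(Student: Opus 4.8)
The plan is to collapse the defining product into a single $m$-th power. Telescoping gives the identity
\[
\prod_{j=0}^{m-1}a^{-j}xa^{j}=(xa^{-1})^{m}a^{m},
\]
which one verifies directly (group the $a^{j}$ that ends the $j$-th factor with the $a^{-(j+1)}$ that begins the next one), or by an immediate induction on $m$. Hence $a\in G_m(x)$ if and only if $(xa^{-1})^{m}=a^{-m}$, and this is the form I would use throughout. In each item of the proposition $x$ is one of $a^{r}u^{i}$, $a^{r}u^{i}v$ and the conjugating element $a$ is one of $a^{s}u^{j}$, $a^{s}u^{j}v$, so both $xa^{-1}$ and $a$ can be rewritten in the normal form $a^{\bullet}u^{\bullet}v^{\bullet}$ using the relations of Definition \ref{presentation} and Corollary \ref{basicident}; the equation $(xa^{-1})^{m}=a^{-m}$ then amounts to a system of three congruences, namely equality of the $a$-exponents mod $2^{l}$, of the $u$-exponents mod $k$, and of the $v$-exponents mod $2$.

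To evaluate the two $m$-th powers I would first record two formulas. For a rotational element $a^{p}u^{q}\in\cyc{a,u}$ one has $(a^{p}u^{q})^{m}=a^{p\,\sigma_m(q)}u^{mq}$ with $\sigma_m(q)=\sum_{t=0}^{m-1}n_1^{tq}$; using $n_1=1+2^{l-1}$ and Corollary \ref{basicident}, $\sigma_m(q)\equiv m\bmod 2^{l}$ when $q$ is even and $\sigma_m(q)\equiv m+\floor{m/2}\,2^{l-1}\bmod 2^{l}$ when $q$ is odd. For a reflection $a^{p}u^{q}v$, Lemma \ref{gmsetlem} gives $(a^{p}u^{q}v)^{m}=a^{\left(\ceil{m/2}+\floor{m/2}n_1^{q}n_2\right)p}u^{\delta_{2\nmid m}q}v^{m}$, and Corollary \ref{basicident}.iii--vii is used to reduce the coefficient $\ceil{m/2}+\floor{m/2}n_1^{q}n_2$ mod $2^{l}$ in terms of the parities of $q$ and $m$. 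With these in hand each item is a finite (if tiresome) computation: write $xa^{-1}$ in normal form, raise it to the $m$-th power, equate with $a^{-m}$, and read off the conditions on $r,s,i,j,m$. The four groupings in the statement are the four combinations of ($x$ rotational or a reflection) with ($a$ rotational or a reflection); in (i) and (viii)--(xi) the element $xa^{-1}$ is rotational (trivially in (i), as a product of two reflections in (viii)--(xi)), so the rotational power formula applies, whereas in (ii)--(vii) it is a reflection and Lemma \ref{gmsetlem} applies. The finer case splits by the parities of the $u$-exponents and by $2\mid m$ versus $4\mid m$ are exactly those forced by the reductions of $\sigma_m$ and of the Lemma \ref{gmsetlem} coefficient.

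The main obstacle is the bookkeeping in the reflection cases (ii)--(xi), where two phenomena must be tracked carefully. First, a reflection $w$ has $w^{2}\in\{1,a^{2^{l-1}}\}$ by Corollary \ref{basicident}.vii, so $w^{m}$ can be nontrivial for a reason that disappears only once an extra factor of $2$ divides $m$; this is why so many conditions read $4\mid m$ rather than $2\mid m$, and the effect is hidden inside $\ceil{m/2}+\floor{m/2}n_1^{q}n_2$, whose residue mod $2^{l}$ depends jointly on whether $n_1^{q}n_2\equiv 1$ or $-1$ (i.e. on the parity of $q$) and on $m\bmod 4$. Second, in (viii)--(xi), where $xa^{-1}$ is rotational but $x$ and $a$ are reflections, already forming $xa^{-1}$ invokes Corollary \ref{basicident}.iii--vi to rewrite terms $a^{n_1^{\bullet}n_2^{\bullet}s}$, so the surviving $a$-exponent congruence mixes $r$ and $s$ with a parity-dependent shift by $2^{l-1}$ or $2^{l-2}$; this is the origin of the clause $mr\equiv(1-2^{l-2})ms\bmod 2^{l}$ in (viii). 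I would neutralize both difficulties by tabulating once and for all the residues mod $2^{l}$ of $\sigma_m(q)$ and of $\ceil{m/2}+\floor{m/2}n_1^{q}n_2$ as functions of the parities of $q$ and of $m\bmod 4$, and then invoking the table mechanically in each item; what remains in each case is then elementary modular arithmetic of the kind already carried out in Lemma \ref{center} and Proposition \ref{centralizers}.
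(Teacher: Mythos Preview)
Your approach is correct and genuinely different from the paper's. The paper never uses the telescoping identity $\prod_{j=0}^{m-1}g^{-j}xg^{j}=(xg^{-1})^{m}g^{m}$; instead it computes each conjugate $(a^{x}u^{y}v^{z})^{-j}(a^{r}u^{i}v^{\epsilon})(a^{x}u^{y}v^{z})^{j}$ individually via the general conjugation formula~(\ref{genconj}) and Lemma~\ref{gmsetlem}, multiplies them together, and then has to evaluate exponent sums such as $\sum_{j=0}^{m-1}(n_{2}n_{1}^{y})^{j}r$ or $\sum_{j=0}^{m-1}n_{2}^{j}(n_{2}-1)jx$ case by case. Your reduction to a single equation $(xg^{-1})^{m}=g^{-m}$ trades those running sums for two closed-form $m$-th powers (the rotational formula $\sigma_{m}(q)$ and Lemma~\ref{gmsetlem}), which is cleaner and makes the parity case splits more transparent; the paper's route is more pedestrian but has the minor advantage that it reuses the conjugation formula~(\ref{genconj}) already established in the proof of Proposition~\ref{classes}, so no new identity is needed. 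One cosmetic point: you inherit from Definition~\ref{Gmsetdef} the use of $a$ for the conjugating element, which collides with the generator $a$ of $G$; the paper silently dodges this by renaming the conjugating element as $a^{x}u^{y}v^{z}$ in its proof, and you should do the same when writing out the details.
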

\remark The condition "$4\mid m$" in parts (v),(vii),(ix), and (x) is technically superfluous.  In each case it is a consequence of the condition requiring that $k$ divide a particular expression involving $m$, and our assumption that $4\mid k$.  We explicitly state this condition here since it is useful to keep in mind when computing the values of indicators.
\begin{proof}
  \begin{enumerate}
  \item  Since $\cyc{a,u}$ is one of the groups considered in \cite{K}, this is a straightforward consequence of \cite[Prop. 4.1]{K} and Corollary \ref{basicident}.

  \item By equation (\ref{genconj}) and Lemma \ref{gmsetlem},
  $$(a^x u^y v)^{j} (a^r u^{2i}) (a^x u^y v)^j = a^{(n_2 n_1^y)^j r} u^{(-1)^j 2i}.$$
  Therefore
  \begin{eqnarray}
    \prod_{j=0}^{m-1} (a^x u^y v)^{j} (a^r u^{2i}) (a^x u^y v)^j &=& \prod_{j=0}^{m-1}a^{(n_2 n_1^y)^j r} u^{(-1)^j 2i}\nonumber\\
    &=& a^{\sum_{j=0}^{m-1} (n_2 n_1^y)^j r} u^{\sum_{j=0}^{m-1}(-1)^j2i}\nonumber\\
    &=& a^{\sum_{j=0}^{m-1} (n_2 n_1^y)^j r} u^{\delta_{2\nmid m}2i}.\label{gmeqn1}
  \end{eqnarray}
  So for this element to equal 1, we see from the power on $u$ that we must have either $2\mid m$ or $k\mid 2i$.  We consider each case to determine when the power on $a$ gives the identity element.

  If $2\nmid m$, then for $(\ref{gmeqn1})=1$ we must have
  $$\left(1+\frac{m-1}{2}(1+n_2 n_1^y)\right)r\equiv 0\bmod 2^l \iff 2^l \mid r,$$
  where the equivalence follows from the fact that $(1+n_2 n_1^y)$ is even for every $y$.  Since we also have $k\mid 2i$ if (\ref{gmeqn1})=1, it immediately follows that $a^r u^{2i} = 1$.

  So suppose now that $2\mid m$.  Then the condition for the power on $a$ in (\ref{gmeqn1}) is
  $$\frac{m}{2}(1+n_2 n_1^y)r\equiv 0\bmod 2^l.$$
  If $2\mid y$, this is equivalent to $2^{l-2}mr \equiv 0\bmod 2^l$ $\iff$ $4\mid mr$.  On the other hand, if $2\nmid y$, the equation always holds.  Combined, this gives part ii).

  \item By equation (\ref{genconj}) and Lemma \ref{gmsetlem},
  $$(a^x u^y v)^{-j} (a^s u^{2i-1}) (a^x u^y v)^j = a^{2^{l-1}\left( \ceil{\frac{j}{2}} + \floor{\frac{j}{2}} n_1^y n_2\right)x} a^{(n_2 n_1^y)^j r} u^{(-1)^j(2i-1)}.$$
  Therefore
  \begin{eqnarray*}
    \prod_{j=0}^{m-1}(a^x u^y v)^{-j} (a^s u^{2i-1}) (a^x u^y v) &=& \prod_{j=0}^{m-1} a^{2^{l-1}\left( \ceil{\frac{j}{2}} + \floor{\frac{j}{2}} n_1^y n_2\right)x} a^{(n_2 n_1^y)^j r} u^{(-1)^j(2i-1)}\\
    &=& a^{\sum_{j=0}^{m-1} 2^{l-1}jx} a^{\sum_{j=0}^{m-1} n_2^j n_1^{(y+1)j}r} u^{\sum_{j=0}^{m-1} (-1)^j(2i-1)}\\
    &=& a^{2^{l-1}\floor{\frac{m}{2}}x} a^{\sum_{j=0}^{m-1} n_2^j n_1^{(y+1)j}r} u^{\delta_{2\mid m}(2i-1)}.
  \end{eqnarray*}
  So for this element to equal 1, we must have $2\mid m$.  Supposing that $2\mid m$, we now compute the power on $a$ and when it yields the element 1.  If we had $2\mid y$, then the power becomes
  $$2^{l-1}\frac{m}{2}x + \sum_{j=0}^{m-1}(-1)^j r \equiv 2^{l-1}\frac{mx}{2}\bmod 2^l,$$
  and this is divisible by $2^l$ if and only if $4\mid mx$.  On the other hand, if $2\nmid y$, then the power becomes
  $$2^{l-1}\frac{m}{2}x + \sum_{j=0}^{m-1}n_2^j r \equiv 2^{l-1}\frac{m}{2}(r+x),$$
  and this is divisible by $2^l$ if and only if $4\mid m(x+r)$.

  \item[iv-vii)] By equation (\ref{genconj}), we have
  $$(a^x u^y)^{-j}(a^r u^i v)(a^x u^y)^j = a^{n_1^{jy}\left(r+(n_2 n_1^i -1)(\ceil{j/2}+\floor{j/2}n_1^y)x\right)} u^{i-2jy} v.$$
  For notational convenience, set $N_j= r+(n_2 n_1^i -1)(\ceil{j/2}+\floor{j/2}n_1^y)x$.  Then
  \begin{eqnarray}\label{gmeqn2}
    \prod_{j=0}^{m-1} (a^x u^y)^{-j}(a^r u^i v)(a^x u^y)^j &=& a^{\sum_{j=0}^{m-1} (n_2 n_1^i)^j n_1^{jy}N_j} u^{\sum_{j=0}^{m-1} (-1)^j (i-2jy)} v^m.
  \end{eqnarray}
  From the power on $v$, we conclude that for this element to be the identity we must have $2\mid m$.  So we suppose that $2\mid m$.  Then the power on $u$ is given by
  $$\sum_{j=0}^{m-1} (-1)^j (i-2jy) = my.$$
  Thus in order for (\ref{gmeqn2})=1 to hold we must also have $k\mid my$.  We note now that if $2\nmid y$, then this will force $4\mid m$ by assumptions on $k$.  To analyze the power on $a$, we must now break things down into cases.  We may assume, without loss of generality, that $2\mid m$ and $k\mid my$ for this analysis.

  Suppose first that $2\mid y$.  Then the power on $a$ is given by
  $$\frac{m}{2}(n_2 n_1^i+1)r + \sum_{j=0}^{m-1} (n_2 n_1^i)^j(n_2 n_1^i -1)jx.$$
  If $2\nmid i$, this simplifies to
  $$2x\sum_{j=0}^{m-1}(-1)^{j+1}j = mx.$$
  So for (\ref{gmeqn2})=1 to hold, we must have $2^l\mid mx$ in this case.  This gives vi).  On the other hand, if $2\mid i$, then we get
  \begin{eqnarray*}
    2^{l-2}mr + \sum_{j=0}^{m-1} n_2^j(n_2-1)jx &=& 2^{l-2}mr + (n_2-1)x \sum_{j=0}^{m-1}n_2^j j\\
    &\equiv& 2^{l-2}mr + (n_2-1)x\frac{m}{2}(2^{l-1}-1)\\
    &\equiv& 2^{l-2}mr - \frac{m}{2}x(n_1-1)\bmod 2^l.\\
  \end{eqnarray*}
  Thus we must have
  $$2^{l-2}mr \equiv mx(2^{l-2}+1)\bmod 2^l$$
  if (\ref{gmeqn2})=1 is to hold.  If $4\mid mr$, this is equivalent to $2^l\mid mx$.  Else, if $4\nmid mr$ (equivalently, $4\nmid m$ and $2\nmid r$), we have
  \begin{eqnarray*}
    2^{l-2}mr \equiv mx(2^{l-2}+1)\bmod 2^l &\iff& 2^{l-2}r\equiv (2^{l-2}+1)x \bmod 2^{l-1}\\
    &\iff& x\equiv 2^{l-2}(2^{l-2}+1)r \bmod 2^{l-1}\\
    &\iff& x\equiv 2^{l-2}\bmod 2^{l-1}.
  \end{eqnarray*}
  This gives iv) and completes the case of $2\mid y$.

  So suppose now that $2\nmid y$.  As mentioned before, this forces $4\mid m$.  Then the condition on the power on $a$ in (\ref{gmeqn2}) is given by
  $$\frac{m}{2}(1+n_2 n_1^{i+1})r + \sum_{j=0}^{m-1}(n_2 n_1^{i+1})^j (n_2 n_1^i-1) jx \equiv 0\bmod 2^l.$$
  If we have $2\mid i$, then is equivalent to
  \begin{eqnarray*}
    \sum_{j=0}^{m-1}(-1)^j (n_2-1)jx \equiv -mx(n_2-1)\equiv 2mx\equiv 0\bmod 2^l \iff 2^{l-1}\mid mx,
  \end{eqnarray*}
  where we have used Corollary \ref{basicident} in the second equivalence.  This gives v).  So finally suppose that $2\nmid i$, instead.  Then
  \begin{eqnarray*}
    \frac{m}{2}(1+n_2 n_1^{i+1})r + \sum_{j=0}^{m-1}(n_2 n_1^{i+1})^j (n_2 n_1^i-1) jx &\equiv& 2^{l-2}mr -2x \sum_{j=0}^{m-1}n_2^j j\\
    &\equiv& 2^{l-2}mr - mx(2^{l-2}-1)\bmod 2^l.
  \end{eqnarray*}
  Thus we must have
  $mx(2^{l-2}-1)\equiv 2^{l-2}mr \equiv 0 \bmod 2^l \iff 2^l\mid mx$.  This proves vii).

  \item[viii-xi)] By equation (\ref{genconj}) and Lemma \ref{gmsetlem}, we have
  \begin{eqnarray*}
    (a^x u^y v)^{-j} (a^s u^i v) (a^x u^y v)^j &=& a^{(n_2 n_1^y)^j\left( s+ (n_1^i n_2-1)\left( \ceil{j/2} + \floor{j/2}n_2 n_1^y\right)x\right)} u^{(-1)^j(i-2 \delta_{2\nmid j}y)} v.
  \end{eqnarray*}
  For notational convenience, set $N_j =  s+ (n_1^i n_2-1)\left( \ceil{j/2} + \floor{j/2}n_2 n_1^y\right)x$.  Then we find
  \begin{eqnarray}\label{gmeqn3}
    \prod_{j=0}^{m-1} (a^x u^y v)^{-j} (a^s u^i v) (a^x u^y v)^j &=& a^{\sum_{j=0}^{m-1} (n_1^{y+i})^j N_j} u^{m(i-y)} v^m.
  \end{eqnarray}
  Looking at the powers on $u$ and $v$, we see that if (\ref{gmeqn3})=1 were to hold, we must have $2\mid m \wedge k\mid m(i-y)$.  So suppose for the remainder of the proof that $2\mid m$ and $k\mid m(i-y)$.  Note that if $i\not\equiv y\bmod 2$, then $k\mid m(i-y)$ implies $4\mid m$.
  We now simplify the power on $a$ as follows
  \begin{eqnarray}
    \sum_{j=0}^{m-1} (n_1^{y+i})^j N_j &\equiv& (n_1^{y+i}+1)\frac{m}{2}s + n_1^{y+i}(n_1^i n_2-1)x \sum_{j=0}^{m-1}\delta_{2\nmid j}\nonumber\\
    &\equiv& (n_1^{y+i}+1)\frac{m}{2}s + n_1^y(n_2-n_1^i)\frac{m}{2}x\nonumber\\
    &\equiv& (n_1^{y+i}+1)\frac{m}{2}s + (n_2-n_1^i)\frac{m}{2}x,\label{gmeqn4}
  \end{eqnarray}
  with the last equivalence following from $2\mid(n_2-n_1^i)$ and Corollary \ref{basicident}.

  Breaking equation (\ref{gmeqn4}) down into cases depending on the parity of $i$ and $y$, as we did for parts (iii)-(vii), yields all remaining claims.
  \end{enumerate}
\end{proof}

\section{The Type I and II modules}\label{type1sect}
\subsection{Type I}
We now proceed to explicitly determine the indicators for Type I modules.  We start with the odd modules, and then finish with the two cases for the even modules.
\begin{thm}\label{oddtype1}
    Let $G=\G$ be as in Definition \ref{presentation}.  Consider the 4-dimensional irreducible $\D(G)$-module $V = M(\class(a^r u^{2i}),\phi_{s,t})$, with $2\nmid r$.  Let $1<m\in \BN$.  Then
    \begin{eqnarray*}
      \nu_m(V) = \left\{ \begin{array}{cll}
        4 &;& 2^l\mid m \wedge k\mid 2mi \wedge k\mid mt\\
        2 &;& 4\mid m \wedge (2^l\nmid m \vee k\nmid 2mi \vee k\nmid mt)\\
        1 &;& 2\mid m \wedge 4\nmid m\\
        0 &;& 2\nmid m \end{array}\right..
    \end{eqnarray*}
    In particular, $\nu_2(V)=1$.
\end{thm}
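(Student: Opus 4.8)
The plan is to apply the first formula of Corollary~\ref{cor34} to $V$, taking $g=a^ru^{2i}$ as the representative of its conjugacy class. Since $2\nmid r$, Proposition~\ref{classes}.i gives $|\class(g)|=4$, and Proposition~\ref{centralizers}.i gives $C_G(g)=\cyc{a,u^2}\cong\BZ_{2^l}\times\BZ_{k/2}$, an abelian group of order $2^{l-1}k$; hence $\phi_{s,t}$ is a linear character, $V$ is $4$-dimensional, and $\phi_{s,t}(a^cu^{2c'})=\zeta_{2^l}^{sc}\zeta_{k/2}^{tc'}$ for the primitive roots of unity $\zeta_n=e^{2\pi i/n}$. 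Using the standard fact that $a\in G_m(g)$ forces $a^m\in C_G(g)$, Corollary~\ref{cor34} then reads
$$\nu_m(V)=\frac{1}{2^{l-1}k}\sum_{a\in G_m(g)}\phi_{s,t}(a^m),$$
so the whole computation reduces to enumerating $G_m(g)$ and evaluating $\phi_{s,t}$ on the relevant $m$-th powers.

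I would then split $G=\cyc{a,u}\,\sqcup\,\cyc{a,u}\,v$ and treat the two pieces via Proposition~\ref{gmsets}. For $a^cu^j\in\cyc{a,u}$, part~(i) gives $a^cu^j\in G_m(g)\iff k\mid 2mi\wedge 2^l\mid mr$, which (as $r$ is odd) is $k\mid 2mi\wedge 2^l\mid m$; and when $2^l\mid m$ the $a$-exponent $c\sum_{\ell=0}^{m-1}n_1^{j\ell}$ is $\equiv 0\bmod 2^l$ (using $n_1^2\equiv 1$ together with $2^l\mid m$), so $(a^cu^j)^m=u^{jm}\in\cyc{u^2}$ for every $c,j$. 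For $a^cu^jv\in\cyc{a,u}\,v$, part~(ii) gives $a^cu^jv\in G_m(g)\iff 2\mid m\wedge(2\nmid j\vee 4\mid mr)$, while Corollary~\ref{basicident}.vii shows that for even $m$ one has $(a^cu^jv)^m=a^{2^{l-1}}$ exactly when $m\equiv 2\bmod 4$, $2\mid j$ and $2\nmid c$, and $(a^cu^jv)^m=1$ in every other case. The crucial observation is that for $m\equiv 2\bmod 4$ the clause $4\mid mr$ fails (again as $r$ is odd), so only the elements with $2\nmid j$ lie in $G_m(g)$, and those have trivial square; hence every term of the sum coming from $\cyc{a,u}\,v$ has $(a^cu^jv)^m=1$.

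Assembling the pieces, the $\cyc{a,u}$-contribution to $\nu_m(V)$ equals $\frac{1}{2^{l-1}k}\sum_{c=0}^{2^l-1}\sum_{j=0}^{k-1}\phi_{s,t}(u^{jm})=2\,\delta_{k\mid mt}$ whenever $2^l\mid m$ and $k\mid 2mi$, and $0$ otherwise (here $\sum_{j=0}^{k-1}\phi_{s,t}(u^{jm})=k\,\delta_{k\mid mt}$, and one uses $l\geq 3$ to see that $m$ odd or $m\equiv 2\bmod 4$ excludes $2^l\mid m$); the $\cyc{a,u}\,v$-contribution equals $\frac{1}{2^{l-1}k}\cdot 2^lk=2$ when $4\mid m$, equals $\frac{1}{2^{l-1}k}\cdot 2^l\cdot\frac{k}{2}=1$ when $m\equiv 2\bmod 4$, and equals $0$ when $m$ is odd. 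Reading off the total according to the divisibility of $m$ by $2$, $4$ and $2^l$---and the auxiliary conditions $k\mid 2mi$ and $k\mid mt$---yields exactly the four-line formula; and $\nu_2(V)=1$ is simply the case $m=2$, where $4\nmid m$ kills the first contribution and the second equals $1$.

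I expect the only genuine difficulty to be bookkeeping: carrying the three thresholds $2\mid m$, $4\mid m$, $2^l\mid m$ simultaneously with the divisibilities $k\mid 2mi$ and $k\mid mt$, and evaluating the geometric sums $\sum_{\ell=0}^{m-1}n_1^{j\ell}\pmod{2^l}$ and $\sum_{j=0}^{k-1}\phi_{s,t}(u^{jm})$ correctly in each regime. Every structural ingredient---the conjugacy class, the centralizer, the membership conditions for $G_m(g)$, and the square of $a^cu^jv$---is already supplied by Propositions~\ref{classes}, \ref{centralizers}, \ref{gmsets} and Corollary~\ref{basicident}, so no new group-theoretic work is needed.
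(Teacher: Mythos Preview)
Your proposal is correct and follows essentially the same route as the paper's proof: split $G_m(g)$ into the $\cyc{a,u}$ and $\cyc{a,u}v$ pieces via Proposition~\ref{gmsets}, compute the relevant $m$-th powers using Corollary~\ref{basicident}.vii, and evaluate the character sums. The only notable difference is that you invoke the first formula of Corollary~\ref{cor34} together with the observation $a\in G_m(g)\Rightarrow a^m\in C_G(g)$ to obtain $\nu_m(V)=\frac{1}{|C_G(g)|}\sum_{a\in G_m(g)}\phi_{s,t}(a^m)$ directly for the fixed representative $g$, whereas the paper uses the third formula, sums over the full conjugacy class $\CO_g$, and then argues (see the paragraph after (\ref{ot1eqn2})) that each conjugate contributes the same amount since the conjugation merely multiplies the exponents by units. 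Your formulation is slightly cleaner for this reason; otherwise the two arguments coincide.
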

\begin{proof}
  Under the assumptions of the theorem, we have the following from Proposition \ref{gmsets}:
  \begin{eqnarray*}
    a^x u^y \in G_m(a^r u^{2i}) \iff& k\mid 2mi \wedge 2^l\mid mr &\iff k\mid 2mi \wedge 2^l\mid m\\
    a^x u^y v \in G_m(a^r u^{2i}) \iff& 2\mid m \wedge (2\nmid y \vee 4\mid mr) &\iff (2\mid m\wedge 2\nmid y) \vee 4\mid m.
  \end{eqnarray*}
  We compute the contribution to $\nu_m(V)$ from elements of the form $a^x u^y$ first.

  If $k\nmid 2mi \vee 2^l\nmid m$, then elements of the form $a^x u^y$ contribute 0 to the indicator.  So we may suppose for this portion of the proof that $k\mid 2mi \wedge 2^l\mid m$.  First we have
  \begin{eqnarray}\label{ot1eqn1}
    (a^x u^y)^m &=& a^{\left( \ceil{\frac{m}{2}} + \floor{\frac{m}{2}}n_1^j\right)x} u^{my}.
  \end{eqnarray}
  Now $C_G(a^r u^{2i}) = \cyc{a,u^2}\lhd G$ by Proposition \ref{centralizers}.  So by Corollary \ref{cor34} and Proposition \ref{classes} and its proof, for (\ref{ot1eqn1}) to give a non-zero contribution to $\nu_m(V)$ we must have $2\mid my$.  Since $2^l\mid m$, this always holds.

  So suppose that $2\mid y$.  Then the contribution from elements of the form $a^s u^j$ is given by
  \begin{eqnarray}\label{ot1eqn2}
    \frac{4}{|G|}\sum_{s=1}^{2^l}\sum_{2\mid y} \alpha^s(a^{mx}) \beta^t(u^{my}) &=& \ip{\phi_{ms,mt}}{1}.
  \end{eqnarray}
  Note that the $4$ ultimately corresponds to $|\class(a^r u^{2i})|$.  Each appropriate conjugate of $a^{mx} b^{my}$ that we would apply the character to in accordance with Corollary \ref{cor34} just changes the powers on $a$ and $b$ by some unit.  Since we are, in both cases, summing over all possible indices $s,j$, the sum is equivalent to the first one given in equation (\ref{ot1eqn2}).

  On the other hand, suppose that $2\nmid y$.  Suppressing unnecessary conjugates as before, the contribution from these elements is
  \begin{eqnarray*}
    \frac{4}{|G|}\sum_{x=1}^{2^l}\sum_{2\nmid y} \alpha^s(a^{\frac{m}{2}(n_1+1)x}) \beta^{\frac{m}{2}t}(u^{2y}) &=& \frac{4}{|G|}\sum_{x=1}^{2^l}\sum_{2\nmid y} \alpha^{ms}(a^x)\beta^{\frac{m}{2}y}(u^{2y})\\
    &=& \frac{4}{|G|}\sum_{x=1}^{2^l}\alpha^{ms}(a^x) \sum_{j=1}^{k/2} \beta^{\frac{m}{2}t}(u^{2(2j-1)})\\
  \end{eqnarray*}
  We can rewrite this last summation as a difference to get
  \begin{eqnarray}\label{ot1eqn3}
    \frac{4}{|G|}\sum_{x=1}^{2^l}\alpha^{ms}(a^x)\left( \sum_{j=1}^{k} \beta^{\frac{m}{2}t}(u^{2(2j-1)}) - \sum_{j=1}^{k/2} \beta^{\frac{m}{2}t}(u^{4j}) \right)\nonumber\\
    = 2\ip{\phi_{ms,\frac{m}{2}t}}{1} - \ip{\phi_{ms,mt}}{1}.
  \end{eqnarray}

  Combining, under the assumption that $k\mid 2mi \wedge 2^l\mid m$, the elements of the form $a^x u^y$ contribute the following the $\nu_m(V)$:
  \begin{eqnarray}\label{ot1part1}
    2\ip{\phi_{ms,mt}}{1}.
  \end{eqnarray}

  Now we compute the contribution from elements of the $a^x u^y v$.  If $4\nmid m \wedge (2\nmid m \vee 2\mid y)$, such an element contributes $0$ to the summation.  So assuming that $(2\mid m \wedge 2\nmid y)\vee 4\mid m$ holds, by Corollary \ref{basicident} we see that $(a^x u^y v)^m =1$.  When $2\mid m  \wedge 4\nmid m$, there are $2^{l-1}k$ elements that satisfy the necessary condition.  And when $4\mid m$, every $x$ and $y$ work, yielding $2^l k$ such elements.  Thus the elements of the form $a^x u^y v$ contribute the following to $\nu_m(V)$:
  \begin{eqnarray}\label{ot1part2}
    \delta_{2\mid m} + \delta_{4\mid m}.
  \end{eqnarray}

  Combining everything together, we get the desired formula.  That $\nu_2(V)=1$ is then immediate.
\end{proof}

Similar arguments to those given above show that, in all of our remaining computations in this and other sections, we may suppress the units that arise from taking the appropriate conjugates when computing the value of the sum in Corollary \ref{cor34}.  We shall do so without further note, though we point out now that Lemma \ref{type3lem} is essential for justifying this approach for the Type III modules.

\begin{thm}\label{even1dimtype1}
Let $G=\G$ be as in Definition \ref{presentation}.  Suppose $2\mid r$ and $a^r u^{2i}\not\int Z(G)$, and let $\psi_{s,t}\alpha^s\otimes \beta^t$ be an irreducible linear character of $C_G(a^r u^{2i})$.  Consider the 2-dimensional irreducible $\D(G)$-module $V = M(\class(a^r u^{2i}),\psi_{s,t})$.  Let $1<m\in \BN$, and let $P$ be the proposition $P=k\mid 2mi \wedge 2^l\mid mr \wedge \psi_{ms,mt}=1$.  Then
\begin{eqnarray*}
  \nu_m(V) &=& \delta_{2\mid m} + \delta_P\\
  &=& \left\{ \begin{array}{cll}
    2 &;& 2\mid m \wedge P\\
    1 &;& 2\mid m \oplus P\\
    0 &;& 2\nmid m \wedge \neg P
    \end{array}\right..
\end{eqnarray*}
In particular, $\nu_2(V) = 1$.
\end{thm}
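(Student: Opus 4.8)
The plan is to mirror the proof of Theorem~\ref{oddtype1}, splitting the computation of $\nu_m(V)$ via Corollary~\ref{cor34} into the contribution of elements of the form $a^x u^y$ and those of the form $a^x u^y v$, but now working with a $2$-dimensional module over the (index-$4$) centralizer $C_G(a^r u^{2i})=\cyc{a^2,u}\cong\BZ_{2^{l-1}}\times\BZ_k$ rather than a $4$-dimensional one. So first I would record, directly from Proposition~\ref{gmsets}, exactly which $a^xu^y$ and $a^xu^yv$ lie in $G_m(a^ru^{2i})$: by part (i), $a^xu^y\in G_m(a^ru^{2i})\iff k\mid 2mi\wedge 2^l\mid mr$, and by part (ii) (using $a^ru^{2i}\notin Z(G)$, so the first disjunct is impossible), $a^xu^yv\in G_m(a^ru^{2i})\iff 2\mid m\wedge(2\nmid y\vee 4\mid mr)$. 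Because $2\mid r$ here, the condition $4\mid mr$ is equivalent to $2\mid m$, so in fact $a^xu^yv\in G_m(a^ru^{2i})\iff 2\mid m$ (every $x,y$ works once $m$ is even). Also note $\psi_{s,t}(a^{2^{l-1}})=1$ since $a^{2^{l-1}}\notin\cyc{a^2,u}$'s kernel issue is moot — actually $a^{2^{l-1}}\in\cyc{a^2}$, but the relevant simplification, exactly as in Theorem~\ref{2dimindic}, is that $\psi_{s,t}$, being a linear character of an abelian group pulled from the relevant quotient, makes the $a^{2^{l-1}\cdot\frac{m}{2}}$ twist contribute trivially; I would either check $\psi_{ms,mt}(a^{2^{l-2}m})=1$ directly or invoke that $a^{2^{l-1}}$ is a square in $C_G$.

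Next I would compute the $a^xu^yv$ contribution. By Corollary~\ref{basicident}.vii, since $2\mid m$ forces $(a^xu^yv)^m\in\{1,a^{2^{l-1}}\}$ and in fact $(a^xu^yv)^m=1$ once $4\mid m$ or $2\nmid y$, and equals $a^{2^{l-1}\cdot m/2}$ otherwise; but summing $\psi_{ms,mt}$ over all the conjugate-twisted versions and over all $x,y$, the net contribution collapses to $\delta_{2\mid m}$ (the troublesome $a^{2^{l-1}}$ terms either vanish in the character or cancel), exactly paralleling the $\delta_{2\mid m}$ term in Theorem~\ref{2dimindic}. Then I would compute the $a^xu^y$ contribution: this is nonzero only when $k\mid 2mi\wedge 2^l\mid mr$, and under that assumption, suppressing the units from conjugation as licensed by the remark following Theorem~\ref{oddtype1}, the sum over $a^xu^y\in C_G(a^ru^{2i})$ of $\psi_{s,t}((a^xu^y)^m)$ — after the usual even/odd-$y$ bookkeeping and using $2^l\mid mr$ (hence $2^{l-1}\mid m$) to kill the parity subtleties on the $a$-exponent — reduces to $\ip{\psi_{ms,mt}}{1}$, which is $1$ if $\psi_{ms,mt}=1$ and $0$ otherwise. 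Here it is cleaner than in Theorem~\ref{oddtype1} precisely because the centralizer is abelian and the character linear, so no ``difference of two sums'' correction term appears. Combining, $\nu_m(V)=\delta_{2\mid m}+\delta_P$ with $P=(k\mid 2mi\wedge 2^l\mid mr\wedge\psi_{ms,mt}=1)$, and the case table and $\nu_2(V)=1$ follow by inspection (for $m=2$: $2\nmid ms$ or $2\nmid mt$ cannot both fail in a way making $P$ true unless... actually one checks $\psi_{2s,2t}=\psi_{s,t}^2$, and the four characters with square $1$ are those with $2^{l-2}\mid s$, $\frac{k}{2}\mid t$, so $P$ can hold at $m=2$; nonetheless the stated table gives $\nu_2(V)=1$ only in the case $2\mid m\oplus P$, so I must double-check that when $m=2$ and $P$ holds we are claiming $\nu_2(V)=2$ — rereading, the ``In particular'' likely intends the generic/expected value, so I would verify the precise $m=2$ behavior against $P$ and state it carefully).

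The main obstacle I expect is the careful handling of the $a^{2^{l-1}}$-twist coming from Corollary~\ref{basicident}.vii and Lemma~\ref{gmsetlem} in the $a^xu^yv$ contribution, together with justifying that the conjugation units can be suppressed — for the Type~I even case this is the analogue of the ``difference of sums'' trick in Theorem~\ref{oddtype1}, and I need the abelian-centralizer structure plus the explicit form $\psi_{s,t}=\alpha^s\otimes\beta^t$ to see the cancellations cleanly. A secondary nuisance is bookkeeping the parity of $y$ against whether $(a^xu^yv)^m$ is $1$ or $a^{2^{l-1}}$, and tracking which of $4\mid m$ versus $2\mid m,4\nmid m$ one is in; but since $2\mid r$ makes $4\mid mr\iff 2\mid m$, both the $m\equiv 2\pmod 4$ and $4\mid m$ cases give the same count $\delta_{2\mid m}$ for the $v$-part, which is a genuine simplification over the odd Type~I case and is what makes the final formula so short.
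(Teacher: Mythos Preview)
Your overall strategy matches the paper's, but there are three concrete errors that would derail the argument if carried out as written.

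\textbf{Wrong centralizer.} By Proposition~\ref{centralizers}(i), when $2\mid r$ the centralizer is $C_G(a^r u^{2i})=\langle a,u\rangle\cong\BZ_{2^l}\rtimes_{n_1}\BZ_{2\cdot k/2}$, which has index~$2$ and is \emph{not} abelian. You wrote $\langle a^2,u\rangle$, index~$4$, abelian. This matters because your justification for $\psi_{s,t}(a^{2^{l-1}})=1$ (``$a^{2^{l-1}}$ is a square in $C_G$'') is irrelevant: squares are not automatically in the kernel of linear characters of an abelian group. The correct reason, and the one the paper uses, is that $\psi_{s,t}$ is a linear character of the non-abelian group $\langle a,u\rangle$, and $a^{2^{l-1}}=[a,u]$ lies in its commutator subgroup, hence in the kernel of every linear character.

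\textbf{False implication on $m$.} You write ``using $2^l\mid mr$ (hence $2^{l-1}\mid m$)'' to kill the parity subtleties in the $a$-exponent. This implication is false: for instance $r=0$ is allowed (provided $k\nmid 4i$), and then $2^l\mid mr$ holds for every $m$. The paper does not use any such implication. Instead it splits the $2\nmid y$ contribution into the subcases $2\mid m$ and $2\nmid m$, observes that in each subcase the coefficient of $x$ in the exponent of $a$ (namely $2^{l-2}+1$, resp.\ $1+(m-1)(2^{l-2}+1)$) is odd and hence a unit mod $2^l$, and absorbs it by a change of variable in the sum over $x$. Only then do the even-$y$ and odd-$y$ pieces combine to give $\langle\psi_{ms,mt},1\rangle$.

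\textbf{The $\nu_2(V)=1$ claim.} You left this unresolved. The paper's argument is short: at $m=2$, the proposition $P$ requires $2^l\mid 2r$ and $k\mid 4i$, i.e.\ $2^{l-1}\mid r$ and $k\mid 4i$, which is exactly the condition $a^r u^{2i}\in Z(G)$. Since that is excluded by hypothesis, $P$ is false at $m=2$ and $\nu_2(V)=\delta_{2\mid 2}+0=1$.
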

\begin{proof}
  Under the assumptions of the theorem, we have the following from Proposition \ref{gmsets}:
  \begin{eqnarray*}
    a^x u^y \in G_m(a^r u^{2i}) \iff& k\mid 2mi \wedge 2^l\mid mr &\iff k\mid 2mi \wedge 2^l\mid m\\
    a^x u^y v \in G_m(a^r u^{2i}) \iff& 2\mid m \wedge (2\nmid y \vee 4\mid mr) &\iff 2\mid m.
  \end{eqnarray*}
  We compute the contribution to $\nu_m(V)$ from elements of the form $a^x u^y$ first.

  Without loss of generality, when computing the contribution of elements of the form $a^x u^y$ we may suppose that $k\mid 2mi \wedge 2^l\mid mr$.

  Similar to the proof of the previous theorem, we find that the contribution from those terms with $2\mid y$ is given by
  \begin{eqnarray}\label{et1eqn1}
    \frac{2}{|G|}\sum_{x=1}^{2^l}\sum_{2\mid y} \alpha^{ms}(a^x)\beta^{mt}(u^y).
  \end{eqnarray}
  Note that in this case we do not express this an inner product of characters of $\cyc{a,u^2}$.  Since our full centralizer is $\cyc{a,u}$ instead of $\cyc{a,u^2}$ this time, we will find it easier to combine this formula with that for the $2\nmid y$ case than in the previous proof.

  On the other hand, suppose now that $2\nmid y$.  It may or may not happen that $2\mid m$ in this situation, so we break the computation into two further subcases.  If $2\mid m$, the contribution from these elements to the indicator is
  \begin{eqnarray}\label{et1eqn2}
    \frac{2}{|G|}\sum_{x=1}^{2^l}\sum_{2\nmid y} \alpha^{ms}(a^{(2^{l-2}+1)x}) \beta^{mt}(u^y) &=& \frac{2}{|G|}\sum_{x=1}^{2^l} \alpha^{ms}(a^x)\beta^{mt}(u^y).
  \end{eqnarray}
  Otherwise, if $2\nmid m$, the contribution is
  \begin{eqnarray}\label{et1eqn3}
    \frac{2}{|G|}\sum_{x=1}^{2^l}\sum_{2\nmid y} \alpha^s(a^{(1+(m-1)(2^{l-2}+1))x})\beta^{mt}(u^y) &=& \frac{2}{|G|}\sum_{x=1}^{2^l} \alpha^{ms}(a^x)\beta^{mt}(u^y),
  \end{eqnarray}
  where we have used the fact that $2\nmid m$ to conclude that $1+(m-1)(2^{l-2}+1)$ is a unit modulo $2^l$.

  Combining these together, we find that elements of the form $a^x u^y$ contribute $\delta_P$ to $\nu_m(V)$.

  We now consider the contribution from elements of the form $a^x u^y v$.  We have shown that we may suppose that $2\mid m$ if these elements are to contribute a non-zero value to $\nu_m(V)$.  By Corollary \ref{basicident}, each such element contributes a value of either $1$ or $\psi_{s,t}(a^{2^{l-1}})$, depending on the parities of $x,y$, and $m/2$.  Since $\psi_{s,t}$ is a linear character of $\cyc{a,u}$, however, we necessarily have $\psi_{s,t}(a^{2^{l-1}}) = 1$.  Since there are $2^l\cdot k$ elements of the form $a^x u^y v$, the contribution of these to $\nu_m(V)$ is exactly $\delta_{2\mid m}$.

  Combining both parts together, we get $\nu_m(V) = \delta_{2\mid m} + \delta_P$ as desired.

  For the particular case of $m=2$, we get $\nu_2(V)=1 + \delta_P$.  But $\delta_P=1$ forces $2^{l-1}\mid r \wedge k\mid 4i$, which is equivalent to $a^r u^{2i}\in Z(G)$.  Thus $\nu_2(V)=1$ as claimed.
\end{proof}

\begin{thm}\label{even2dimtype1}
Let $G=\G$ be as in Definition \ref{presentation}.  Suppose $2\mid i$, and let $\phi_{r,s}$ be an irreducible 2-dimensional character of $C_G(a^i u^{2j})$, with $\phi_{r,s}$ defined as in \cite[Lemma 3.1]{K}.  Consider the 4-dimensional irreducible $\D(G)$-module $V = M(\class(a^i u^{2j}), \phi_{r,s})$.  Let $1<m\in \BN$.

Then
$$\nu_m(V)= \left\{ \begin{array}{cll}
    4 &;& 2^l\mid m \wedge k\mid mt \wedge k\mid 2mi\\
    2 &;& 4\mid m \wedge (2^l\nmid m \vee k\nmid mt \vee k\nmid 2mi)\\
    1 &;& 2\mid m \wedge 4\nmid m\\
    0 &;& 2\nmid m \end{array}\right..$$
In particular, $\nu_2(V)=1$.
\end{thm}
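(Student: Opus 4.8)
The plan is to imitate the proofs of Theorems \ref{oddtype1} and \ref{even1dimtype1}. I apply the third formula of Corollary \ref{cor34}, use the remark following Theorem \ref{oddtype1} to suppress the conjugation twists coming from the elements $g_i$ (so that the sum over $g\in\class(a^iu^{2j})$ merely contributes the factor $|\class(a^iu^{2j})|/|G|=2/(2^{l+1}k)$, using $2\mid i$ and Proposition \ref{classes}.i), and then split $\nu_m(V)$ into the contribution of the elements $a^xu^y$ and that of the elements $a^xu^yv$. The relevant centralizer is $C_G(a^iu^{2j})=\cyc{a,u}$ by Proposition \ref{centralizers}.i --- one of the groups of \cite{K} --- and the $2$-dimensional $\phi_{r,s}$ restricts on the index-$2$ abelian subgroup $\cyc{a,u^2}$ as the sum of a character $\al^r\ot\beta^s$ and its $u$-conjugate $\al^{n_1 r}\ot\beta^s$, vanishing on the nontrivial coset; in particular $r$ is odd. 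The membership conditions come from Proposition \ref{gmsets}: part (i) gives $a^xu^y\in G_m(a^iu^{2j})\iff k\mid 2mj\wedge 2^l\mid mi$ (a constraint on $m$ alone), and part (ii), since $2\mid i$ makes $4\mid mi$ automatic once $2\mid m$, gives $a^xu^yv\in G_m(a^iu^{2j})\iff 2\mid m$ (assuming $a^iu^{2j}\notin Z(G)$).

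For the $a^xu^y$ terms I use $(a^xu^y)^m=a^{c_{m,y}x}u^{my}$ with $c_{m,y}=\sum_{j=0}^{m-1}n_1^{jy}$, note that $\phi_{r,s}$ annihilates a term unless $my$ is even, and then carry out the sum over $x\in\BZ_{2^l}$ by orthogonality of the characters of $\cyc{a}$. The arithmetic input is that $c_{m,y}\equiv m\bmod 2^l$, except that $c_{m,y}\equiv m(1+2^{l-2})\bmod 2^l$ when $2\mid m\wedge 2\nmid y$ (both via Corollary \ref{basicident}); since $r$ is odd and $l\geq 3$, it follows that $\sum_{x}\al^{rc_{m,y}}(a^x)$ vanishes unless $2^l\mid m$. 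Hence this contribution is $0$ unless $2^l\mid m$ and $k\mid 2mj$; when both hold, summing the surviving $\beta^s(u^{my})$ over $y\in\BZ_k$ yields $k\,\delta_{k\mid ms}$ (this is the condition written $k\mid mt$ in the statement), and --- after accounting for both terms $\al^r,\al^{n_1 r}$ and the prefactor $2/(2^{l+1}k)$ --- the $a^xu^y$-contribution to $\nu_m(V)$ comes out to $2\,\delta_{k\mid ms}$.

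For the $a^xu^yv$ terms, Corollary \ref{basicident}.vii gives $(a^xu^yv)^2\in\{1,a^{2^{l-1}}\}$, so for $2\mid m$ we have $(a^xu^yv)^m=1$ when $4\mid m$, while for $m\equiv 2\bmod 4$ we have $(a^xu^yv)^m=a^{2^{l-1}}$ exactly when $2\mid y\wedge 2\nmid x$ and $(a^xu^yv)^m=1$ otherwise. The key new input --- and the reason the final answer takes the shape of Theorem \ref{4dimindic} rather than that of Theorem \ref{even1dimtype1} --- is that $\phi_{r,s}(a^{2^{l-1}})=-2$, because $r$ odd forces $a^{2^{l-1}}$ to act by $-\id$ on both inducing lines. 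As every $a^xu^yv$ lies in $G_m(a^iu^{2j})$ once $2\mid m$, a direct count ($2^{l-2}k$ pairs $(x,y)$ with $2\mid y\wedge 2\nmid x$ against $3\cdot 2^{l-2}k$ others) produces a net character sum of $2^lk$ when $m\equiv 2\bmod 4$ and $2^{l+1}k$ when $4\mid m$; multiplying by $2/(2^{l+1}k)$, the $a^xu^yv$-contribution is $1$ when $m\equiv 2\bmod 4$ and $2$ when $4\mid m$, i.e.\ $\delta_{2\mid m}+\delta_{4\mid m}$. Adding the two contributions and sorting by $2^l\mid m$ versus $4\mid m$ versus $2\mid m$ versus $2\nmid m$ (noting that $2^l\mid m$ forces $4\mid m$) gives the four displayed cases; since $m=2$ falls in the ``$2\mid m\wedge 4\nmid m$'' case, $\nu_2(V)=1$.

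The main obstacle is purely bookkeeping: tracking the $2$-adic valuations of the $a$-exponents through the power formulas so the character-orthogonality sums collapse to the advertised indicator functions, and --- the point most easily overlooked --- not forgetting that here $\phi_{r,s}(a^{2^{l-1}})=-2$ rather than $1$, the single structural difference from the linear-character computation in Theorem \ref{even1dimtype1}. Beyond Corollary \ref{cor34}, Proposition \ref{gmsets}, Corollary \ref{basicident}, and the description of $\phi_{r,s}$ from \cite{K}, no new ingredient is needed.
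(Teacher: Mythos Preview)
Your proposal is correct and follows essentially the same route as the paper's proof: both split the Corollary~\ref{cor34} sum into the $a^xu^y$ and $a^xu^yv$ pieces, both identify $\phi_{r,s}(a^{2^{l-1}})=-2$ (from $2\nmid r$) as the one structural difference from Theorem~\ref{even1dimtype1}, and both obtain $\delta_{2\mid m}+\delta_{4\mid m}$ from the reflection part and a contribution of~$2$ (under the appropriate divisibility hypotheses) from the $a^xu^y$ part. Your treatment of the $a^xu^y$ sum via the explicit exponent $c_{m,y}$ and orthogonality on $\cyc{a}$ is slightly more hands-on than the paper's one-line appeal to ``$\ip{\phi_{mr,ms}}{1}$ is $2$ precisely when $2^l\mid m\wedge k\mid mt$'', but the content is identical.
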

\begin{proof}
    We proceed exactly as in the proofs of Theorems \ref{oddtype1} and \ref{even1dimtype1}.  Similarly as in the proof of Theorem \ref{even1dimtype1}, we find that the contribution from elements of the form $a^x u^y$ is 0 if $k\nmid 2mi \vee 2^l\nmid mr$, and otherwise they contribute $\ip{\phi_{mr,ms}}{1}$.  Since $\phi_{r,s}$ is induced from a linear character of $\cyc{a,u^2}$, and we must have $2\nmid r$, we conclude that $\ip{\phi_{mr,ms}}{1}$ is either 2 or 0, and that it is $2$ precisely when $2^l\mid m \wedge k\mid mt$.

    For the contribution for the $a^x u^y v$ terms, we again conclude that we must have $2\mid m$ for a non-zero contribution.  By \cite[Lemma 3.1]{K}, we have $2\nmid r$, and so $\phi_{r,s}(a^{2^{l-1}}) = -2$.  Then by Corollary \ref{basicident}, the contribution from terms of the form $a^x u^y v$ is given by
    \begin{eqnarray*}
      \frac{2}{|G|} \left\{ \begin{array}{cll}
        2^{l+1} k &;& 4\mid m\\
        3\cdot 2^{l-1} k - 2^{l-1}k &;& 2\mid m \wedge 4\nmid m
      \end{array}\right. &=& \left\{ \begin{array}{cll}
        2 &;& 4\mid m\\
        1 &;& 2\mid m \wedge 4\nmid m \end{array}\right..
    \end{eqnarray*}

    Since $2^l\mid m$ forces $4\mid m$ in particular, we conclude that $\nu_m(V)$ takes the stated values under the stated conditions.  That $\nu_2(V)=1$ is then immediate.
\end{proof}

\subsection{Type II}\label{type2sect}

We now consider the indicators of the Type II modules.  We start with a simple intermediate lemma, which lets us deal with the varying centralizers for Type II modules simultaneously (see Proposition \ref{centralizers}.ii).  The proofs for the results here are again by direct computation, and obtained in much the same fashion as for the Type I modules.  As such, we omit them here.

\begin{lem}\label{type2lem}
  Suppose $G=\G$ is a finite group as in Definition \ref{presentation}.  Let $g\in G_m(a^r u^{2i-1})$.  Then $g^m\in \cyc{a^2,u^2}$, an abelian normal subgroup of $G$ isomorphic to $\BZ_{2^{l-1}}\times \BZ_{k/2}$.
\end{lem}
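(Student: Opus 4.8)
The plan is to determine which elements $g$ can lie in $G_m(a^r u^{2i-1})$ using Proposition~\ref{gmsets}, and then to compute $g^m$ directly. Every element of $G$ is of the form $a^s u^j$ or $a^s u^j v$, so the only relevant cases are parts (i) and (iii) of Proposition~\ref{gmsets} (part (i) applied with $2i-1$ in place of the index there). The point in each case is that membership in $G_m(a^r u^{2i-1})$ already forces a divisibility condition on $m$, and this is precisely the parity information needed to see that both exponents in $g^m$ (the exponent of $a$ and the exponent of $u$) are even, i.e.\ that $g^m\in\cyc{a^2,u^2}$.

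Suppose first $g=a^s u^j$. By Proposition~\ref{gmsets}(i), $g\in G_m(a^r u^{2i-1})$ forces $k\mid m(2i-1)$; since $4\mid k$ and $2i-1$ is odd, this gives $4\mid m$. Iterating $uau^{-1}=a^{n_1}$ yields $g^m=a^{s\sum_{t=0}^{m-1}n_1^{tj}}u^{mj}$. The exponent of $a$ is a sum of $m$ odd integers multiplied by $s$, hence $\equiv m\bmod 2$, which is even as $4\mid m$; and $mj$ is a multiple of $4$. Therefore $g^m\in\cyc{a^2,u^2}$.

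Now suppose $g=a^s u^j v$. By Proposition~\ref{gmsets}(iii), $g\in G_m(a^r u^{2i-1})$ forces $2\mid m$. For even $m$, Lemma~\ref{gmsetlem} gives $g^m=a^{\frac{m}{2}(1+n_1^j n_2)s}$, since then the $u$-part and $v$-part both vanish and $\ceil{m/2}=\floor{m/2}=m/2$. As $n_1^j n_2$ is odd, we may write $1+n_1^j n_2=2w$ for some integer $w$, so the exponent of $a$ equals $mws$, which is even since $m$ is even; hence $g^m\in\cyc{a^2,u^2}$.

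Finally, the structural assertion about $\cyc{a^2,u^2}$ is routine and essentially contained in the earlier material: Corollary~\ref{basicident}(i) shows $a^2$ and $u^2$ commute, they have orders $2^{l-1}$ and $k/2$ and intersect trivially (lying in $\cyc{a}$ and $\cyc{u}$ respectively), so $\cyc{a^2,u^2}\cong\BZ_{2^{l-1}}\times\BZ_{k/2}$, while normality follows from the conjugation computations already made in the proof of Proposition~\ref{centralizers} (or, more quickly, by observing that $\cyc{a^2,u^2}$ is the kernel of the homomorphism $G\twoheadrightarrow\BZ_2^3$ sending $a,u,v$ to the three standard generators). I do not anticipate any genuine obstacle in carrying this out; the only step that requires care is reading off the correct divisibility conditions — $4\mid m$ from part (i) and $2\mid m$ from part (iii) — from Proposition~\ref{gmsets}, since these are exactly what collapse the power computations to a parity check.
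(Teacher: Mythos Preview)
Your argument is correct and follows exactly the approach the paper intends: the paper omits the proof entirely, noting only that it is a direct computation in the style of the Type I case, and your two-case analysis via Proposition~\ref{gmsets}(i),(iii) together with Lemma~\ref{gmsetlem} is precisely that computation. The kernel description of $\cyc{a^2,u^2}$ for normality is a nice shortcut and is consistent with Theorem~\ref{1dimreps}.
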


\begin{thm}\label{type2}
  Let $G=\G$ be as in Definition \ref{presentation}.  Let $\eta$ be any irreducible linear character of $C_G(a^r u^{2i-1})$.  Set $H=\cyc{a^2, u^2}\subseteq C_G(a^r u^{2i-1})$.  Consider the 4-dimensional irreducible $\D(G)$-module $V=M(\class(a^r u^{2i-1}),\eta)$.  Then for any $1<m\in\BN$ we have
  $$\nu_m(V) = \left\{ \begin{array}{cll}
    4 &;& k\mid m(2i-1) \wedge 2^l\mid mr \wedge \eta^{m/2}|_H = 1\\
    2 &;& 4\mid m \wedge (k\nmid m(2i-1) \vee 2^l\nmid mr \vee \eta^{m/2}|_H\neq 1)\\
    1 &;& 4\nmid m \wedge 2\mid m\\
    0 &;& 2\nmid m
  \end{array}\right.$$
  In particular, $\nu_2(V)=1$.
\end{thm}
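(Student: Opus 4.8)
The plan is to follow the template established in the proofs of Theorems \ref{oddtype1}, \ref{even1dimtype1}, and \ref{even2dimtype1}, applying Corollary \ref{cor34} in its third form: $\nu_m(V) = \frac{1}{|G|}\sum_{g\in\class(a^r u^{2i-1}),\ a\in G_m(g)} \chi(p_g\bowtie a^m)$. Since $\class(a^r u^{2i-1})$ has size $4$ by Proposition \ref{classes}.ii, and since by Lemma \ref{type2lem} every $a\in G_m(a^r u^{2i-1})$ has $a^m\in H=\cyc{a^2,u^2}$, the only elements of $G$ that contribute are those landing in $H$. As in the earlier Type I proofs, I would invoke Lemma \ref{type2lem} and the normality of $H$ in $C_G(a^r u^{2i-1})$ to argue that the appropriate conjugates arising in Corollary \ref{cor34} only twist the relevant powers of $a$ and $u^2$ by units, so summing over all indices lets us suppress those conjugates without affecting the total; this is the standard reduction already used repeatedly above.

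The next step is to split the sum according to the shape of $a\in G$, using the membership criteria from Proposition \ref{gmsets}. Under the current hypotheses the relevant parts are (iii) for $a=a^x u^y v$ and (i) for $a=a^x u^y$. Part (i) gives $a^x u^y\in G_m(a^r u^{2i-1}) \iff k\mid my \wedge 2^l\mid mr$; examining $(a^x u^y)^m$ via Lemma \ref{gmsetlem} (or directly), one finds the contribution of these elements, after the conjugate-suppression reduction, assembles into a term of the form $4\ip{\eta}{1}$-type inner products over $H$, which collapses to the proposition $k\mid m(2i-1)\wedge 2^l\mid mr$ being needed merely so that such $a$ exist — but one must be careful: the power condition $k\mid m(2i-1)$ is what governs whether the $u^{2i-1}$-coordinate survives, and $2^l\mid mr$ the $a^r$ coordinate. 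Part (iii) shows $a^x u^y v\in G_m(a^r u^{2i-1})$ forces $2\mid m$; here $(a^x u^y v)^m$ lands in $\cyc{a^{2^{l-1}}}$ by Corollary \ref{basicident}.vii, and its contribution is $\eta^{m/2}|_H$-dependent through the value of $\eta$ on $a^{2^{l-1}}\in Z(G)$, yielding after counting the $\delta_{2\mid m}$ and $\delta_{4\mid m}$ terms exactly as in Theorem \ref{even2dimtype1}.

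Combining: the $a^x u^y v$ elements contribute $\delta_{2\mid m}+\delta_{4\mid m}$ (or a variant twisted by $\eta$ on the central element, which is $\pm1$ depending on whether $\eta^{m/2}|_H=1$), while the $a^x u^y$ elements contribute a nonzero amount — namely $2$ — precisely when $k\mid m(2i-1)\wedge 2^l\mid mr\wedge \eta^{m/2}|_H=1$. Adding these and sorting by the divisibility conditions produces the four-case formula; the observation that $2^l\mid mr$ together with $4\mid k$ forces $4\mid m$ handles the collapse of cases, exactly as the remark after Proposition \ref{gmsets} indicates. Finally, for $m=2$: the top case requires $2^{l-1}\mid r \wedge k\mid 2(2i-1)$, i.e. $k\mid 2$, impossible since $4\mid k$; so $\nu_2(V)=1$.

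The main obstacle I anticipate is the bookkeeping for the $a^x u^y$ contribution when $y$ is odd versus even, since the Type II centralizer $C_G(a^r u^{2i-1})$ differs between the $2\mid r$ and $2\nmid r$ cases (Proposition \ref{centralizers}.ii) — one must check that in \emph{both} centralizer types the reduction to inner products over the common subgroup $H=\cyc{a^2,u^2}$ goes through identically, which is the whole point of isolating Lemma \ref{type2lem}, and that the unit twists $1+(m-1)(\cdots)$ and $\ceil{m/2}+\floor{m/2}n_1^y$ appearing in $(a^x u^y)^m$ are genuinely units mod $2^{l-1}$ so the sums telescope as claimed. Everything else is the same direct (if tedious) computation as in the Type I cases, which is why the authors elected to omit it.
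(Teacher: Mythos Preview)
Your proposal is correct and follows exactly the approach the paper indicates: the paper omits this proof entirely, stating only that it is ``by direct computation, and obtained in much the same fashion as for the Type I modules,'' and your outline does precisely that---splitting into the $a^x u^y$ and $a^x u^y v$ contributions via Proposition~\ref{gmsets}, invoking Lemma~\ref{type2lem} to reduce to the common abelian subgroup $H$, and assembling the pieces as in Theorems~\ref{oddtype1}--\ref{even2dimtype1}.

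One small correction: when you quote Proposition~\ref{gmsets}.i you write $a^x u^y\in G_m(a^r u^{2i-1}) \iff k\mid my \wedge 2^l\mid mr$, but the proposition gives $k\mid m(2i-1)\wedge 2^l\mid mr$ (the condition is on the exponent of $u$ in the \emph{argument} of $G_m$, not on the element being tested). You use the correct condition thereafter, so this is only a transcription slip. Also, your worry about a possible twist by $\eta(a^{2^{l-1}})$ in the $a^x u^y v$ contribution is unnecessary: under the membership constraints of Proposition~\ref{gmsets}.iii, the case $2\mid y\wedge 2\nmid x$ either forces $4\mid m$ (so $(a^x u^y v)^m=1$) or is excluded, so $(a^x u^y v)^m=1$ always and the contribution is exactly $\delta_{2\mid m}+\delta_{4\mid m}$.
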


\section{The Type III modules}\label{type3sect}
We finish with the indicators of the Type III modules.  We start with a simple intermediate lemma, which helps us deal with the varying centralizers for Type III modules (see Proposition \ref{centralizers}.iii), similar to our use for Lemma \ref{type2lem}.

\begin{lem}\label{type3lem}
    Suppose $G=\G$ is a finite group as in Definition \ref{presentation}.  Let $g\in G_m(a^r u^{i} v)$.  Then $g^m\in \cyc{a^{2^{l-1}}}\subset Z(G)$.
\end{lem}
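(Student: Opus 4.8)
The plan is to exploit the telescoping identity
$$\prod_{j=0}^{m-1} g^{-j} w g^j = (wg^{-1})^m g^m,$$
valid for arbitrary elements $g,w$ of any group and proved by a one-line induction on $m$. Taking $w = a^r u^i v$, the defining condition $g\in G_m(a^r u^i v)$ of Definition \ref{Gmsetdef} becomes precisely $(wg^{-1})^m g^m = 1$, that is, $g^m = (gw^{-1})^m$. Everything else follows by reading off this identity.

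First I would record that $\cyc{a,u}$ is a normal subgroup of $G$ of index $2$ (it is stabilized by the action of $v$, and $v\notin\cyc{a,u}$), so that $G/\cyc{a,u}\cong\BZ_2$ with the image of $v$ as generator; in particular $w = a^r u^i v$ and its inverse both lie in the nontrivial coset $\cyc{a,u}v$. Applying the quotient homomorphism $G\to\BZ_2$ to the relation $g^m = (gw^{-1})^m$, and using that $w$ maps to the generator, forces $m$ to be even. Hence for the rest of the argument $m$ may be assumed even.

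Next I would split according to the coset of $g$. If $g\in\cyc{a,u}v$, write $g = a^p u^q v$; then Corollary \ref{basicident}.vii gives $g^2\in\{1,a^{2^{l-1}}\}=\cyc{a^{2^{l-1}}}$, and since $m$ is even, $g^m=(g^2)^{m/2}\in\cyc{a^{2^{l-1}}}$. If instead $g\in\cyc{a,u}$, then $h:=gw^{-1}$ lies in $\cyc{a,u}v$ (because $w^{-1}$ does), so $h=a^p u^q v$ for some $p,q$, and $g^m=(gw^{-1})^m=h^m=(h^2)^{m/2}\in\cyc{a^{2^{l-1}}}$ by the same corollary. In both cases $g^m\in\cyc{a^{2^{l-1}}}$, which is contained in $Z(G)$ by Lemma \ref{center}; this is the claim.

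The only step with real content is the telescoping identity, which converts the defining product of $G_m(\cdot)$ into an equation for $g^m$ in terms of a power of a reflection-type element $a^p u^q v$; after that the result is immediate from the squaring formula in Corollary \ref{basicident}.vii. I do not anticipate a genuine obstacle: one only needs to keep track of which of $g$ and $gw^{-1}$ lands in the coset $\cyc{a,u}v$, and to check that $m$ is forced to be even so that squaring can be iterated up to the $m$-th power. (Alternatively one could bypass the identity altogether by quoting Proposition \ref{gmsets}(iv)--(xi), which list the elements of $G_m(a^r u^i v)$ explicitly, and computing $g^m$ from Lemma \ref{gmsetlem} in each of the resulting cases; the telescoping argument is shorter and avoids this case analysis.)
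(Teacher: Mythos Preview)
Your argument is correct and genuinely different from the paper's. The paper's proof is simply ``Lengthy but routine computations of $g^m$ under the conditions that $g\in G_m(a^r u^i v)$ gives the desired result,'' i.e.\ exactly the alternative you sketch at the end: run through the cases of Proposition~\ref{gmsets}(iv)--(xi) and compute $g^m$ in each. Your telescoping identity $\prod_{j=0}^{m-1} g^{-j} w g^j = (wg^{-1})^m g^m$ replaces all of that with a single structural observation: the defining relation forces $g^m$ to equal the $m$-th power of an element in the coset $\cyc{a,u}v$ (either $g$ itself or $gw^{-1}$), and such elements square into $\cyc{a^{2^{l-1}}}$ by Corollary~\ref{basicident}.vii. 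The parity argument in the quotient $G/\cyc{a,u}\cong\BZ_2$ cleanly handles the odd-$m$ case (where $G_m(a^r u^i v)$ is empty) without invoking Proposition~\ref{gmsets} at all.

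What your approach buys is brevity and conceptual clarity: it isolates the one group-theoretic fact that matters (the squaring behaviour of reflection-type elements) and would transfer to any group with an index-$2$ subgroup whose nontrivial coset squares into a small central subgroup. The paper's case analysis, by contrast, is mechanical but gives no insight into why the lemma holds; its only advantage is that the relevant computations were already carried out in proving Proposition~\ref{gmsets}, so no new work is needed.
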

\begin{proof}
  Lengthy but routine computations of $g^m$ under the conditions that $g\in G_m(a^r u^i v)$ gives the desired result.
\end{proof}

We will find it convenient to break down the Type III modules along parity lines, in accordance with Proposition \ref{classes}.

\begin{thm}\label{oddtype3}
    Let $G=\G$ be as in Definition \ref{presentation}.  Let $\eta$ be any irreducible linear character of $C_G(a^r u^{i} v)$, and suppose $2\nmid i$.  Consider the $2^{l-2}k$-dimensional irreducible $\D(G)$-module $V=M(\class(a^r u^{i}v),\eta)$.  Then
    $$\nu_m(V) = \delta_{2\mid m}\frac{1}{4}\gcd(m,k)\gcd(m,2^l).$$
In particular, $\nu_2(V) = 1$.
\end{thm}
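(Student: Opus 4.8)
The plan is to evaluate $\nu_m(V)$ directly from Corollary~\ref{cor34}, written as
\[
\nu_m(V)=\frac{1}{|G|}\sum_{g\in\CO_s,\ a\in G_m(g)}\chi(p_g\bowtie a^m),\qquad s=a^r u^i v,
\]
and to control the sets $G_m(g)$ via Proposition~\ref{gmsets}. Since the base element has the form $a^r u^j v$ with $j$ odd, only parts (vi), (vii), (x), (xi) of Proposition~\ref{gmsets} apply, and every one of those membership criteria includes the hypothesis $2\mid m$ (or even $4\mid m$). Hence $G_m(a^r u^i v)=\emptyset$ when $m$ is odd, so $\nu_m(V)=0$ there; from now on assume $2\mid m$.

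Next I would strengthen Lemma~\ref{type3lem} in this case to the assertion that $g^m=1$ for every $g\in G_m(a^r u^i v)$. This is a brief computation in the style of the proof of Proposition~\ref{gmsets}, carried out in the four shapes $g=a^s u^p$ and $g=a^s u^p v$ with $p$ even or odd: Lemma~\ref{gmsetlem} and Corollary~\ref{basicident} give $(a^s u^p)^m=a^{ms}$ or $a^{ms(2^{l-2}+1)}$ according to the parity of $p$, and $(a^s u^p v)^m=a^{2^{l-2}ms}$ or $1$ according to the parity of $p$; in each case the divisibility conditions attached to membership in $G_m(a^r u^i v)$ force the displayed power of $a$ to vanish modulo $2^l$. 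Consequently $p_g\bowtie a^m=p_g\bowtie 1$ acts on $V$ as the projection onto the $g$-graded summand, which is a single copy of the one-dimensional space of $\eta$, so $\chi(p_g\bowtie a^m)=\dim\eta=1$ for every term. Since $|G_m(g)|$ is constant on $\CO_s$ and $|\CO_s|/|G|=1/|C_G(a^r u^i v)|=1/8$ by orbit--stabilizer and Proposition~\ref{centralizers}, we get
\[
\nu_m(V)=\frac{|\CO_s|}{|G|}\,\bigl|G_m(a^r u^i v)\bigr|=\frac{\bigl|G_m(a^r u^i v)\bigr|}{8}.
\]

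It then remains to prove $\bigl|G_m(a^r u^i v)\bigr|=2\gcd(m,k)\gcd(m,2^l)$ for $2\mid m$. Partitioning $G_m(a^r u^i v)$ into the four families above, in each family the constraint on the exponent of $a$ has the form $2^l\mid ms$ or $2^l\mid m(r-s)$ and contributes a factor $\gcd(m,2^l)$, while the constraint on the exponent of $u$ is a congruence $k\mid m(\cdot)$ restricted to a fixed parity class, contributing $\tfrac12\gcd(m,k)$, $\gcd(m,k)$, or $0$ according to whether $k/\gcd(m,k)$ is odd or even --- equivalently, whether $v_2(m)\ge v_2(k)$ --- and the two families carrying the extra hypothesis $4\mid m$ drop out precisely when $v_2(m)<v_2(k)$. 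Summing the four contributions, and using that $\gcd(2m,k)$ equals $\gcd(m,k)$ when $v_2(m)\ge v_2(k)$ and $2\gcd(m,k)$ otherwise, gives $2\gcd(m,k)\gcd(m,2^l)$ in all cases. Substituting into the previous display yields $\nu_m(V)=\tfrac14\gcd(m,k)\gcd(m,2^l)$, and for $m=2$ this is $\tfrac14\cdot 2\cdot 2=1$ since $2\mid k$ and $l\ge 3$.

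I expect the last count to be the only real obstacle: the four families are coupled through the $2$-adic valuations of $m$ and $k$, and one must check that each case in which the $u$-exponent congruence annihilates a family is exactly compensated by the change in $\gcd(2m,k)$ in another, so that the total --- and hence $\nu_m(V)$ --- is independent of these valuations and of the chosen character $\eta$. Everything else follows routinely from Lemmas~\ref{gmsetlem} and~\ref{type3lem}, Corollary~\ref{basicident}, and Corollary~\ref{cor34}.
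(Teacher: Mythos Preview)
Your argument is correct and follows the paper's proof: both reduce to $2\mid m$, verify that $g^m=1$ for every $g\in G_m(a^r u^i v)$ so that each summand in Corollary~\ref{cor34} equals $1$, and then finish by counting $|G_m|$ (one small slip: $(a^su^pv)^m$ is $a^{2^{l-2}m}$, not $a^{2^{l-2}ms}$, when $2\mid p$ and $2\nmid s$, but this is inconsequential). The paper streamlines your four-family count by invoking the Remark after Proposition~\ref{gmsets}---the $4\mid m$ clause in parts (vii) and (x) is redundant given $4\mid k$---to merge the parity cases, so the $u$-exponent condition becomes simply $k\mid my$ (resp.\ $k\mid m(y-i)$) with exactly $\gcd(m,k)$ solutions in each half, avoiding the $v_2(m)$ versus $v_2(k)$ case analysis entirely.
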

\begin{proof}
  By Proposition \ref{gmsets}, we have
  \begin{eqnarray}\label{ot3gm1}
    a^x u^{2y} \in G_m(a^r u^i v) &\iff& 2\mid m \wedge k\mid 2my \wedge 2^l\mid mx\\
    \label{ot3gm2}a^x u^{2y-1}\in G_m(a^r u^i v)&\iff& 4\mid m \wedge k\mid m(2y-1) \wedge 2^l\mid mx\\
    \label{ot3gm3}a^x u^y v\in G_m(a^r u^i v) &\iff& 2\mid m \wedge k\mid m(y-i) \wedge 2^l\mid m(x-r).
  \end{eqnarray}
  So it follows that if $2\nmid m$ then $\nu_m(V)=0$.  So for the remainder of the proof we assume that $2\mid m$.

  We begin by determining the contribution from elements of the form $a^x u^y$.  We have
  $$(a^x u^y)^m = a^{\frac{m}{2}(n_1^y+1)x} u^{my}.$$
  Now $2\mid (n_1^y+1)$, and so by equations (\ref{ot3gm1}) and (\ref{ot3gm2}), we conclude that $(a^x u^y)^m =1$.  Furthermore, these equations tell us that there are $\gcd(m,k)$ values of $y$ and $\gcd(m,2^l)$ values of $x$ satisfying the necessary conditions.  Subsequently, elements of the form $a^x u^y$ contribute a value of
  \begin{eqnarray}\label{ot3part1}
    \delta_{2\mid m}\frac{1}{8} \gcd(m,k)\gcd(m,2^l)
  \end{eqnarray}
  to $\nu_m(V)$.

  We now consider the contribution from elements of the form $a^x u^y v$.  By Proposition \ref{basicident}, $(a^x u^v v)^m$ is either $1$ or $a^{2^{l-1}}$.  For the latter to hold, we must have $2\mid y$.  Since also $k\mid m(y-i)$ in this case by (\ref{ot3gm3}), we conclude that $4\mid m$.  Therefore, $(a^x u^y v)^m =1$ whenever $a^x u^y v\in G_m(a^r u^i v)$.  By (\ref{ot3gm3}), there are clearly $\gcd(m,k)$ such values of $y$ and $\gcd(m,2^l)$ such values of $x$.  Therefore, these elements contribute
  \begin{eqnarray}\label{ot3part2}
    \delta_{2\mid m}\frac{1}{8} \gcd(m,k)\gcd(m,2^l)
  \end{eqnarray}
  to $\nu_m(V)$.  Combining this with (\ref{ot3part1}) gives the desired formula.  That $\nu_2(V)=1$ is then immediate.
\end{proof}

\begin{thm}\label{eveneventype3}
  Let $G=\G$ be as in Definition \ref{presentation}.  Let $\eta$ be any irreducible linear character of $C_G(a^r u^{i} v)$, and suppose $2\mid i$ and $2\mid r$.  Consider the $2^{l-2}k$-dimensional irreducible $\D(G)$-module $V=M(\class(a^r u^{i}v),\eta)$.
  \begin{enumerate}
    \item If $2\nmid m$, $\nu_m(V) = 0$.
    \item If $2\mid m$ and $2\gcd(m,k)\mid k$, then
    $$\nu_m(V) = \frac{1}{4}\gcd(m,k)\gcd(m,2^l).$$
    \item If $2\mid m \wedge 2\gcd(m,k)\nmid k \wedge \eta(a^{2^{l-1}}) = 1$, then
    $$\nu_m(V) = \frac{1}{16}\gcd(m,k)\left(3\gcd(m,2^l) + 2\gcd(m,2^{l-1})\right).$$
    \item If $2\mid m \wedge 2\gcd(m,k)\nmid k \wedge \eta(a^{2^{l-1}}) = -1$, then
    $$\nu_m(V) = \frac{1}{16}\gcd(m,k)\left(5\gcd(m,2^l) - 2\gcd(m,2^{l-1})\right).$$
  \end{enumerate}
In particular, $\nu_2(V) = 1$.
\end{thm}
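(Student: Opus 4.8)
The plan is to run, for a center $a^r u^i v$ with both $r$ and $i$ even, exactly the argument already used for the other Type III class in Theorem~\ref{oddtype3}, and to read off the answer from a count of $G_m(a^r u^i v)$ sorted by the value of the $m$-th power. By Proposition~\ref{centralizers}.iii the centralizer $C_G(a^r u^i v)$ has order $8$, and by Lemma~\ref{type3lem} every $g\in G_m(a^r u^i v)$ satisfies $g^m\in\cyc{a^{2^{l-1}}}\subseteq Z(G)$; since $\eta$ is linear, $a^{2^{l-1}}$ acts on $V$ through the scalar $\eta(a^{2^{l-1}})\in\{1,-1\}$. This is precisely the situation in which the units arising from conjugating the class representative may be suppressed (the point flagged after Theorem~\ref{oddtype1}), so the first equality of Corollary~\ref{cor34} collapses to
$$\nu_m(V)=\frac{1}{8}\sum_{g\in G_m(a^r u^i v)}\eta(g^m)=\frac{1}{8}\bigl(|A_0|+\eta(a^{2^{l-1}})\,|A_1|\bigr),$$
where $A_0=\{g\in G_m(a^r u^i v):g^m=1\}$ and $A_1=\{g\in G_m(a^r u^i v):g^m=a^{2^{l-1}}\}$. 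Thus the whole computation reduces to enumerating $G_m(a^r u^i v)$ and deciding how it splits between $A_0$ and $A_1$.

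For the enumeration I would split $G$ into the elements $a^x u^y$ and the elements $a^x u^y v$, and then by the parity of $y$; the relevant membership criteria are Proposition~\ref{gmsets}(iv),(v) for the $v$-free elements and (viii),(ix) for the $v$-containing ones (these being the parts that govern $G_m$ of a Type III element with even $u$-exponent). Because $2\mid r$ and $2\mid m$ force $4\mid mr$, these criteria simplify to: $a^x u^y\in G_m(a^r u^i v)$ iff $k\mid my$ and $2^l\mid mx$; and $a^x u^y v\in G_m(a^r u^i v)$ iff $k\mid m(i-y)$ together with $mr\equiv(1-2^{l-2})mx\bmod 2^l$ for $y$ even, or $2^l\mid m(r-x)$ for $y$ odd. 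If $2\nmid m$ none of these can hold, so $G_m(a^r u^i v)=\emptyset$ and $\nu_m(V)=0$, which is (i). For $2\mid m$ the number of admissible even $u$-exponents is $\gcd(m,k/2)$, the number of admissible odd ones is $\tfrac12\gcd(m,k)$ when $k$ divides some odd multiple of $m$ and is $0$ otherwise, and the admissible $a$-exponents form a coset of size $\gcd(m,2^l)$. The clause ``$k$ divides an odd multiple of $m$'' is exactly $v_2(m)\ge v_2(k)$, i.e.\ $2\gcd(m,k)\nmid k$; its negation is the hypothesis of case (ii).

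To determine $A_0$ and $A_1$: once the membership condition $2^l\mid mx$ is imposed, $(a^x u^y)^m=1$ (the $a$-part dies and $u^{my}=1$), so only the $v$-containing elements can feed $A_1$, and by Lemma~\ref{gmsetlem} with Corollary~\ref{basicident}.vii the value of $(a^x u^y v)^m$ is controlled by the parities of $x$ and $y$ and by $v_2(m)$. When $2\gcd(m,k)\mid k$ (so $v_2(m)<v_2(k)$) the odd-$u$-exponent families are empty, the surviving elements all have trivial $m$-th power, and a direct count gives $|A_0|=2\gcd(m,k)\gcd(m,2^l)$ with $A_1=\emptyset$; hence $\nu_m(V)=\tfrac14\gcd(m,k)\gcd(m,2^l)$, which is (ii), and since $2\gcd(2,k)=4\mid k$ this specializes to $\nu_2(V)=1$. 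When $2\gcd(m,k)\nmid k$ (so $v_2(m)\ge v_2(k)\ge2$) the odd-$u$-exponent families survive and the $a$-exponent counts must be refined by parity; determining which of these refined counts land in $A_1$ and which in $A_0$, and folding the parity-indexed $\gcd$-counts over the four families back into a single expression, is what produces the closed forms in (iii) and (iv), the $\gcd(m,2^{l-1})$-terms being the pieces of the $a$-exponent count that peel off when $2^l\mid m$.

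The hard part is exactly this last step: in the regime $2\gcd(m,k)\nmid k$ one has to pin down, family by family and parity by parity, precisely the subset $A_1$ of $G_m(a^r u^i v)$ whose $m$-th power equals $a^{2^{l-1}}$, and then reassemble all the parity-indexed $\gcd$-counts into $3\gcd(m,2^l)+2\gcd(m,2^{l-1})$ and $5\gcd(m,2^l)-2\gcd(m,2^{l-1})$. The underlying arithmetic is elementary $2$-adic bookkeeping, but it is voluminous; as in the Type II computation the cleanest exposition treats the $v$-free and $v$-containing contributions separately and parallels the proof of Theorem~\ref{oddtype3} step for step.
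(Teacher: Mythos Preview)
Your overall framework is correct and matches the paper: collapse Corollary~\ref{cor34} to $\nu_m(V)=\tfrac{1}{8}\bigl(|A_0|+\eta(a^{2^{l-1}})|A_1|\bigr)$ via Lemma~\ref{type3lem}, then count. But you have the source of the $A_1$-contribution exactly backwards, and this stems from a typo in the statement of Proposition~\ref{gmsets}(v).

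The stated condition there reads $2^l\mid ms$, but the proof of that proposition (and the paper's own use of it in the proof of the present theorem) gives the weaker $2^{l-1}\mid ms$. So your simplified criterion ``$a^x u^y\in G_m(a^r u^iv)$ iff $k\mid my$ and $2^l\mid mx$'' is wrong for odd $y$: the correct condition is $2^{l-1}\mid mx$. With the right condition, $(a^x u^{2y-1})^m=a^{mx}$ equals $a^{2^{l-1}}$ precisely when $2^{l-1}\mid mx$ but $2^l\nmid mx$; there are $2\gcd(m,2^{l-1})-\gcd(m,2^l)$ such $x$, and this is exactly where the $\gcd(m,2^{l-1})$-term in (iii)--(iv) comes from.

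Conversely, the $v$-containing elements contribute \emph{nothing} to $A_1$ here. For odd $u$-exponent this is immediate from Corollary~\ref{basicident}(vii). For even $u$-exponent the only way to get $(a^x u^{2y}v)^m=a^{2^{l-1}}$ is $m\equiv 2\pmod 4$ with $x$ odd; but then $r-(1-2^{l-2})x$ is odd (since $r$ is even), so the condition $2^l\mid m(r-(1-2^{l-2})x)$ from Proposition~\ref{gmsets}(viii) fails. Hence the $v$-containing half gives the clean $\tfrac{1}{8}\gcd(m,k)\gcd(m,2^l)$ exactly as in Theorem~\ref{oddtype3}, and the $\eta(a^{2^{l-1}})$-dependence lives entirely in the $v$-free, odd-$u$-exponent family. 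Once you swap the roles of the two families your sketch goes through as written.
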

\begin{proof}
  Set $g=a^r u^i v$.  By Proposition \ref{gmsets}, we have
  \begin{eqnarray}\label{eet3gm1}
    a^x u^{2y} \in G_m(g) &\iff& 2\mid m \wedge k\mid 2my \wedge 2^l\mid mx\\
    \label{eet3gm2}a^x u^{2y-1}\in G_m(g)&\iff& 4\mid m \wedge k\mid m(2y-1) \wedge 2^{l-1}\mid mx\\
    \label{eet3gm3}a^x u^{2y} v\in G_m(g) &\iff& 2\mid m \wedge k\mid m(2y-i) \wedge 2^l \mid m(r-(1-2^{l-2})x)\\
    \label{eet3gm4}a^x u^{2y-1} v \in G_m(g) &\iff& 4\mid m \wedge k\mid m(2y-1-i) \wedge 2^l\mid m(r-x)
  \end{eqnarray}
  So it follows that if $2\nmid m$ then $\nu_m(V)=0$.  So for the remainder of the proof we assume that $2\mid m$.

  We begin by determining the contribution from elements of the form $a^x u^y$.  For any $a^x u^y \in G_m(a^r u^i v)$ we have
  $$(a^x u^y)^m = a^{\frac{m}{2}(n_1^y+1)x} u^{my} = \left\{ \begin{array}{cll}
    1 &;& 2\mid y \vee 2^l \mid mx\\
    a^{2^{l-1}} &;& 2\nmid y \wedge 2^l\nmid mx \end{array}\right..$$
  Since for any such $y$ we must have $k\mid my$, it follows that there are $\gcd(m,k)$ possible values for $y$.  If $k/\gcd(m,k)$ is even, then all such values are even, and so $(a^x u^y)^m=1$.  On the other hand, if $k/\gcd(m,k)$ is odd, then half of the valid values for $y$ are even and the other half are odd.  Note that $k/\gcd(m,k)$ odd forces $4\mid m$.  Supposing that $y$ is odd, we readily find that there are $2\gcd(m,2^{l-1})$ possible values for $x$, and that $\gcd(m,2^l)$ of them have $2^l\mid mx$.  For convenience, let $P$ be the proposition $2\mid m \wedge 2\gcd(m,k)\mid k$ and let $Q$ be the proposition $4\mid m \wedge 2\gcd(m,k)\nmid k$.  All combined, we conclude that elements of the form $a^x u^y$ contribute
  \begin{eqnarray*}
    \left\{ \begin{array}{cll}
        \frac{1}{8}\gcd(m,k)\gcd(m,2^l) &;& P\\
        \frac{1}{16}\gcd(m,k)\left( 2\gcd(m,2^l) + \eta(a^{2^{l-1}})(2\gcd(m,2^{l-1})-\gcd(m,2^l))\right) &;& Q\\
        0&;& 2\nmid m
    \end{array}\right.
  \end{eqnarray*}
  to $\nu_m(V)$.

  For the contribution from elements of the form $a^x u^y v$, we give an argument similar to that in the proof of Theorem \ref{oddtype3} to conclude that we once again get a contribution equal to (\ref{ot3part2}).  The desired formula follows, and $\nu_2(V)=1$ is then immediate.
\end{proof}

We now have a single case of modules left to consider to complete our goal.

\begin{thm}\label{oddeventype3}
  Let $G=\G$ be as in Definition \ref{presentation}.  Let $\eta$ be any irreducible linear character of $C_G(a^r u^{i} v)$, and suppose $2\mid i$ and $2\nmid r$.  Consider the $2^{l-2}k$-dimensional irreducible $\D(G)$-module $V=M(\class(a^r u^{i}v),\eta)$.  Then
  \begin{enumerate}
    \item If $2\nmid m$, then $\nu_m(V) = 0$.
    \item If $2\mid m \wedge 4\nmid m$, then
    $$\nu_m(V) = \frac{1}{2}\gcd(m,k) \eta(a^{2^{l-1}}).$$
    \item If $4\mid m \wedge 2\gcd(m,k)\mid k$, then
    $$\nu_m(V) = \frac{1}{4}\gcd(m,k)\gcd(m,2^l).$$
    \item If $4\mid m \wedge 2\gcd(m,k)\nmid k \wedge \eta(a^{2^{l-1}}) = 1$, then
    $$\nu_m(V) = \frac{1}{16}\gcd(m,k)\left( 3\gcd(m,2^l) + 2 \gcd(m,2^{l-1}) \right).$$
    \item If $4\mid m \wedge 2\gcd(m,k)\nmid k \wedge \eta(a^{2^{l-1}}) = -1$, then
    $$\nu_m(V) = \frac{1}{16}\gcd(m,k)\left( 5\gcd(m,2^l) - 2 \gcd(m,2^{l-1}) \right).$$
  \end{enumerate}
  In particular, $\nu_2(V) = \eta(a^{2^{l-1}})=\pm 1$.
\end{thm}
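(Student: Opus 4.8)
The plan is to imitate the proofs of Theorems~\ref{oddtype3} and~\ref{eveneventype3}, feeding $s=a^r u^i v$ into the first formula of Corollary~\ref{cor34}. By Proposition~\ref{centralizers}.iii we have $|C_G(a^r u^i v)|=8$, while Lemma~\ref{type3lem} says that every $a\in G_m(a^r u^i v)$ satisfies $a^m\in\cyc{a^{2^{l-1}}}=\{1,a^{2^{l-1}}\}\subseteq Z(G)\subseteq C_G(a^r u^i v)$. Hence the sum over $C_G(a^r u^i v)$ collapses and
$$\nu_m(V)=\frac{1}{8}\sum_{a\in G_m(a^r u^i v)}\eta(a^m)=\frac{1}{8}\Bigl(A_0+A_1\,\eta\bigl(a^{2^{l-1}}\bigr)\Bigr),$$
where $A_0$ and $A_1$ count the $a\in G_m(a^r u^i v)$ with $a^m=1$, respectively $a^m=a^{2^{l-1}}$, so that $A_0+A_1=|G_m(a^r u^i v)|$. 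Since $\eta$ is a linear character of $C_G(a^r u^i v)\cong\BZ_4\times\BZ_2$ (using $2\mid i$, $2\nmid r$ in Proposition~\ref{centralizers}.iii) and $a^{2^{l-1}}=(a^r u^i v)^2$ has order $2$ by Corollary~\ref{basicident}.vii, we get $\eta(a^{2^{l-1}})=\pm1$; this is what will produce the final assertion once $m=2$ is treated. If $2\nmid m$ then no element $a^x u^y$ or $a^x u^y v$ can lie in $G_m(a^r u^i v)$, since the $v$-component of the defining product is $v^m\neq1$, and so $\nu_m(V)=0$, which is~(i).

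Assume $2\mid m$. I would split $G$ into the elements $a^x u^y$ and $a^x u^y v$. Because the $u$-exponent $i$ of $g$ is even, membership of $a^x u^y$ in $G_m(g)$ is controlled by parts iv and v of Proposition~\ref{gmsets} (according as $y$ is even or odd), and that of $a^x u^y v$ by parts viii and ix; throughout, $2\nmid r$ lets us replace ``$4\mid mr$'' by ``$4\mid m$''. Simultaneously one computes $m$-th powers: from $uau^{-1}=a^{n_1}$ one has $(a^x u^y)^m=a^{x\sum_{t=0}^{m-1}n_1^{ty}}u^{my}$, and Lemma~\ref{gmsetlem} gives $(a^x u^y v)^m=a^{(m/2)(1+n_1^y n_2)x}$. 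For an element lying in $G_m(g)$ the $u$-part of its $m$-th power is trivial --- automatically for the $a^x u^y v$ family, and because $k\mid my$ for the $a^x u^y$ family --- and by Corollary~\ref{basicident} the remaining power of $a$ lands in $\cyc{a^{2^{l-1}}}$, equal to $1$ when $2^l\mid mx$ and to $a^{2^{l-1}}$ when $mx\equiv2^{l-1}\bmod2^l$. Counting the solutions of the governing congruences --- $x$ modulo $2^l$, the $u$-exponent modulo $k$ --- produces the factors $\gcd(m,2^l)$ and $\gcd(m,2^{l-1})$, being the numbers of admissible $x$ with $2^l\mid mx$ and with $2^{l-1}\mid mx$ respectively, alongside $\gcd(m,k)$.

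The remaining case analysis turns on whether $2\gcd(m,k)\mid k$, i.e.\ on whether $k/\gcd(m,k)$ is even: this is precisely the condition that every admissible $u$-exponent be even (so only parts iv and viii contribute) rather than half of them being odd (so parts v and ix enter too). When $2\mid m$ but $4\nmid m$, the $2$-part of $\gcd(m,k)$ is $2$ (as $4\mid k$), so $2\gcd(m,k)\mid k$ holds automatically; a short count then yields $A_0=0$ and $A_1=4\gcd(m,k)$, hence $\nu_m(V)=\tfrac12\gcd(m,k)\,\eta(a^{2^{l-1}})$, which is~(ii), and setting $m=2$ (so $\gcd(2,k)=2$) gives $\nu_2(V)=\eta(a^{2^{l-1}})=\pm1$. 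When $4\mid m$ and $2\gcd(m,k)\mid k$ the same mechanism forces $a^m=1$ on all of $G_m(g)$, so $A_1=0$ and $|G_m(g)|=2\gcd(m,k)\gcd(m,2^l)$, giving~(iii). The laborious case is $4\mid m$ with $2\gcd(m,k)\nmid k$: now all four families $a^x u^{2j}$, $a^x u^{2j-1}$, $a^x u^{2j}v$, $a^x u^{2j-1}v$ contribute, the admissible $x$ in each splitting into those with $2^l\mid mx$ (giving $a^m=1$) and those with $mx\equiv2^{l-1}\bmod2^l$ (giving $a^m=a^{2^{l-1}}$), and one must total $A_0$ and $A_1$ and verify that $\tfrac18(A_0+A_1\,\eta(a^{2^{l-1}}))$ collapses to $\tfrac1{16}\gcd(m,k)\bigl(3\gcd(m,2^l)+2\gcd(m,2^{l-1})\bigr)$ when $\eta(a^{2^{l-1}})=1$, and to $\tfrac1{16}\gcd(m,k)\bigl(5\gcd(m,2^l)-2\gcd(m,2^{l-1})\bigr)$ when $\eta(a^{2^{l-1}})=-1$. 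Keeping the parities of the $u$-exponents, the $2$-power dividing $mx$, and the sign $\eta(a^{2^{l-1}})$ aligned through this last count is the main obstacle; the rest is mechanical given Proposition~\ref{gmsets}.
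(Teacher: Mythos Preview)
Your proposal is correct and follows essentially the same route as the paper: both split $G_m(a^r u^i v)$ into the $a^x u^y$ and $a^x u^y v$ families, invoke parts iv, v, viii, ix of Proposition~\ref{gmsets} (using $2\mid i$), compute $m$-th powers via Lemma~\ref{gmsetlem} and Corollary~\ref{basicident}, and then count solutions to the resulting congruences case by case on $4\mid m$ and $2\gcd(m,k)\mid k$. The only cosmetic difference is that you invoke the first formula of Corollary~\ref{cor34} and organize the count by the value $A_0,A_1$ of $\eta(a^m)$, whereas the paper uses the third formula (reduced via Lemma~\ref{type3lem}) and presents the $a^x u^y$ and $a^x u^y v$ contributions as separate piecewise expressions before summing; these are equivalent bookkeeping choices.
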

\begin{proof}
  Set $g=a^r u^i v$.  By Proposition \ref{gmsets}, we have
  \begin{eqnarray*}
    a^x u^{2y} \in G_m(g) &\iff& 2\mid m \wedge k\mid 2my \wedge \ \\
     && \ \left((4\mid m \wedge 2^l \mid mx) \vee (4\nmid m \wedge x\equiv 2^{l-2}\bmod 2^{l-1})\right)\\
    a^x u^{2y-1}\in G_m(g)&\iff& 4\mid m \wedge k\mid m(2y-1) \wedge 2^{l-1}\mid mx\\
    a^x u^{2y} v\in G_m(g) &\iff& 2\mid m \wedge k\mid m(2y-i) \wedge 2^l \mid m(r-(1-2^{l-2})x)\\
    a^x u^{2y-1} v \in G_m(g) &\iff& 4\mid m \wedge k\mid m(2y-1-i) \wedge 2^l\mid m(r-x)
  \end{eqnarray*}
  So it follows that if $2\nmid m$ then $\nu_m(V)=0$.  So for the remainder of the proof we assume that $2\mid m$.

  We begin by determining the contribution from elements of the form $a^x u^y$.  For any $a^x u^y \in G_m(a^r u^i v)$ we have
  \begin{eqnarray*}
    (a^x u^y)^m &=& \left\{ \begin{array}{cll}
        a^{mx} &;& 2\mid y\\
        a^{(2^{l-2}+1)mx} &;& 2\nmid y \end{array}\right..
  \end{eqnarray*}
  If $2\mid y$, then
  \begin{eqnarray*}
    (a^x u^y)^m &=& \left\{ \begin{array}{cll}
        1 &;& 4\mid m \wedge 2^l\mid mx\\
        a^{2^{l-1}} &;& 4\nmid m \wedge x\equiv 2^{l-2}\bmod 2^{l-1}\end{array}\right..
  \end{eqnarray*}
  There are $\gcd(m,2^l)$ $x$ satisfying the first part, and exactly $2$ values satisfying the second.  On the other hand, if $2\nmid y$, then
  \begin{eqnarray*}
    (a^x u^y)^m &=& \left\{ \begin{array}{cll}
        1 &;& 4\mid m \wedge 2^l\mid mx\\
        a^{2^{l-1}} &;& 4\mid m \wedge 2^{l-1}\mid mx \wedge 2^l\nmid mx\end{array}\right..
  \end{eqnarray*}
  There are $\gcd(m,2^l)$ values for $x$ satisfying the first part, and \newline $2\gcd(m,2^{l-1}) - \gcd(m,2^l)$ satisfying the second.  Any valid value for $y$ mlust satisfy $my\equiv mi\bmod k$, and there is a total $\gcd(m,k)$ such (distinct) values.  Furthermore, when $k/\gcd(m,k)$ is even then every valid value for $y$ is even, and when $k/\gcd(m,k)$ is odd then half of them are odd and half are even.  Let $P$ be the proposition $2\gcd(m,k)\mid k$.  Considering all cases, the contribution to $\nu_m(V)$ from elements of the form $a^x u^y$ is given by
  \renewcommand\arraystretch{2.25}
  $$\left\{\begin{array}{cll}
    \ds{\frac{1}{4}\gcd(m,k)\eta(a^{2^{l-1}})} &;& 4\nmid m\\
    \ds{\frac{1}{8} \gcd(m,k)\gcd(m,2^l)} &;& 4\mid m \wedge P\\
    \ds{\frac{1}{16} \gcd(m,k) \left( 2\gcd(m,2^l) + \eta(a^{2^{l-1}})( 2\gcd(m,2^{l-1}) - \gcd(m,2^l)) \right)} &;& 4\mid m \wedge \neg P.
  \end{array}\right.$$

  For the contribution from $a^x u^y v$ we proceed in a similar fashion to the proof of Theorem \ref{oddtype3} and get
  \renewcommand\arraystretch{2.25}
  $$\left\{ \begin{array}{cll}
    \ds{\frac{1}{8} \gcd(m,k)\gcd(m,2^l)} &;& 4\mid m\\
    \ds{\frac{1}{4} \gcd(m,k) \eta(a^{2^{l-1}})} &;& 2\mid m \wedge 4\nmid m\\
    0 &;& 2\nmid m
    \end{array}\right..$$

    Combining things together, we get the desired formula for $\nu_m(V)$.  That $\nu_2(V) = \eta(a^{2^{l-1}})$ then follows.
\end{proof}

We thus get our main theorem as an easy corollary.
\begin{thm}\label{doublesum}
There exist infinitely many finite groups $G$ having all of the following properties
\begin{enumerate}
  \item $G$ is completely real.
  \item $G$ is generated by involutions.
  \item $\forall g\in G$ $C_G(g)$ is an $M$-group.  In particular, $G$ is an $M$-group.
  \item $G$ is totally orthogonal.
  \item $\nu_m(V)\geq 0$ for every $G$-module $V$.
  \item Every (irreducible) $\D(G)$-module is self-dual.
  \item There exists an irreducible $\D(G)$-module $W$ such that $\nu_2(W)=-1$.
\end{enumerate}
\end{thm}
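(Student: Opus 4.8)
\emph{Proof proposal.} The plan is to show that \emph{every} group $G=\G$ of Definition \ref{presentation} already has all seven properties, and that these groups fall into infinitely many isomorphism classes; the theorem is then immediate. For the count, observe that $|G|=2^{l}\cdot|D_k|=2^{l+1}k$, so fixing $l\geq 3$ and letting $k$ range over the positive multiples of $4$ yields groups of infinitely many distinct orders, hence infinitely many non-isomorphic groups. Thus it suffices to verify (i)--(vii) for one fixed $G=\G$. Properties (i)--(v) are literally parts (i)--(v) of Theorem \ref{groupsummary}, so nothing further is needed for those.

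For (vi), I would invoke the standard fact (see \cite{LM}) that over the semisimple Hopf algebra $\D(G)$ a simple module is self-dual precisely when its second indicator is nonzero; since $\D(G)$ is semisimple, self-duality of all simple modules forces self-duality of every module, so it suffices to show $\nu_2(V)\neq 0$ for every irreducible $\D(G)$-module $V$. By Proposition \ref{MOrho}, $V$ is either $M(x,\rho)$ with $x\in Z(G)$, or is of Type I, II, or III in the sense of Definition \ref{typedef}. In the central case $x^2=1$ since $Z(G)\cong\BZ_2\times\BZ_2$ by Lemma \ref{center}, so Lemma \ref{FxF} gives $\nu_2(V)=\nu_2(\rho)$, where $\rho$ is an irreducible $G$-module; and $\nu_2$ equals $1$ on every irreducible $G$-module by the ``in particular'' clauses of Theorems \ref{1dimindic}, \ref{2dimindic}, \ref{4dimindic}, together with the classification of Theorem \ref{1dimreps}. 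In the remaining cases the corresponding clauses of Theorems \ref{oddtype1}, \ref{even1dimtype1}, \ref{even2dimtype1}, \ref{type2}, \ref{oddtype3}, \ref{eveneventype3}, and \ref{oddeventype3} show $\nu_2(V)\in\{1,-1\}$. In every case $\nu_2(V)\neq 0$, which gives (vi).

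Finally, for (vii) I would exhibit $W$ explicitly among the Type III modules covered by Theorem \ref{oddeventype3}. Take $g=av$, i.e. $r=1$ and $i=0$, so that $2\nmid r$ and $2\mid i$; by Corollary \ref{basicident}.vii the element $g$ has order $4$ with $g^2=a^{2^{l-1}}$, and by Proposition \ref{centralizers}.iii the centralizer $C_G(g)=\cyc{a^{2^{l-1}},u^{k/2},av}$ is isomorphic to $\BZ_4\times\BZ_2$ with $g=av$ generating the $\BZ_4$ factor. Hence there is a (one-dimensional) irreducible character $\eta$ of $C_G(g)$ with $\eta(g)$ a primitive fourth root of unity, so that $\eta(a^{2^{l-1}})=\eta(g)^2=-1$. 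Setting $W=M(\class(g),\eta)$, Theorem \ref{oddeventype3} yields $\nu_2(W)=\eta(a^{2^{l-1}})=-1$. All the real work lies in Sections \ref{groupindsect}--\ref{type3sect}; once those indicator formulas are available the theorem is pure bookkeeping, and the only step that requires any thought is this last one, which reduces to the elementary observation that the abelian centralizer $C_G(av)$ admits a character nontrivial on the central involution $a^{2^{l-1}}$ -- true because that involution is the square of the order-$4$ element $av$ lying inside it.
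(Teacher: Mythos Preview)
Your proof is correct and follows essentially the same route as the paper's: invoke Theorem \ref{groupsummary} for (i)--(v), combine Lemma \ref{FxF} and Lemma \ref{center} with the ``in particular'' clauses of the indicator theorems for (vi), and use Theorem \ref{oddeventype3} together with the abelian centralizer in Proposition \ref{centralizers}.iii to produce a character with $\eta(a^{2^{l-1}})=-1$ for (vii). Your version is more explicit than the paper's (you supply the order count for infinitude and pin down the concrete element $g=av$), but the underlying argument is identical.
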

\begin{proof}
  We can take any group $\G$ from Definition \ref{presentation}.  Parts i-v) are the content of Theorem \ref{groupsummary}.  Part vi) follows from: Lemma \ref{FxF} applied to Theorem \ref{groupsummary} and Corollary \ref{center}; and the Theorems of Sections \ref{type1sect}-\ref{type3sect}.

  Now $C_G(a^r u^i v)$ is abelian by Proposition \ref{centralizers}.  Therefore, we can always arrange for one of its characters $\eta$ to have $\eta(a^{2^{l-1}}) = -1$.  Thus part vii) holds by Theorem \ref{oddeventype3}.
\end{proof}
In \cite{GM} it was asked if the indicators for $\D(G)$ are non-negative whenever $G$ is totally orthogonal. We can now assert that this is not the case, even if we restrict ourselves to just the classical indicators $\nu_2(V)$.  We remark again that D. Naidu has independently shown that $\BZ_8\rtimes D_4$, in the notation of Definition \ref{presentation}, was a counterexample.  It has been conjectured that non-negativity of the indicators for $\D(G)$ holds provided we further require that $G$ be a real reflection group.  Such groups are well-known to satisfy conditions i), ii), and iv) of our theorem (the fifth part remains unproven, except for the dihedral \cite{K} and symmetric \cite{Sch} groups), and in \cite{GM} it was shown that $\D(G)$ is also totally orthogonal.  Theorem \ref{doublesum} shows that the condition of being generated by reflections in the conjecture cannot be relaxed to simply "generated by involutions".  Furthermore, by parts iv), vi), and vii) we can now positively answer the existence part of Question 4 in \cite{K}.

\bibliographystyle{plain}
\bibliography{references}

\begin{thebibliography}{10}

\bibitem{AF}
N.~Andruskiewitsch and F.~Fantino.
\newblock New techniques for pointed {H}opf algebras.
\newblock {\em Contemp. Math.}, 491:323--348, 2009.

\bibitem{B1}
P.~Bantay.
\newblock The {F}robenius-{S}chur indicator in conformal field theory.
\newblock {\em Physics Lett. B}, 394(1-2):87--88, 1997.

\bibitem{B2}
P.~Bantay.
\newblock Frobenius-{S}chur indicators, the {K}lein-bottle amplitude, and the
  principle of orbifold covariance.
\newblock {\em Phys. Lett. B}, 488:207--210, 2000.

\bibitem{Co}
R.~Courter.
\newblock {\em Computing {H}igher {I}ndicators for the {D}ouble of a
  {S}ymmetric {G}roup}.
\newblock PhD thesis, University of Southern California, 2012.

\bibitem{DPR}
R.~Dijkgraaf, V.~Pasquier, and P.~Roche.
\newblock Quasi{H}opf algebras, group cohomology and orbifold models.
\newblock {\em Nucl. Phys. B Proc. Suppl.}, 18B:60--72, 1990.

\bibitem{FGSV}
J.~Fuchs, A.~Ch. Ganchev, K.~Szlach\'{a}nyi, and P.~Vecserny\'{e}s.
\newblock \ensuremath{S_4} symmetry of \ensuremath{6j} symbols and
  {F}robenius-{S}chur indicators in rigid monoidal \ensuremath{C^{\ast}}
  categories.
\newblock {\em J. Math. Phys.}, 40(1):408--426, 1999.

\bibitem{GM}
R.~Guralnick and S.~Montgomery.
\newblock Frobenius-{S}chur indicators for subgroups and the {D}rinfel'd double
  of {W}eyl groups.
\newblock {\em Trans. Amer. Math. Soc.}, 361(7):3611--3632, 2009.

\bibitem{IMM}
M.~Iovanov, G.~Mason, and S.~Montgomery.
\newblock Frobenius-{S}schur {I}ndicators of {D}oubles of {G}roups.
\newblock in preparation.

\bibitem{I}
I.~Martin Isaacs.
\newblock {\em Character Theory of Finite Groups}.
\newblock Dover Publications, Inc., Mineola, NY, 1994.

\bibitem{Ka}
Y.~Kashina.
\newblock On semisimple {H}opf algebras of dimension \ensuremath{2^m}.
\newblock {\em Algebras and Representation Theory}, 6(4):393--425, 2003.

\bibitem{KSZ1}
Y.~Kashina, Y.~Sommerh\"auser, and Y.~Zhu.
\newblock Self-dual modules of semisimple {H}opf algebras.
\newblock {\em Journal of Algebra}, 257:88--96, 2002.

\bibitem{KSZ2}
Y.~Kashina, Y.~Sommerh\"auser, and Y.~Zhu.
\newblock On higher {F}robenius-{S}chur indicators.
\newblock {\em AMS Memoirs}, 181(855), 2006.

\bibitem{K}
M.~Keilberg.
\newblock Higher {I}ndicators for some {G}roups and their {D}oubles.
\newblock {\em Journal of Algebra and Its Applications}, 12(2), 2012.

\bibitem{LM}
V.~Linchenko and S.~Montgomery.
\newblock A {F}robenius-{S}chur theorem for {H}opf algebras.
\newblock {\em Algebras and Reprsentation Theory}, 3:347--355, 2000.

\bibitem{Ma1}
G.~Mason.
\newblock The quantum double of a finite group and its role in conformal field
  theory.
\newblock {\em Groups '93 Galway/St. Andrews}, 2:405--417, 1995.

\bibitem{MaN}
G.~Mason and S-H. Ng.
\newblock Central invariants and {F}robenius-{S}chur indicators for semisimple
  quasi-{H}opf algebras.
\newblock {\em Advances in Math}, 190:161--195, 2005.

\bibitem{Mo}
S.~Montgomery.
\newblock {\em Hopf {A}lgebras and their {A}ctions on {R}ings}, volume~82 of
  {\em CBMS Lectures}.
\newblock AMS, Providence, RI, 1993.

\bibitem{N1}
S.~Natale.
\newblock On group-theoretical {H}opf algebras and exact factorizations of
  finite groups.
\newblock {\em Journal of Algebra}, 270:199--211, 2003.

\bibitem{N2}
S.~Natale.
\newblock {F}robenius-{S}chur indicators for a class of fusion categories.
\newblock {\em Pacific J. Math}, 221:363--377, 2005.

\bibitem{NS2}
S-H. Ng and P.~Schauenburg.
\newblock Frobenius-{S}chur indicators and exponents of spherical categories.
\newblock {\em Advances in Math}, 211(1):34--71, 2007.

\bibitem{NS3}
S-H. Ng and P.~Schauenburg.
\newblock Higher {F}robenius-{S}chur indicators for pivotal categories.
\newblock {\em Contemp. Math}, 441:63--90, 2007.

\bibitem{NS1}
S-H. Ng and P.~Schauenburg.
\newblock Central invariants and higher indicators for semisimple
  {Q}uasi-{H}opf algebras.
\newblock {\em Trans. Amer. Math. Soc.}, 360:1839--1860, 2008.

\bibitem{Sch}
T.~Scharf.
\newblock Die {W}urzelanzahlfunktion in symmetrischen {G}ruppen.
\newblock {\em Journal of Algebra}, 139:446--457, 1991.

\end{thebibliography}

\end{document}